\theoremstyle{plain}
\newtheorem{theorem}{Theorem}[section]
\newtheorem{corollary}[theorem]{Corollary}
\theoremstyle{definition}
\newtheorem{definitions}[theorem]{Definitions}
\newtheorem{example}[theorem]{Example}
\newtheorem{examples}[theorem]{Examples}
\newtheorem{notation}[theorem]{Notation}
\newtheorem{remark}[theorem]{Remark}
\newtheorem{consequence}[theorem]{Consequence}
\newcommand{\gp}[2]{\gen{{#1}\mid #2}}
\newcommand{\gen}[1]{\langle\mkern3mu#1\mkern3mu\rangle}
\def \bbl1 {[\mkern-3mu[}
\def \bbr1 {]\mkern-3mu]}
\def \reals {\mathbb{R}}
\def \integers {\mathbb{Z}}
\def\d1{\discretionary{-}{}{-}}
\def\coloneq{\mathrel{\mathop\mathchar"303A}\mkern-1.2mu=}
\DeclareMathOperator{\Out}{Out}
\DeclareMathOperator{\Aut}{Aut}
\DeclareMathOperator{\GL}{GL}
\DeclareMathOperator{\Stab}{Stab}
\def\M{}
\DeclareMathOperator{\N}{Nielsen}
\DeclareMathOperator{\iN}{\mathit{Nielsen}}
\renewcommand{\phi}{\varphi}
\renewcommand{\epsilon}{\varepsilon}
\renewcommand{\le}{\leqslant}
\renewcommand{\ge}{\geqslant}
\begin{document}

\title[The Zieschang-M{\tiny c}Cool method]
{The Zieschang-M{\tiny c}Cool method for \\generating algebraic mapping-class groups}

\author{Llu\'{\i}s Bacardit and  Warren Dicks}

\date{\small\today}

\begin{abstract} Let $g, p \in [0{\uparrow}\infty\,[$, the set of non-negative integers.
Let $\operatorname{\normalfont A}_{g,p}$ denote
the group consisting of all those automorphisms of the free group on\linebreak
$ t_{[1 \uparrow p ]}  \cup x_{[1 \uparrow g ]} \cup  y_{[1 \uparrow g ]}$
which fix the element
\mbox{$\textstyle  \operatornamewithlimits{\Pi}\limits_{j \in [p\downarrow 1]} t_j \hskip-2pt
\operatornamewithlimits{\Pi}\limits_{i \in [1 \uparrow g]} [x_i,y_i] $}
and permute  the set
of  conjugacy classes \mbox{$\{\, [t_j] : j \in [1{\uparrow}p]\}$}.

Labru\`ere and Paris,  building on work of
\textbf{A}rtin,  Magnus, \textbf{D}ehn,  Nielsen,  \textbf{L}ickorish, Zieschang,  Birman,
\textbf{H}umphries, and others,
showed that $\operatorname{\normalfont A}_{g,p}$ is generated by what is called the   ADLH set.
We use  methods of   Zieschang and McCool  to
give a self\d1contained,  algebraic proof of this
result.

 Labru\`ere and Paris also gave  defining relations for  the ADLH set in $\operatorname{\normalfont A}_{g,p}$;
we do not know an algebraic proof of this for $g \ge 2$.

Consider an orientable surface $\mathbf{S}_{g,p}$
of genus $g$ with $p$ punctures,  with $(g,p) \ne (0,0)$, $(0,1)$.  The
\textit{algebraic}
 mapping-class group of $\mathbf{S}_{g,p}$, denoted  $\operatorname{\normalfont M}_{g,p}^{\text{alg}}$,  is defined as
the group of all those outer automorphisms of
$$ \gp{t_{[1 \uparrow p ]}  \cup x_{[1 \uparrow g ]} \cup  y_{[1 \uparrow g ]}}
{\textstyle \hskip-3pt \operatornamewithlimits{\Pi}\limits_{j \in [p\downarrow 1]} t_j \hskip-2pt
\operatornamewithlimits{\Pi}\limits_{i \in [1 \uparrow g]} [x_i,y_i]} \vspace{-1mm}$$
which permute  the set of conjugacy classes \mbox{$\{\, [t_j], [\overline t_j] : j \in [1{\uparrow}p]\}$}.
It now follows from a result of Nielsen that $\operatorname{\normalfont M}_{g,p}^{\text{alg}}$ is
generated by the image of the ADLH set together with a  reflection.
This gives a  new way of seeing
that    $\operatorname{\normalfont M}_{g,p}^{\text{alg}}$ equals  the  (topological)   mapping-class group  of
$\mathbf{S}_{g,p}$, along  lines
 suggested by  Magnus, Karrass, and Solitar  in 1966.
\medskip

{\footnotesize
\noindent \emph{2010 Mathematics Subject Classification.} Primary:  20E05;
Secondary: 20E36, 20F05, 57M60, 57M05.

\noindent \emph{Key words.}  Algebraic mapping-class group.  Zieschang groupoid.  Generating set.}

\end{abstract}

\maketitle

\section{Introduction}

Notation will be explained more fully in Section~\ref{sec:Z1}.

\begin{definitions}\label{defs:A}
Let  $g$, $p \in [0{\uparrow}\infty[$\,.
 Let $\operatorname{\normalfont A}_{g,p}$ denote the group of automorphisms of $ \gp{t_{[1 \uparrow  p ]}
 \cup x_{[1 \uparrow g ]} \cup  y_{[1 \uparrow g ]}}
{\quad} $
that fix  $\operatornamewithlimits {\Pi}\limits_{j \in [p\downarrow 1]} t_j \hskip-2pt
\operatornamewithlimits{\Pi}\limits_{i \in [1 \uparrow g]} [x_i,y_i] $ \vspace{-2mm} and
permute the set of  conjugacy classes \mbox{$\{\, [t_j] :  j \in [1{\uparrow}p]\}$}.

 We shall usually codify an element $\phi \in \operatorname{\normalfont A}_{g,p}$ as a two-row matrix where the
first row gives all the  elements  of $t_{[1 \uparrow  p ]}
 \cup x_{[1 \uparrow g ]} \cup  y_{[1 \uparrow g ]}$ that are moved by $\phi$, and the second row
equals the $\phi$-image of the first row.  We define the
 following elements of $\operatorname{\normalfont A}_{g,p}$: \vspace{-5pt}

\hskip  1cm for each \mbox{$j \in [2{\uparrow}p]$,}   \mbox{ $\sigma_j \coloneq \bigl(\begin{smallmatrix}
t_j  && t_{j-1} \\    t_{j-1}  &&  \overline t_{j-1}t_{j} {t_{j-1}}
\end{smallmatrix}\bigr);$}\vspace{.5mm}

\hskip  1cm  for each $i \in [1{\uparrow}g]$,
 \mbox{ $\alpha_i \coloneq  \bigl(\begin{smallmatrix}
x_i \\   \overline y_ix_i
\end{smallmatrix}\bigr)  $} and \mbox{$\beta_i \coloneq \left(\begin{smallmatrix}
y_i \\     x_iy_i
\end{smallmatrix}\right) $;}\vspace{.5mm}

\hskip 1cm for each $i \in [2{\uparrow}g]$,
\mbox{ $\gamma_i \coloneq \bigl(\begin{smallmatrix}
x_{i-1} & & y_{i-1} & & x_{i}\\   \overline w_i x_{i-1} & &  \overline w_i y_{i-1} {w_{i}} & & x_{i}w_{i}
\end{smallmatrix}\bigr)$} with \mbox{$w_i \coloneq y_{i-1}  \overline x_i \overline y_{i } {x_i}$;}

\hskip  1cm  if  $\min(1,g,p) = 1$,
 \mbox{ $\gamma_1 \coloneq \bigl(\begin{smallmatrix}
t_1 & & x_1  \\    \overline w_1 t_1 {w_1} & & x_{1}w_{1}
\end{smallmatrix}\bigr)$} with \mbox{$w_1 \coloneq  t_1 \overline x_1 \overline y_1 {x_1}$}.

\medskip

\noindent We say that
$
\sigma_{[2{\uparrow}p]} \cup \alpha_{[1{\uparrow}g]} \cup \beta_{[1{\uparrow}g]}
\cup \gamma_{[\max(2-p,1){\uparrow}g]}
$
 is the \textit{ADL  set}, and  that
 removing $\alpha_{[3{\uparrow}g]}$ leaves the  \textit{ADLH  set},
$ \sigma_{[2{\uparrow}p]} \cup \alpha_{[1{\uparrow}\min(2,g)]} \cup \beta_{[1{\uparrow}g]}
\cup  \gamma_{[\max(2-p,1){\uparrow}g]},$
named after
 Artin, Dehn, Lickorish and Humphries.
\hfill\qed
\end{definitions}

In~\cite[Proposition 2.10(ii) with $r=0$]{LP}, Labru\`ere and Paris  showed that
$\operatorname{\normalfont A}_{g,p}$ is generated by the ADLH set.  As we shall recall in Section~\ref{sec:mcg},
the   proof is built on work of
Artin,  Magnus, Dehn,  Nielsen,  Lickorish,  Zieschang,  Birman, Humphries, and others,
and some of this work uses topological arguments.

The main purpose of this article is to give a self\d1contained,  \textit{algebraic} proof
that $\operatorname{\normalfont A}_{g,p}$ is generated by the ADL set.
Such proofs were given
in the case $(g,p) = (1,0)$ by  Nielsen~\cite{Nielsen0},
and in the case $g=0$ by Artin~\cite{A},
 and in the case  $p=0$ by McCool~\cite{Mc}.
In the case where $(g,p) = (1,0)$ or $g=0$, our proof follows
Nielsen's and Artin's.
In the case where $p=0$, McCool  proceeds by adding in the free generators two at a time,
 while, for the  general case, we benefit  from being able to add in
 the free generators one at a time.

We also give a self-contained, algebraic translation of
Humphries' proof~\cite{Humphries} that the  ADLH  set then generates   $\operatorname{\normalfont A}_{g,p}$.

\begin{remark}   In~\cite[Theorem~3.1 with $r=0$]{LP},  Labru\`ere and Paris
use  topological and algebraic results of various authors to
  present
$\operatorname{\normalfont A}_{g,p}$  as the quotient of an Artin group on the ADLH   set
modulo three-or-less relations, each of which is expressed in terms of
centres of Artin subgroups.
We would find it very satisfying to have a direct, algebraic proof of this  beautiful presentation.
Now that we have the ADLH generating set,  it would suffice
to consider the group with the desired presentation and verify
 that its action  on $  \gp{t_{[1 \uparrow  p ]}
 \cup x_{[1 \uparrow g ]} \cup  y_{[1 \uparrow g ]}}
{\quad} $  is faithful.
This is precisely the approach carried out by Magnus~\cite{Magnus}  for both the case $g = 0$, see~\cite[Section~5]{BD},
and the case $g=1$, see~\cite[Section~6.3]{Bacardit}.
The algebraic project remains open for $g \ge 2$.
\hfill\qed
\end{remark}

In outline, the article has the following structure.

In Section~\ref{sec:Z1}, we fix  notation and
 define the Zieschang groupoid, essentially as in~\cite[Section~5.2]{ZVC}
(developed from~\cite{Z64H},~\cite{Z65H},~\cite{Z66a}),
but with modifications taken from work of
McCool~\cite[Lemma~3.2]{DF2}.
We give a simplified proof of a strengthened form of
(the orientable, torsion-free case of)
 Zieschang's result that the  Nielsen-automorphism edges
and the Artin-automorphism edges together generate the
 groupoid.
 Zieschang   used  group\d1theoretical techniques of
Nielsen~\cite{Nielsen1} and Artin~\cite{A},  while McCool used
group\d1theoretical techniques of  Whitehead~\cite{W}.  We  use all  of these.

In Section~\ref{sec:mc}, which is inspired by the proof by
 McCool~\cite{Mc} of the case $p=0$, we define the canonical edges
in the Zieschang groupoid
 and use them to find a special generating set for $\operatorname{\normalfont A}_{g,p}$.

In Section~\ref{sec:main}, we  observe that the results of the previous two sections
immediately imply that the ADL set  generates $\operatorname{\normalfont A}_{g,p}$.
We then present an algebraic translation of Humphries'  proof that the ADLH set also
generates $\operatorname{\normalfont A}_{g,p}$.

At this stage, we will have completed our objective.  For completeness,
we conclude the article with an elementary review of algebraic descriptions of certain mapping-class groups.

In Section~\ref{sec:mcg}, we review  definitions of some mapping-class groups and
mention some of the history of the original proof that the ADLH set generates $\operatorname{\normalfont A}_{g,p}$.

In Section~\ref{sec:boundary},  we recall the definitions of
Dehn twists and braid twists, and see  that the group
$\operatorname{\normalfont A}_{g,p}$ can be viewed as the mapping-class group of
the orientable surface of genus $g$ with $p$ punctures and
one boundary component.

In Section~\ref{sec:noboundary},  we
 consider an orientable surface
 $\mathbf{S}_{g,0,p}$   of genus $g$ with  $p$ punctures, with  $ (g,p) \ne (0,0),\, (0,1)$.
The
\textit{algebraic}
 mapping\d1class group of $\mathbf{S}_{g,0,p}$,  denoted $\operatorname{\normalfont M}_{g,0,p}^{\text{alg}}$,  is defined as
the group of all those outer automorphisms of
$$\pi_1(\mathbf{S}_{g,0,p})= \gp{t_{[1 \uparrow p ]}  \cup x_{[1 \uparrow g ]} \cup  y_{[1 \uparrow g ]}}
{\textstyle \hskip-3pt \operatornamewithlimits{\Pi}\limits_{j \in [p\downarrow 1]} t_j \hskip-2pt
\operatornamewithlimits{\Pi}\limits_{i \in [1 \uparrow g]} [x_i,y_i]} \vspace{-1mm}$$
which permute  the set of conjugacy classes \mbox{$\{\, [t_j], [\overline t_j] : j \in [1{\uparrow}p]\}$.}
We review  Zieschang's  algebraic proof~\cite[Theorem~5.6.1]{ZVC} of Nielsen's result~\cite{Nielsen2} that
$\operatorname{\normalfont M}_{g,0,p}^{\text{alg}}$\vspace{0.5mm} is generated by the natural image of
  $\operatorname{\normalfont A}_{g,p}$\vspace{0.5mm} together with
an   outer automorphism~$\breve \zeta$.  Hence,  $\operatorname{\normalfont M}_{g,0,p}^{\text{alg}}$\vspace{0.5mm}
is generated by the natural image of the ADLH set together with~ $\breve \zeta$.
 In 1966, Magnus, Karrass and Solitar~\cite[p.175]{MKS} remarked that
if one could find a generating set of $\operatorname{\normalfont M}_{g,0,p}^{\text{alg}}$
and  self-homeo\-morph\-isms of $\mathbf{S}_{g,0,p}$ that induce those generators,
then one
would be able to prove that  $\operatorname{\normalfont M}_{g,0,p}^{\text{alg}}$\vspace{0.5mm}
was equal to the (topological)   mapping-class group
$\operatorname{\normalfont M}_{g,0,p}^{\text{top}}$,
even in the then-unknown case  where
 $g \ge 2$ and~$p \ge 2$.
Also in 1966, Zieschang~\cite[Satz~4]{Z66a} used groupoids to prove
 equality, and their remark does not seem to have been followed up.  The generating set
given above fulfills their requirement, since
the image of each  ADL generator   is induced
by a  braid twist  or a Dehn twist of $\mathbf{S}_{g,0,p}$,
and $\breve\zeta$ is induced by a reflection
of~$\mathbf{S}_{g,0,p}$.  This gives a new  way of seeing
that  $\operatorname{\normalfont M}_{g,0,p}^{\text{top}} = \operatorname{\normalfont M}_{g,0,p}^{\text{alg}}$.

\section{The Zieschang groupoid and the Nielsen subgraph}\label{sec:Z1}

In this section, which is  based on~\cite[Section~5.2]{ZVC},
 we define the Zieschang groupoid
$\mathcal{Z}_{g,p}$ and the Nielsen  subgraph $\mathcal{N}_{g,p}$,
and prove that  $\mathcal{N}_{g,p}$  generates   $\mathcal{Z}_{g,p}$.

\medskip

\begin{notation}
We will find it useful to have notation  for intervals in $\integers$
that is different from the notation for intervals in $\reals$.
Let $i$, $j \in \integers$.
We define the sequence $$\bbl1 i{\uparrow}j\bbr1 \coloneq
 \begin{cases}
(i,i+1,\ldots, j-1, j) \in \integers^{j-i+1}   &\text{if $i \le j$,}\\
() \in \integers^0 &\text{if $i > j$.}
\end{cases}
$$
The subset of $\integers$ underlying $\bbl1 i{\uparrow}j\bbr1 $ is denoted $[i{\uparrow}j]\coloneq \{i,i+1,\ldots, j-1, j \}$.

Also,
$[i{\uparrow}\infty[ \,\,\, \coloneq \{i,i+1,i+2,\ldots\}$ .

We define $\bbl1 j{\downarrow}i\bbr1 $ to be the reverse of the sequence $\bbl1 i{\uparrow}j \bbr1 $,
that is, $(j,j-1,\ldots,i+1,i)$.

Suppose that we have a set $X$ and a map $[i{\uparrow}j] \to X$, $\ell \mapsto x_\ell$.
We define the corresponding sequence in $X$ as $$x_{\bbl1 i{\uparrow}j \bbr1 }\coloneq  \begin{cases}
(x_i,x_{i+1},   \cdots, x_{j-1},   x_j) \in  X^{i-j+1}  &\text{if $i \le j$,}\\
()  &\text{if $i > j$.}
\end{cases}$$
By abuse of notation, we shall also express this sequence   as $(x_\ell \mid \ell \in \bbl1 i{\uparrow}j\bbr1 )$,
although \mbox{``$\ell \in \bbl1 i{\uparrow}j\bbr1 $"} on its own will not be assigned a meaning.
The set of terms of~$x_{\bbl1 i{\uparrow}j \bbr1 }$ is denoted~$x_{[i{\uparrow}j]}$.
We define $x_{\bbl1 j{\downarrow}i\bbr1 }$ to be the reverse of the sequence~$x_{\bbl1 i{\uparrow}j\bbr1 }$.
\hfill\qed
\end{notation}

 \begin{notation}  Let $G$ be a multiplicative group.

For each $u \in G$, we denote the inverse of $u$ by both
$u^{-1}$ and $\overline u$.  For $u$, $v \in G$, we let $u^v \coloneq \overline v u v$ and
$[u,v] \coloneq \overline u \, \overline v u v$.
 For $u  \in G$, we let
 $[u] \coloneq \{u^v \mid v \in G\}$,  called the \textit{$G$-conjugacy class of~$u$}.
We let $G/{\sim} \coloneq \{[u] : u \in G\}$, the set of all $G$-conjugacy classes.

Where $G$ is a free group given with a distinguished basis $\mathfrak{B}$,
we think of each $u \in G$ as a reduced word in $\mathfrak{B} \cup  \mathfrak{B}^{-1}$, and
let $\vert u \vert$ denote the
length of the word.
We think of $[u]$ as a cyclically-reduced cyclic word in $\mathfrak{B} \cup  \mathfrak{B}^{-1}$.

Suppose that we have $i$, $j \in \integers$ and a  map  $[i{\uparrow}j] \to G$,
$\ell \mapsto u_\ell$.
We write
\begin{align*}&\operatornamewithlimits{\Pi}_{\ell\in\bbl1 i{\uparrow}j\bbr1 } u_\ell
\coloneq
\Pi u_{\bbl1 i{\uparrow}j\bbr1 }\coloneq     \begin{cases}
u_i   u_{i+1}   \cdots u_{j-1}  u_j \in G   &\text{if $i \le j$,}\\
1 \in G &\text{if $i > j$.}
\end{cases}\\ &\operatornamewithlimits{\Pi}_{\ell\in\bbl1 j{\downarrow}i\bbr1 } u_\ell
\coloneq \Pi u_{\bbl1 j{\downarrow}i\bbr1 }  \coloneq  \begin{cases}
u_j   u_{j-1}   \cdots u_{i+1}   u_i \in G  &\text{if $j \ge i$,}\\
1 \in G &\text{if $j< i $.}
\end{cases}\end{align*}

When we have  $G$ acting on a set $X$, then, for each $x \in X$, we let $\Stab(x; G)$ denote
the set of elements of $G$ which stabilize, or fix,  $x$.

We let $\Aut G$ denote the group of all automorphisms of $G$,
 acting on the right, as exponents,  $u \mapsto u^\phi$.
In a natural way, $\Aut G$ acts on $G/{\sim}$ and on the set of subsets of $G \cup (G/{\sim})$.

We let $\Out G$ denote the quotient of  $\Aut G$ modulo the group of inner automorphisms,
  we call the elements of $\Out G$ \textit{outer automorphisms}, and
we denote the quotient map  $\Aut G \to \Out G$ by $\phi \mapsto \breve \phi$.
In a natural way, $\Out G$ acts on $G/{\sim}$ and on the set of subsets of $ G/{\sim} $.
 \hfill \qed
\end{notation}

\begin{notation} The following will be fixed throughout.

Let $g$, $p \in [0{\uparrow}\infty[$\,.
Let $F_{g,p} \coloneq \gp{t_{[1 \uparrow p]} \cup x_{[1 \uparrow g ]} \cup  y_{[1 \uparrow g ]}}{\quad}$,
a free group of rank $2g{+}p$ with a distinguished basis.
We shall find it convenient to use   abbreviations such as
$$ [ t]_{[1 {\uparrow} p]} \coloneq \{ [ t_j] : j \in [1 {\uparrow} p]\} ,  \qquad \hskip-16pt t_{[1{\uparrow}p]}^{\pm 1} \coloneq
\{t_j, \overline t_j  :  j \in [1 {\uparrow} p]\}, \qquad \hskip-16pt
\operatornamewithlimits{\Pi}  [x ,y ]_{\bbl1 1 {\uparrow} g \bbr1 } \coloneq
\operatornamewithlimits{\Pi}\limits_{i \in \bbl1 1 \uparrow g \bbr1 } [x_i,y_i].$$

The elements of
$ t_{[1{\uparrow}p]}^{\pm 1}
\cup   x_{[1{\uparrow}g]}^{\pm 1}   \cup
y_{[1{\uparrow}g]}^{\pm 1}  $ will be called \textit{letters}.
The elements of $t_{[1{\uparrow}p]} $
will be called \textit{$t$-letters}.
The elements of $\overline t_{[1{\uparrow}p]} $
will be called \textit{inverse $t$-letters}.
The elements of $x_{[1{\uparrow}g]}^{\pm 1}   \cup
y_{[1{\uparrow}g]}^{\pm 1}$
will be called \textit{$x$-letters}.

We shall usually codify an element $\phi \in \Aut F_{g,p}$ as a two-row matrix where the
first row gives, for some basis consisting of letters,  all those  elements  which are moved by $\phi$,
and the second row  equals the $\phi$-image of the first row.

We shall be working throughout with the group $\Stab([ t]_{[1 {\uparrow} p]} ; \Aut F_{g,p})$ (which permutes the
set of cyclic words $[ t]_{[1 {\uparrow} p]}$)  and its subgroup
  $$\operatorname{\normalfont A}_{g,p} \coloneq
 \Stab( [ t]_{[1 {\uparrow} p]} \cup \{\operatornamewithlimits{\Pi}  t_{\bbl1 p {\downarrow} 1 \bbr1 }
 \operatornamewithlimits{\Pi}  [x ,y ]_{\bbl1 1 {\uparrow} g \bbr1 } \} ;\Aut F_{g,p}).$$
\vskip -6mm \hfill\qed
\end{notation}

\medskip

\begin{definitions} Let $g$, $p \in [0{\uparrow}\infty[$\, and let
$F_{g,p} \coloneq \gp{t_{[1 \uparrow p]} \cup x_{[1 \uparrow g ]} \cup  y_{[1 \uparrow g ]}}{\quad}$.

Let
  $  [1{\uparrow}(4g{+}p)] \to
 t_{[1{\uparrow}p]} \cup x_{[1{\uparrow}g]}^{\pm 1} \cup  y_{[1{\uparrow}g]}^{\pm 1}$,\,\, $k \mapsto v_k,$
be a bijective map, let \mbox{$V \coloneq \operatornamewithlimits{\Pi}\limits v_{\bbl1 1{\uparrow}(4g+p)\bbr1 }$},
and let $\Gamma$ denote the graph with
\begin{list} {}{}
\item  vertex set
 $ t_{[1{\uparrow}p]}^{\pm 1} \cup x_{[1{\uparrow}g]}^{\pm 1} \cup   y_{[1{\uparrow}g]}^{\pm 1} $, and
\item  edge set   $\{(\overline t_j {\rightsquigarrow} t_j) \mid j \in [1{\uparrow}p]\}
\,\, \cup\,\,
\{(v_k {\rightsquigarrow} \overline v_{k+1}) \mid k \in [1{\uparrow}(4g{+}p{-}1)]\}.$
\end{list}
If $\Gamma$ has no cycles (that is,   $\Gamma$ is a forest), then
we  say that
 $V$  is a
 \textit{Zieschang element} of~$F_{g,p}$ and that  $\Gamma$ is   the
 \textit{extended Whitehead graph of~$V$}; we note that the condition that $\Gamma$ has no cycles implies that
 $ \operatornamewithlimits{\Pi}\limits v_{\bbl1 1{\uparrow}(4g+p)\bbr1 }$
 is  the  reduced expression for $V$, and, hence, $\Gamma$ is the usual Whitehead graph of
 \mbox{$[\overline   t]_{[1 {\uparrow} p]} \cup
\{  V  \}$}, as in~\cite{W}.
If $(g,p) \ne (0,0)$ and $V$ is a Zieschang element of~$F_{g,p}$,
 then $\Gamma$ has the form of an oriented line segment with $4g{+}2p$ vertices and
$4g{+}2p{-}1$ edges; here, we
  define \mbox{$v_0 \hskip-1pt\coloneq \hskip-1pt v_{4g+ p+1} \hskip-1pt \coloneq  \hskip-1pt 1$}, and
book-end $\Gamma$  with the \textit{ghost} edges
$(v_0 {\rightsquigarrow} \overline v_1)$ and  $(v_{4g+ p} {\rightsquigarrow} \overline v_{4g+ p+1})$.

For example,  $V_0
\coloneq \Pi t_{\bbl1 p{\downarrow}1\bbr1 }\Pi [x ,y ]_{\bbl1 1{\uparrow}g \bbr1 }$ is a Zieschang element of
$F_{g,p}$, and its
extended Whitehead graph is  $$ \overline t_p  {\rightsquigarrow} t_p {\rightsquigarrow} \overline t_{p-1} {\rightsquigarrow} t_{p-1} {\rightsquigarrow} \cdots {\rightsquigarrow} \overline t_1 {\rightsquigarrow} t_1
{\rightsquigarrow} x_1 {\rightsquigarrow} \overline y_1 {\rightsquigarrow} \overline x_1 {\rightsquigarrow} y_1
{\rightsquigarrow} x_2 {\rightsquigarrow}  \cdots
{\rightsquigarrow} x_g {\rightsquigarrow} \overline y_g {\rightsquigarrow} \overline x_g {\rightsquigarrow} y_g .$$

The   \textit{Zieschang groupoid} for $F_{g,p}$, denoted $ \mathcal{Z}_{g,p}$,
is defined as  follows.
 \begin{list}{$\bullet$}{}
\item The set $\operatorname{V}\!\! \mathcal{Z}_{g,p}$ of  vertices/objects   of $ \mathcal{Z}_{g,p}$
equals the set of  Zieschang elements of $F_{g,p}$.
\item The edges/elements/morphisms    of $\mathcal{Z}_{g,p}$ are the triples $(V,W,\phi)$
such that  \mbox{$V$, $W  \in   \operatorname{V}\!\! \mathcal{Z}_{g,p}$},
and
\mbox{$\phi \in  \Stab([ t]_{[1 {\uparrow} p]} ; \Aut F_{g,p})$,}
and $V^\phi = W$.  Here, we say that $(V {\xrightarrow{\phi}} W)$, or
 $V {\xrightarrow{\phi}} W$,    is an edge of $ \mathcal{Z}_{g,p}$  \textit{from $V$ to~$W$},
and denote the set of such edges by $\mathcal{Z}_{g,p}(V,W)$.

\item The partial multiplication in $\mathcal{Z}_{g,p}$  is defined using the multiplication in
$\Stab([ t]_{[1 {\uparrow} p]} ; \Aut F_{g,p})$
in the natural way.
\end{list}

\medskip

If $V  \in \operatorname{V}\!\! \mathcal{Z}_{g,p}$,  then,  as a group,
\mbox{$\mathcal{Z}_{g,p}(V,V) = \Stab([ t]_{[1 {\uparrow} p]} \cup \{V\}; \Aut F_{g,p}).$}
Thus
$\mathcal{Z}_{ g,p }(V_0, V_0) = \operatorname{\normalfont A}_{g,p}$.  Throughout, we shall view the elements of
$\operatorname{\normalfont A}_{g,p}$ as edges of $\mathcal{Z}_{g,p}$ from $V_0$ to $V_0$.
We shall be using $V_0$ as a basepoint of $\mathcal{Z}_{g,p}$ in Definitions~\ref{defs:phi},
where we will verify  that $\mathcal{Z}_{g,p}$ is connected.  \hfill\qed
\end{definitions}

\begin{definitions}  Let   $g$, $p \in [0{\uparrow}\infty[$\,,
let $F_{g,p} \coloneq \gp{t_{[1 \uparrow p]} \cup x_{[1 \uparrow g ]} \cup  y_{[1 \uparrow g ]}}{\quad}$\vspace{.5mm},
let  \mbox{$V$\!, $W \in F_{g,p}$,} and let  $\phi \in   \Aut F_{g,p} $.
Suppose that $V \in \operatorname{V}\!\!\mathcal{Z}_{g,p}$,
and that  $ V^\phi = W$.

 If $\phi$ permutes
the $t$-letters and  permutes the $x$-letters, then
 we say
that $V {\xrightarrow{\phi}} W$ is a \textit{$\iN_1$ edge  in $\mathcal{Z}_{g,p}$}.
To see that $(V {\xrightarrow{\phi}} W) \in \mathcal{Z}_{g,p}$,
notice that \mbox{$\phi \in \Stab([ t]_{[1 {\uparrow} p]} ; \Aut F_{g,p})$}
and  $W \in \operatorname{V}\!\!\mathcal{Z}_{g,p}$.

If there exists
 some \mbox{$k \in [1{\uparrow}(4g{+}p{-}1)]$ } such that the letter
\mbox{$v_k$} is an $x$-letter and
$\phi = \left(\begin{smallmatrix}
 v_k\phantom{v_{k+1}}  \\ v_k \overline v_{k+1}
\end{smallmatrix}\right)$, then
we say that $V {\xrightarrow{\phi}} W$ is a \textit{right $\iN_2$ edge  in $\mathcal{Z}_{g,p}$.}
To see that $(V {\xrightarrow{\phi}} W) \in \mathcal{Z}_{g,p}$, we note the following.
  In passing from $V$ to $W$,
we remove the boxed part in $v_k\boxed{\!v_{k+1}\!\!}\,v_{k+2}$  and  add the boxed part in
$v_{j-1} \boxed{\!v_{k+1}\!\!}\, v_j$,  where  $v_j = \overline v_k$.
In passing from the extended Whitehead graph of $V$ to the extended Whitehead graph of $W$,
we remove the boxed part  in   $v_{j-1}\!{\rightsquigarrow}\boxed{\!\overline v_j{=}v_k{\rightsquigarrow}\!\!} \,  \overline v_{k+1}$
and add the boxed part in \mbox{$v_{k+1}\!{\rightsquigarrow}\boxed{\!\overline v_j{=}v_k{\rightsquigarrow}\!\!}\,\, \overline v_{k+2}$},
where we have indicated a
 ghost edge  if  $j=1$ or $k= 4g{+}p{-}1$. Hence, $(V {\xrightarrow{\phi}} W) \in \mathcal{Z}_{g,p}$.

If there exists some $k \in [2{\uparrow}(4g{+}p)]$ such that
\mbox{$v_k$} is an $x$-letter and
$\phi = \left(\begin{smallmatrix}
v_k\phantom{v_{k-1}}\\  \overline v_{k-1} v_k
\end{smallmatrix}\right)$,
then we say that $V {\xrightarrow{\phi}} W$ is a \textit{left $\iN_2$ edge  in $\mathcal{Z}_{g,p}$}.
This is an inverse of an edge of the previous type.

By a   \textit{$ \iN_2$  edge  in $\mathcal{Z}_{g,p}$}, we mean a left    or
  right $\N_2$ edge  in $\mathcal{Z}_{g,p}$.

If there exists some \mbox{$k \in [1{\uparrow}(4g{+}p{-}1)]$} such that
the letter $v_k $ is a $t$-letter and
$\phi = \left(\begin{smallmatrix}
v_k\phantom{v_{k+1}v_{k+1}} \\ v_{k+1} v_k \overline v_{k+1}
\end{smallmatrix}\right)$, then we say that $V {\xrightarrow{\phi}} W$ is a
\textit{right $\iN_3$ edge  in $\mathcal{Z}_{g,p}$}.
To see that $(V {\xrightarrow{\phi}} W) \in \mathcal{Z}_{g,p}$, we note the following.
 In passing from $V$ to $W$,
we change $v_{k-1}v_k\boxed{\!v_{k+1}\!\!}v_{k+2}$ to $v_{k-1}\boxed{\!v_{k+1}\!\!}\,v_kv_{k+2}$.
In passing from the extended Whitehead graph of $V$ to the extended
Whitehead graph of $W$,
we remove the  boxed part  in
$v_{k-1}\!{\rightsquigarrow}\boxed{\!\overline v_k{\rightsquigarrow}v_k{\rightsquigarrow}\!\!}\,\,  \overline v_{k+1}$
and add the boxed part in
$v_{k+1}\!{\rightsquigarrow}\boxed{\!\overline v_k{\rightsquigarrow}v_k{\rightsquigarrow}\!\!}\,\, \overline v_{k+2}$,
where we have indicated  a ghost edge
  if  $k=1$ or \mbox{$k= 4g{+}p{-}1$}. Hence,
\mbox{$(V {\xrightarrow{\phi}}W) \in \mathcal{Z}_{g,p}$}.

If there exists some $k \in [2{\uparrow}(4g{+}p)]$ such that the letter
$v_k $ is a $t$-letter and
$\phi = \left(\begin{smallmatrix}
v_k\phantom{v_{k-1}v_{k-1}} \\ \overline  v_{k-1} v_k v_{k-1}
\end{smallmatrix}\right)$,
we say that $V {\xrightarrow{\phi}} W$ is a \textit{left $\iN_3$ edge  in $\mathcal{Z}_{g,p}$}.
This is an inverse of an edge of the previous type.

By a   \textit{$ \iN_3$  edge  in $\mathcal{Z}_{g,p}$}, we mean a left    or
  right $\N_3$ edge  in $\mathcal{Z}_{g,p}$.

By   a \textit{Nielsen edge  in $\mathcal{Z}_{g,p}$}, we mean
  a $\N_i$ edge  in $\mathcal{Z}_{g,p}$, for some $i \in \{1,2,3\}$.

We define the \textit{Nielsen  subgraph
  of $\mathcal{Z}_{g,p}$}, denoted $\mathcal{N}_{g,p}$, to be the graph with vertex set
\mbox{$ \operatorname{V}\!\!\mathcal{Z}_{g,p}$}
 and edges, or elements, the  Nielsen edges in $\mathcal{Z}_{g,p}$.
\hfill\qed
\end{definitions}

We now give a simplified proof of a result  due to Zieschang and McCool.

\begin{theorem}\label{thm:z}  Let $g$, $p \in [0{\uparrow}\infty[$\,,
let $F_{g,p} \coloneq \gp{t_{[1 \uparrow p]} \cup x_{[1 \uparrow g ]} \cup  y_{[1 \uparrow g ]}}{\quad}$,
let  \mbox{$V$\!, $W \in F_{g,p}$,}
 let $H$ be a free group,  let $\phi $ be an  endomorphism of $H{\ast}F_{g, p }$, and
 suppose that  the following hold.\vspace{-.5mm}
\begin{enumerate}[{\normalfont (a).}]
\item $V \in  \operatorname{V}\!\!\mathcal{Z}_{g,p}$.
\item $\vert W \vert \le 4g+p$.
\item $V^\phi = W$.
\item There exists some permutation $\pi$  of $[1{\uparrow}p]$ such that,
for each \mbox{$j \in [1{\uparrow}p]$}, $t_j^\phi$ is   $(H{\ast}F_{g, p })$-conjugate to~$t_{j^\pi}$.
\item  $F_{g, p }^\phi \simeq F_{g, p }$.  \vspace{-2mm}
\end{enumerate}
Then $W  \in  \operatorname{V}\!\!\mathcal{Z}_{g,p}$  and
there  exists an edge $V {\xrightarrow{\phi'}}W$    in the subgroupoid of
$\mathcal{Z}_{ g,p }$ generated by the Nielsen subgraph $\mathcal{N}_{g,p}$ such that
$\phi$ acts  as~$\phi'$ on  the free factor~$F_{g,p}$.
\end{theorem}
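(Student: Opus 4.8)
The strategy is an induction that measures how far $W$ is from being a Zieschang element, combined with Whitehead's peak-reduction philosophy. The natural complexity to induct on is something like the pair $(\vert W\vert,\ c(W))$, where $c(W)$ counts the number of cycles (or the excess in the cycle-rank) of the graph $\Gamma_W$ attached to $[\overline t]_{[1\uparrow p]}\cup\{W\}$ as in the definition of Zieschang element; hypothesis (b) already forces $\vert W\vert\le 4g+p$, and the goal is to drive $c(W)$ down to $0$, at which point $W$ is automatically a Zieschang element and one can read off the desired Nielsen factorisation edge-by-edge from the structure of $\Gamma_W$ (the $\N_2$ and $\N_3$ edges are exactly the elementary moves that splice letters across the linear graph of $V$, and the $\N_1$ edges absorb the permutation $\pi$ from hypothesis (d)). First I would set up the base case: if $W$ is already a Zieschang element, then comparing the extended Whitehead graphs of $V$ and $W$ — both oriented line segments on the same vertex set — one sees that $W$ is obtained from $V$ by a permutation of the $t$-blocks and of the $x$-letters together with a bounded sequence of the splicing moves, each of which is realised by an $\N_i$ edge; chaining these gives $\phi'$, and one checks $\phi$ and $\phi'$ agree on $F_{g,p}$ because both send $V\mapsto W$ and, on the free factor $F_{g,p}$, an automorphism fixing a Zieschang element together with the conjugacy classes $[t_j]$ is determined up to the allowed ambiguity — here hypothesis (e) is what guarantees $\phi$ restricts to an automorphism of $F_{g,p}$, not merely an endomorphism, so this comparison makes sense.

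For the inductive step I would argue that if $W$ is not a Zieschang element, i.e. $\Gamma_W$ has a cycle, then there is a Whitehead automorphism $\tau$ (of the elementary ``left-multiply-by-a-letter'' or ``right-multiply'' type, i.e. exactly the shape of an $\N_2$ or $\N_3$ move) such that $\vert W^\tau\vert<\vert W\vert$, or $\vert W^\tau\vert=\vert W\vert$ but $c(W^\tau)<c(W)$; this is Whitehead's peak-reduction lemma applied to the tuple $[\overline t]_{[1\uparrow p]}\cup\{W\}$, using that $W$ has length at most $4g+p$ while a genuine Zieschang element has length exactly $4g+p$, so any defect in the graph is detectable as a Whitehead-reducible configuration. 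One must check that $\tau$ can be chosen of the special ``Nielsen'' form rather than a general Whitehead automorphism — this is where the $t$-letters must be handled with care, since they may only be conjugated (the $\N_3$ moves) and not inverted, and this restriction is compatible with hypothesis (d) which pins down the $t_j^\phi$ as conjugates of the $t_{j^\pi}$. Then $V\xrightarrow{\phi\tau}W^\tau$ again satisfies all of (a)–(e) (conditions (d) and (e) are stable under composing with such $\tau$, and (b) is re-established since $\vert W^\tau\vert\le\vert W\vert\le 4g+p$), so by induction there is a Nielsen-subgroupoid edge $V\xrightarrow{\psi}W^\tau$ agreeing with $\phi\tau$ on $F_{g,p}$; composing with the Nielsen edge $W^\tau\xrightarrow{\tau^{-1}}W$ yields the required $\phi'$.

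The main obstacle I expect is the bookkeeping in the inductive step that forces the reducing automorphism $\tau$ to be an honest Nielsen edge (one of the explicitly tabulated $\N_1,\N_2,\N_3$ moves acting on consecutive letters of the linear graph) rather than an arbitrary length-reducing Whitehead automorphism; the content of Zieschang's and McCool's work, and of the claimed ``simplified proof,'' is precisely that the extended Whitehead graph being an oriented line segment is rigid enough that every Whitehead reduction can be replaced by a sequence of these elementary edge-splicings. A secondary technical point is verifying that $\phi$ and the constructed $\phi'$ agree on the free factor $F_{g,p}$ and not merely on $V$: since $\phi$ need only be an endomorphism of $H\ast F_{g,p}$, one uses hypotheses (d) and (e) to see that $\phi|_{F_{g,p}}$ is an automorphism of $F_{g,p}$, and then that any two automorphisms of $F_{g,p}$ lying in $\operatorname{Stab}([t]_{[1\uparrow p]};\operatorname{Aut}F_{g,p})$ and sending $V$ to the same Zieschang element $W$ must coincide — a statement about the action of $\operatorname{A}_{g,p}$ on $\operatorname{V}\!\!\mathcal{Z}_{g,p}$ that can be checked directly on the linear graphs, or deferred to the connectivity/basepoint discussion promised in Definitions~\ref{defs:phi}. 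I would keep the free group $H$ purely as a ``container'' so that the conjugators witnessing hypothesis (d) have room to live, and never let $\phi'$ involve $H$ at all.
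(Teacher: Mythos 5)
Your plan has two gaps that I think are fatal as it stands, and both come from inducting on the target $W$ rather than on the map $\phi$. First, the base case: when $W$ is already a Zieschang element, comparing the two line-segment graphs can at best produce \emph{some} composite of Nielsen edges carrying $V$ to $W$; to upgrade this to the required $\phi'$ you invoke the claim that two elements of $\Stab([t]_{[1{\uparrow}p]};\Aut F_{g,p})$ sending $V$ to the same Zieschang element must coincide. That claim is false: two such automorphisms differ by an arbitrary element of $\Stab([t]_{[1{\uparrow}p]}\cup\{V\};\Aut F_{g,p})$, and for $V=W=V_0$ this stabilizer is all of $\operatorname{\normalfont A}_{g,p}$, which is nontrivial whenever $2g+p\ge 2$ (it is the very group whose generators the paper is computing). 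The theorem is not a connectivity statement about $\mathcal{N}_{g,p}$; it asserts that the \emph{given} $\phi$, restricted to $F_{g,p}$, factors through Nielsen edges, and your base case does not produce that. Second, the inductive step is circular: you post-compose with a reducing $\tau$ and then want to append ``the Nielsen edge $W^\tau\xrightarrow{\tau^{-1}}W$'', but edges of $\mathcal{Z}_{g,p}$ (hence of $\mathcal{N}_{g,p}$) require both endpoints to be Zieschang elements, and in the case you are treating, $W$ being one is exactly the unproven part of the conclusion. Moreover the restricted peak-reduction lemma you rely on (every cycle in $\Gamma_W$ is removable by a length- or cycle-reducing automorphism of the special $\N_2/\N_3$ shape) is precisely the ``main obstacle'' you defer; it is not a standard Whitehead statement, and proving it is essentially the content of the theorem, so nothing has been reduced.

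For contrast, the paper's proof never touches $W$ and never needs to decide in advance whether $W$ is a Zieschang element. It attaches a complexity to the map itself, namely the multiset $\mu(V\xrightarrow{\phi}W)$ of the $\phi$-images of the letters, ordered by Nielsen's length-plus-left-half lexicographic pre-order, and shows (Claim 1, following Nielsen for $x$-letters and Artin for $t$-letters) that whenever the cancellation pattern between consecutive $v_k^\phi$ is bad, there is a Nielsen edge $V\xrightarrow{\alpha}U$ \emph{at the source} with $\mu(U\xrightarrow{\overline\alpha\phi}W)\prec\mu(V\xrightarrow{\phi}W)$; here $U$ is automatically a Zieschang element, so every intermediate edge legitimately lies in $\mathcal{N}_{g,p}$, and $\phi=\alpha\cdot(\overline\alpha\phi)$ is an honest factorization of the given map. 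When no reduction applies, the length count forced by hypothesis (b) makes each $w_k$ a letter, and a Whitehead-graph argument (Claim 2) shows the residual map permutes the $t$-letters and the $x$-letters, i.e.\ is an $\N_1$ edge --- and it is this final step that simultaneously proves $W\in\operatorname{V}\!\!\mathcal{Z}_{g,p}$, that $\phi$ restricts to an automorphism of $F_{g,p}$, and that the restriction is the product of the accumulated Nielsen edges. If you want to salvage your outline, move the reduction to the $V$ end and measure the map, not the target.
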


\begin{proof}  We may assume that $(g,p) \ne (0,0)$.
Extend $  t_{[1{\uparrow}p]} \cup  x_{[1{\uparrow}g]} \cup  y_{[1{\uparrow}g]}$ to
  a basis $\mathfrak{B}$ of the free group $F_{g,p}{\ast} H$.  For each
  \mbox{$A \in F_{g,p}{\ast} H$},   $\vert A \vert$
denotes the   length of
 $A$  as a reduced product in $\mathfrak{B}{\cup}\mathfrak{B}^{-1}$.
Choose a total order, denoted $\le$,  on $\mathfrak{B}{\cup}\mathfrak{B}^{-1}$, and
extend  $\le$ to a length-lexicographic total order, also denoted $\le$, on $F_{g,p}{\ast} H$.

Consider the reduced expression  $V =  \Pi  v_{\bbl1 1{\uparrow}(4g+p)\bbr1 }$.
Let $v_0 \coloneq v_{4g+p+1} \coloneq 1$.

For each $k \in [0{\uparrow}(4g{+}p)]$, let
 $A_k$ denote the largest common initial subword of $\overline v_k^\phi$ and $ v_{k+1}^\phi$
with respect to $\mathfrak{B}{\cup}\mathfrak{B}^{-1}$.
Since   $v_0 = v_{4g+p+1} = 1$, we have \mbox{$A_0 = A_{4g+p} = 1$}.
For each $k \in [1{\uparrow}(4g{+}p)]$, let $w_k \coloneq  \overline A_{k-1} v_k^\phi   A_{k} \in F_{g,p}{\ast} H$.
Then \mbox{$v_k^\phi  =  A_{k-1} w_k \overline A_{k}$,} where this expression need not be reduced.

We shall  show in Claim 1  that we may assume that  \mbox{$A_k < A_{k-1} w_k$} and that
\mbox{$A_{k} < A_{k+1} \overline w_{k+1}$},
and then show in Claim 2 that this ensures that $\phi$ permutes
the $t$-letters and permutes the $x$-letters.

We let   $\binom{F_{g,p}{\ast} H}{4g{+}p}$ denote the set of $(4g{+}p)$-element subsets of   $F_{g,p}{\ast} H$, and
  define a pre-order $\preccurlyeq$ on $\binom{F_{g,p}{\ast} H}{4g{+}p}$ as follows.
For each $A \in F_{g,p}{\ast} H$, there is a unique reduced
expression $A = A^{(L)}A^{(R)}$ with the property that
\mbox{$ \vert A^{(L)} \vert  -  \vert A^{(R)}\vert  \in \{0,1\}$}.
For $A, B \in F_{g,p}{\ast} H$, we write $A \preccurlyeq B$ if either
$\vert A\vert < \vert B \vert$ or  ($\vert A\vert = \vert B \vert$
and \mbox{$A^{(L)} \le B^{(L)}$}).
We can arrange each element of $\binom{F_{g,p}{\ast} H}{4g{+}p}$ as a (not necessarily
unique) ascending sequence with respect to $\preccurlyeq$,
and assign  $\binom{F_{g,p}{\ast} H}{4g{+}p}$ the (unique) lexicographic
pre-order, again denoted $\preccurlyeq$.  Here, $\mathbb{A}\prec\mathbb{B}$ will mean
 $\mathbb{A}\preccurlyeq\mathbb{B}$ and  $\mathbb{B}\not\preccurlyeq\mathbb{A}$.

Without assigning any meaning to $V {\xrightarrow{\phi}} W$, let us write
$$\mu(V {\xrightarrow{\phi}} W)
 \coloneq    t_{[1{\uparrow}p]}^{ \phi} \cup
 (x_{[1{\uparrow}g]}^{\pm 1})^{ \phi}\cup  (y_{[1{\uparrow}g]}^{\pm 1})^{ \phi}  \,\,\,\in\,\,\,
(\textstyle\binom{F_{g,p}{\ast} H}{4g{+}p}, \preccurlyeq).$$
 It follows from (e) that  there are $4g{+}p$ distinct elements in the set $\mu(V {\xrightarrow{\phi}}W)$.

\medskip

\noindent \textbf{Claim 1}. \textit{Let $k \in [1{\uparrow}(4g{+}p{-}1)]$.
If    $A_k \ge A_{k-1} w_k$ or $A_{k} \ge A_{k+1} \overline w_{k+1}$,\vspace{-.1mm} then
there exists some
$(V {\xrightarrow{\alpha}} U) \in  \mathcal{N}_{g,p}$
such that $\mu(U {\xrightarrow{\overline \alpha\phi}} W)
\prec \mu(V{\xrightarrow{\phi}}W)$.}

\begin{proof} [Proof of Claim $1$]  We have specified reduced expressions
$ v_k^\phi = B \overline  A $ and \mbox{$ v_{k+1}^\phi =  A  \overline C$} and
$ v_k^\phi v_{k+1}^\phi = B\overline C$, where
  $A \coloneq A_k$,   $B \coloneq A_{k-1}  w_k $,    \mbox{$C \coloneq  A_{k+1} \overline w_{k+1}$}.
  It follows from~(e) that
 $A$, $B$, and $C$ are all different.

 By hypothesis,
\mbox{$A \ne \min(\{A,B,C\}, \le)$}.  We shall consider only the case where \mbox{$B = \min(\{A,B,C\}, \le)$};
the argument where $C = \min(\{A,B,C\}, \le) $ is similar.  Thus we have $A > B < C$.

The letter $v_{k+1}$ is either a $t$-letter or an $x$-letter.

\medskip

\noindent \textbf{Case 1.} $v_{k+1}$ is an $x$-letter.

On taking $\alpha \coloneq \left( \begin{smallmatrix}
v_{k+1} \\ \overline v_k v_{k+1}
\end{smallmatrix}\right) $, we have a    $\N_2$ edge
$(V {\xrightarrow{\alpha}}U) \in \mathcal{N}_{g,p}$.
Here \mbox{$\overline \alpha \coloneq \left( \begin{smallmatrix}
v_{k+1} \\  v_k v_{k+1}
\end{smallmatrix}\right) $} and $v_{k+1}^{\overline \alpha\phi} = v_k^\phi v_{k+1}^\phi = B\overline C$.
In this case, the change from $\mu(V {\xrightarrow{\phi}}W)$ to
$\mu(U {\xrightarrow{\overline \alpha \phi}}W)$
consists of replacing
$\{v_{k+1}^\phi, \overline v_{k+1}^\phi\}  = \{A  \overline C, C \overline A\} $ with
$\{v_{k+1}^{\overline \alpha \phi}, \overline v_{k+1}^{\overline \alpha \phi}\}  = \{B\overline C, C \overline B\} $.
To show that $\mu(U {\xrightarrow{\overline \alpha\phi}} W)
\prec \mu(V {\xrightarrow{\phi}} W)$, it now suffices to show that
$B \overline C \prec A  \overline C$ and $C \overline B \preccurlyeq C \overline A$.

If \mbox{$\vert A\vert > \vert B \vert$,} then
\mbox{$\vert B\overline C \vert = \vert C \overline B \vert = \vert B \vert + \vert C \vert <
\vert A \vert + \vert C \vert = \vert A  \overline C\vert = \vert C \overline A\vert$}, and,\linebreak hence,
$B \overline C \prec A  \overline C$ and $C \overline B \prec   C \overline A$.

If $\vert A\vert = \vert B \vert$, then, since $ A > B < C$, we have $\vert A\vert =\vert B\vert \le \vert C \vert$
and $B < A$.  Hence   $B \overline C \prec A  \overline C$ and $C \overline B \preccurlyeq C \overline A$.

\noindent \textbf{Case  2.}  $v_{k+1}$ is a $t$-letter.

On taking $\alpha \coloneq \left( \begin{smallmatrix}
v_{k+1} \\ \overline v_k v_{k+1} w_k
\end{smallmatrix}\right) $, we have a $\N_3$ edge
$(V {\xrightarrow{\alpha}} U) \in\mathcal{N}_{g,p}$.
Here \mbox{$\overline \alpha \coloneq \left( \begin{smallmatrix}
v_{k+1} \\  v_k v_{k+1} \overline v_k
\end{smallmatrix}\right)$.}
In this case, the change from $\mu(V {\xrightarrow{\phi}} W)$ to
\mbox{$\mu(U {\xrightarrow{\overline \alpha \phi}} W)$},
consists of replacing
\mbox{$v_{k+1}^\phi   =  A\overline C$} with
$ v_{k+1}^{\overline \alpha \phi}  = v_k^\phi v_{k+1}^\phi \overline v_k^\phi  =
(B\overline A)(A\overline C)(A \overline B)=  B\overline C A \overline B $.
To show that \mbox{$\mu(U {\xrightarrow{\overline \alpha\phi}}W)
\prec \mu(V {\xrightarrow{\phi}} W)$}, it suffices to show that
$ B\overline C A \overline B \prec A  \overline C$.

Let $D \coloneq \min(\{A,C\}, \le )$.  Since  $v_{k+1}$ is a $t$-letter,
there exists some $j \in [1{\uparrow}p]$ such that
$ v_{k+1}^\phi $    is a conjugate of   $t_j$, that is,  $A \overline C $
is a conjugate of   $t_j$.
Thus, both $A \overline C $ and $C \overline A$ begin with $D$,
and we can write
$A \overline C = DE t_j \overline E\, \overline D$ with no cancellation.   Now
 $ E t_j \overline E = \overline D  A \overline C D = \overline C A$.
Hence $B\overline C A \overline B = BEt_j \overline E\, \overline B$ where
this expression may have cancellation.  Recall that  $B < D$.
Thus $ BEt_j \overline E\, \overline B
\prec DE t_j \overline E\, \overline D$, that is,
$B\overline C A \overline B  \prec A \overline C$.

\medskip

This completes the proof of Claim 1.
\end{proof}

Claim 1 gives a procedure for reducing $\mu(V {\xrightarrow{\phi}}W)$.
Once $\phi$ is specified, only a finite subset of
$\mathfrak{B}{\cup}\mathfrak{B}^{-1}$ is ever involved, and, moreover, there is
an upper bound for the  lengths of the elements of
$F_{g,p}{\ast}H$ which will appear.  It follows that we can repeat
the procedure  only a finite number of times.
Hence, we may now assume that,  for each $k \in [1{\uparrow}(4g{+}p{-}1)]$,
$A_k < A_{k-1} w_k$ and \mbox{$A_{k} < A_{k+1} \overline w_{k+1}$}.

\medskip

\noindent \textbf{Claim 2.}  \textit{Under the latter assumption,
$\phi$ permutes the $t$-letters and permutes the $x$-letters, and the desired conclusion holds.}

\bigskip

\noindent \textit{Proof of Claim $2$.}
For each $k \in [1{\uparrow}(4g{+}p)]$, $A_k < A_{k-1} w_k$
and $A_{k-1} < A_{k} \overline w_{k}$ (even for $k = 1$ and $k = 4g{+}p$).
It follows that $ w_k  \ne 1$ and also that  the expression
\mbox{$v_k^\phi  =  A_{k-1} w_k \overline A_{k}$}  is reduced.
It then follows that, for each $k \in  [1{\uparrow}(4g{+}p{-}1)]$,
\mbox{$A_{k-1} w_k w_{k+1} \overline A_{k+1}$} is a reduced expression for $v_k^\phi v_{k+1}^\phi$.
Now
\begin{equation*}
W=V^\phi = \textstyle( \Pi  v_{\bbl1 1{\uparrow}
(4g+p) \bbr1 })^\phi   = \mkern-10mu
\textstyle\operatornamewithlimits{\Pi}\limits_{\null_{k \in \bbl1 1{\uparrow} (4g+p) \bbr1 }}
 \mkern-30mu( A_{k-1} w_k \overline A_{k})=
\textstyle\Pi  w_{\bbl1 1{\uparrow} (4g+p) \bbr1 },
\end{equation*}
 and we have just seen that the expression $\Pi  w_{\bbl1 1{\uparrow} (4g+p) \bbr1 }$ is  reduced.
By (b), $$\textstyle 4g{+}p \mkern5mu\ge\mkern5mu
\vert  W  \vert =  \vert
\operatornamewithlimits{\Pi}  w_{\bbl1 1{\uparrow} (4g+p) \bbr1 }
\vert =   \textstyle\sum\limits_{k=1}^{4g{+}p}     \vert   w_k  \vert
\mkern5mu \ge\mkern5mu   4g{+}p.$$
Hence, equality holds throughout, and, for each $k \in [1{\uparrow}(4g{+}p)]$, $\vert w_k \vert = 1$
and $w_k$ is a letter.

 Let $s_{\bbl1 1{\uparrow}(4g+2p)\bbr1 }$ be the vertex sequence in the extended Whitehead graph of $V$, that is,
  \mbox{$s_{[1{\uparrow}(4g+2p)]} = t_{[1{\uparrow}  p]}^{\pm 1} \cup   x_{[1{\uparrow}  g]}^{\pm 1} \cup
y_{[1{\uparrow}g]}^{\pm 1}$}    and
$ \{(s_{\ell}{\rightsquigarrow}s_{\ell +1})\mid \ell \in [1{\uparrow} (4g{+}2p{-}1)] \} $ equals
$  \{(\overline t_j{\rightsquigarrow}t_j) \mid j \in [1{\uparrow}p]\}\,\, \cup\,\,
\{(v_k{\rightsquigarrow}\overline v_{k+1})
\mid k \in [1{\uparrow}(4g{+}p{-}1)]\}.$

We assume that there exists some $\ell \in [1{\uparrow}(4g{+}2p)]$ such that
$\vert s_{\ell}^\phi \vert > 1$, and we shall obtain a contradiction.
Let $s_{\ell}^\phi$ end in  $b \in \mathfrak{B}{\cup}\mathfrak{B}^{-1}$.
Assume further that $\ell$ has been chosen to minimize $\overline b$
in  $(\mathfrak{B}{\cup}\mathfrak{B}^{-1}, \le)$.  Assume further that
$\ell$ has been chosen maximal.  In particular, if $\vert s_{\ell+1}^\phi  \vert > 1$,
then $s_{\ell+1}^\phi$ does not end in $b$.

Recall that $(s_{\ell}{\rightsquigarrow}s_{\ell+1})$ can be expressed  either as
 $(\overline t_j{\rightsquigarrow}t_j)$
or as $(v_k{\rightsquigarrow}\overline v_{k+1})$, possibly a ghost edge.
If $(s_{\ell}{\rightsquigarrow}s_{\ell+1}) = (\overline t_j{\rightsquigarrow}t_j)$, then
 $\vert s_{\ell+1}^\phi  \vert =  \vert s_{\ell}^\phi  \vert> 1$, and, also,
$s_{\ell+1}^\phi$ ends in $b$.  This is a contradiction.
Thus, we may assume that
 $(s_{\ell}{\rightsquigarrow}s_{\ell+1}) =  (v_k{\rightsquigarrow}\overline v_{k+1})$,
possibly with $k= 4g{+}p$.  Then $s_{\ell}^\phi =    v_k^\phi  =  A_{k-1} w_k \overline A_k $
and $A_{k-1} <   A_k\overline w_k$.

We claim that $A_k = 1$. Suppose not. Then   $k < 4g{+}p$ and, also, $\overline A_k$ ends in $b$.  Now
$ \overline  s_{\ell+1}^\phi = v_{k+1}^\phi = A_{k} w_{k+1} \overline A_{k+1}$.
Thus $\vert s_{\ell+1}^\phi\vert > 1$ and $s_{\ell+1}^\phi$ ends in $b$.  This is a
contradiction.  Hence $A_k=1$.

Now,  $s_{\ell}^\phi =     A_{k-1} w_k \overline A_k =  A_{k-1} w_k$.  Here, $w_k = b$ and, also,
$A_{k-1} \ne 1$.
Now,  \mbox{$A_{k-1} <   A_k\overline w_k =\overline w_k = \overline b$}.
Thus,  $A_{k-1} \in \mathfrak{B}{\cup}\mathfrak{B}^{-1}$.
Write $a \coloneq \overline  A_{k-1}\in \mathfrak{B}{\cup}\mathfrak{B}^{-1}$.
Then $s_{\ell}^\phi =  \overline a b$ and $\overline a < \overline b$.
There exists some $\ell'\in [1{\uparrow}(4g{+}2p)]$ such that
$s_{\ell'} = \overline s_{\ell}$.  Then $ s_{\ell'}^\phi = \overline b  a$  and $\overline a < \overline b$.
This contradicts the minimality of $\overline b$.

We have now shown that $\phi$  permutes the $t$-letters and maps the $x$-letters to letters.
It follows from (e)  that   $\phi$ permutes the $x$-letters.
Hence, $\phi$
gives a $\N_1$ edge in $\mathcal{N}_{g,p}$.

This completes the proof of Claim 2 and the proof of the theorem.  \qed
\end{proof}

  Theorem~\ref{thm:z}  combines
 Zieschang's approach~\cite[Section~5.2]{ZVC}
and McCool's approach~\cite[Lemma~3.2]{DF2}.
Zieschang does not use  Whitehead graphs explicitly and McCool does not use   $\N_3$ edges explicitly.
For Claim~1, the ingenious pre-order and the proof of Case 1 go  back to Nielsen~\cite{Nielsen1}, and the
proof of Case~2  goes back to Artin~\cite{A}.
The proof of Claim~2 goes back to Whitehead~\cite{W}.
Zieschang  refers to Nielsen~\cite{Nielsen1} for the proof  of his version of Claim 1   and
gives a long proof of his version of Claim 2.
McCool uses results of Whitehead~\cite{W} for the proof of  his version of Theorem~\ref{thm:z}.

We shall be interested in  five  special cases.

In Theorem~\ref{thm:z}, we can take $H = 1$  and take
$(V  {\xrightarrow{\phi}}W) \in \mathcal{Z}_{g,p}$ to see the following.

\begin{consequence}\label{cons:z1}   $\mathcal{Z}_{g,p}$ is generated by  $\mathcal{N}_{g,p}$.
\hfill\qed
\end{consequence}

In Theorem~\ref{thm:z}, we can take $H = 1$
and take $\phi$ to be an automorphism to obtain
the following weak form of
results of   Whitehead.

\begin{consequence}\label{cons:z1b}  For
$V \in  \operatorname{V}\!\!\mathcal{Z}_{g,p}$ and $\phi \in \Stab([t]_{[1{\uparrow}p]}; \Aut F_{g,p})$,
if   $\vert  V^\phi \vert \le 4g{+}p$, then
$ V^\phi \in \operatorname{V}\!\!\mathcal{Z}_{g,p}$.
\hfill\qed
\end{consequence}

It is a classic result of Nielsen~\cite{Nielsen1} that every surjective endomorphism of
a finite-rank free group is an automorphism, and his proof is the basis of the above proof of Claim~1.
A special case of this classic result will be used later in
reviewing a proof of another result of Nielsen, Theorem~\ref{thm:niel},  and to make our
exposition self-contained, we now note that we have proved the desired special case.
We have also proved one of Zieschang's results concerning
injective endomorphisms being automorphisms.

In Theorem~\ref{thm:z}, we can take  $H = 1$ to obtain the following.

\begin{consequence}\label{cons:z2a} Suppose that $\phi$
is an  endomorphism  of
$F_{g,p}$  such that $\phi$ is surjective  or injective, and
such that $\phi$ fixes $\Pi t_{\bbl1 p{\downarrow}1\bbr1 }\Pi [x ,y ]_{\bbl1 1{\uparrow}g \bbr1 }$
and such that there exists some permutation $\pi$  of $[1{\uparrow}p]$ such that,
for each \mbox{$j \in [1{\uparrow}p]$}, $t_j^\phi$ is   $F_{g,p}$-conjugate to~$t_{j^\pi}$.
Then  $\phi$ is an automorphism.
\hfill\qed
 \end{consequence}

\begin{consequence}\label{cons:z2b}   Suppose that $p \ge 1$.

Let us identify $F_{g,p} = H {\ast} F_{g,p-1}$ where
$H \coloneq \gp{t_p}{\quad}$.

 Let
 $V  \coloneq \Pi t_{\bbl1 (p-1){\downarrow}1\bbr1 }\Pi [x ,y ]_{\bbl1 1{\uparrow}g \bbr1 }
\in F_{g,p-1}$ and $\phi \in  \Stab(V; \operatorname{\normalfont A}_{g,p})=\Stab(t_{p}; \operatorname{\normalfont A}_{g,p})$.
By Theorem~\ref{thm:z}, $\phi$ acts as an automorphism $\phi'$ on $F_{g,p-1}$
and $\phi'$ lies in $\operatorname{\normalfont A}_{g,p-1}$.

Thus, we have a natural isomorphism $\Stab(t_{p}; \operatorname{\normalfont A}_{g,p}) \xrightarrow{\null_\sim}
\operatorname{\normalfont A}_{g,p-1}$, $\phi \mapsto \phi'$.
\hfill\qed
\end{consequence}

\begin{consequence}\label{cons:z2c}
  Suppose that $p = 0$ and $g \ge 1$.

Let us identify $F_{g,0} = H {\ast} K$ where
\mbox{$H \coloneq \gp{x_1}{\quad}$} and
\mbox{$K \coloneq \gp{y_{[1{\uparrow} g]} \cup x_{[2{\uparrow} g]}}{\quad}$}.
We have an isomorphism
 \mbox{$K\xrightarrow{\null_\sim} F_{g{-}1, 1}$}  with
\mbox{$y_1 \mapsto t_1$},
and, for each $i \in [2{\uparrow}g]$, \mbox{$x_{i} \mapsto x_{i-1}$}, $y_{i} \mapsto y_{i-1}$.

Let $V \coloneq     y_1\Pi [x ,y ]_{\bbl1 2{\uparrow}g \bbr1 }$ and
 $\phi \in  \Stab(V; \operatorname{\normalfont A}_{g,0})=  \Stab(\overline x_1 \overline y_1 x_1; \operatorname{\normalfont A}_{g,0})$.
 Then  $\phi$ stabilizes the $F_{g,0}$-conjugacy class $[y_1]$.
By Theorem~\ref{thm:z}, $\phi$  acts as an automorphism on  $K$  such that the induced action on
 $F_{g{-}1,1} $  is an element
$\phi'$ of $\operatorname{\normalfont A}_{g{-}1, 1}$.

Then we have a homomorphism $\Stab(\overline x_1 \overline y_1 x_1; \operatorname{\normalfont A}_{g,0}) \to \operatorname{\normalfont A}_{g{-}1, 1}$,
 $\phi \mapsto \phi'$.
It is easily seen that this map is surjective, and that the kernel is generated by
\mbox{$\alpha_1 \coloneq  \bigl(\begin{smallmatrix}
x_1 \\   \overline y_1x_1
\end{smallmatrix}\bigr)$}.
Thus, we have an isomorphism
$\Stab(\overline x_1 \overline y_1 x_1; \operatorname{\normalfont A}_{g,0})
\simeq \gp{\alpha_1}{\quad} \times \operatorname{\normalfont A}_{g{-}1, 1}.$
\hfill\qed
\end{consequence}

\section{The canonical edges in the Zieschang groupoid}
\label{sec:mc}

In this section, we develop   methods introduced by McCool in~\cite{Mc}.
We define the canonical edges in
$\mathcal{Z}_{g,p}$
and use them to find a special generating set for    $\operatorname{\normalfont A}_{g,p}$.

Throughout this section, all products $AB$
are understood to be  without cancellation; any product  where cancellation might be possible
 will be   written  as  $A{\circ}B$.  Upper-case letters will be used to denote elements of $F_{g,p}$,
and lower-case letters will be used  to denote $t$-letters  and $x$-letters.

\begin{definitions}\label{defs:phi}   Let $g$, $p \in [0{\uparrow}\infty[$\,,
let $F_{g,p} \coloneq \gp{t_{[1 \uparrow p]} \cup x_{[1 \uparrow g ]} \cup  y_{[1 \uparrow g ]}}{\quad}$,
and let $V \in \operatorname{V}\!\!\mathcal{Z}_{g,p}$.
We shall now recursively
construct a path in  $\mathcal{Z}_{ g,p }$ from $V$ to
  \mbox{$ \Pi t_{\bbl1 p{\downarrow} 1 \bbr1 }\Pi [x,y]_{\bbl1 1{\uparrow} g \bbr1 }$.}
In particular, $\mathcal{Z}_{g,p}$ is connected.
At each step,  we specify an automorphism and  tacitly apply
Consequence~\ref{cons:z1b} to see that we have  an
edge in~$\mathcal{Z}_{g,p}$.

\medskip

\begin{enumerate}[(i).]
\item If $p \ge 1$  and   $V = Pt_jQ$  where $t_j$ is the first $t$-letter which occurs in $V$ and
$P \ne 1$, then we travel along the   edge \newline
$ Pt_jQ \xrightarrow{\left( \begin{smallmatrix}
t_j \\    t_j^{  P}
\end{smallmatrix}\right)} t_j PQ.$
\item If $p \ge 1$  and   $V =  t_jP$    and
$j \ne p$, then we travel along the  edge
\newline
$ t_jP \xrightarrow{\left( \begin{smallmatrix}
t_j && t_p \\    t_p && t_j
\end{smallmatrix}\right)}  t_pP'.$
\item If $j \in [2{\uparrow}p]$ and $V$ begins with $\Pi t_{\bbl1 p{\downarrow}(j{+}1)  \bbr1 }$
but not with
$\Pi t_{\bbl1 p{\downarrow} j \bbr1 }$, then we proceed analogously to steps (i) and (ii).
\item If $g \ge 1$ and $V = \Pi t_{\bbl1 p{\downarrow} 1 \bbr1 }aP\overline a Q$ where
$a$ is an $x$-letter and $a \ne \overline x_1$, then
we travel along the   edge \newline
 $\Pi t_{\bbl1 p{\downarrow} 1 \bbr1 } aP\overline a Q \xrightarrow{\left( \begin{smallmatrix}
a && x_1 \\    \overline x_1 && \overline a
\end{smallmatrix}\right)} \Pi t_{\bbl1 p{\downarrow} 1 \bbr1 } \overline x_1 P' x_1 Q'.$

\item Suppose that $g \ge 1$ and $V = \Pi t_{\bbl1 p{\downarrow} 1 \bbr1 }\overline x_1 P x_1 Q $
and $\vert P \vert \ge 2$.   If the set of letters
which occur in $P$ were closed under taking inverses, then the extended Whitehead graph of $V$ would have a cycle
\mbox{$ \overline {P_{\text{first}}} {\rightsquigarrow} \cdots{\rightsquigarrow}
 P_{\text{last}} {\rightsquigarrow} \overline x_1  {\rightsquigarrow} \overline {P_{\text{first}}}$},
which is a contradiction.
Let $b$ denote the first letter  that occurs in $P$  such that $\overline b$ occurs in $Q$.
We write  $P = P_1bP_2$ and $Q=Q_1 \overline b Q_2$,  and
we travel along the edge   \newline
 $\Pi t_{\bbl1 p{\downarrow} 1 \bbr1 }\overline x_1 P_1  b P_2 x_1 Q_1 \overline b Q_2
\xrightarrow{\left( \begin{smallmatrix}
b \\    \overline P_1 b \overline P_2
\end{smallmatrix}\right)}
\Pi t_{\bbl1 p{\downarrow} 1 \bbr1 } \overline x_1 b x_1 Q_1 P_2 \overline b P_1 Q_2.$
\item If $g\ge 1$  and
$V = \Pi t_{\bbl1 p{\downarrow} 1 \bbr1 }\overline x_1 b x_1 P \overline b Q$
where $b$ is an $x$-letter and $b \ne \overline y_1$, then we travel along
 the   edge \newline
 $\Pi t_{\bbl1 p{\downarrow} 1 \bbr1 }\overline x_1 b x_1 P \overline b Q
\xrightarrow{\left( \begin{smallmatrix}
b && y_1 \\    \overline y_1 && \overline b
\end{smallmatrix}\right)}
\Pi t_{\bbl1 p{\downarrow} 1 \bbr1 }\overline x_1 \overline y_1 x_1 P'  y_1 Q'.$
\item Suppose that $g\ge 1$ and  $V = \Pi t_{\bbl1 p{\downarrow} 1 \bbr1 }\overline x_1 \overline y_1 x_1 P   y_1 Q$
and $P \ne 1$.
Here the
extended Whitehead graph of  $V$ has the form\newline
$\overline t_p {\rightsquigarrow}t_p {\rightsquigarrow}{\cdot\cdot}{\rightsquigarrow}
\overline t_1{\rightsquigarrow} t_1 {\rightsquigarrow} x_1 {\rightsquigarrow}
 \overline{P_{\text{\,first}}} {\rightsquigarrow} {\cdots}{\rightsquigarrow} P_{\text{\,last}} {\rightsquigarrow} \overline y_1
{\rightsquigarrow}\overline x_1 {\rightsquigarrow} y_1 {\rightsquigarrow} \overline{Q_{\text{\,first}}}
 {\rightsquigarrow}{\cdots}{\rightsquigarrow} Q_{\text{\,last}}.$\newline
Let $\phi$ denote the (Whitehead) automorphism of $F_{g,p}$ such that, for each
letter~$u$, \newline
$u^\phi \coloneq y_1^{\operatorname{Truth}(\overline u \text{ appears in }
\overline{P_{\text{\,first}}} {\rightsquigarrow}{\cdots}{\rightsquigarrow}P_{\text{\,last}})}
\circ u\circ
\overline y_1^{\operatorname{Truth}( u \text{ appears in }
\overline{P_{\text{\,first}}} {\rightsquigarrow}{\cdots}{\rightsquigarrow} P_{\text{\,last}})} $ \newline
where $\operatorname{Truth}(-)$ assigns the value $1$ to true statements and the value $0$ to false statements.
Then $\phi$ stabilizes each $t$-letter  and $x_1$ and $y_1$.
For all but two edges $(v_k{\rightsquigarrow}\overline v_{k+1})$,
the right multiplier for $v_k$ equals the right multiplier for $\overline v_{k+1}$, that is,
the inverse of the left multiplier for $v_{k+1}$.
The two exceptional edges are $x_1{\rightsquigarrow}\overline{P_{\text{\,first}}}$ and $P_{\text{\,last}}{\rightsquigarrow}\overline y_1$.
It follows that   $Q^\phi = Q$  and $P^\phi = y_1 P \overline y_1$. We travel along the   edge  \newline
 $\Pi t_{\bbl1 p{\downarrow} 1 \bbr1 }\overline x_1 \overline y_1 x_1 P y_1 Q
\xrightarrow{\phi}
\Pi t_{\bbl1 p{\downarrow} 1 \bbr1 } [x_1,y_1] PQ.$ \vspace{1mm}
\item If $i \in [2{\uparrow}g]$ and $V$ begins with
$\Pi t_{\bbl1 p{\downarrow} 1 \bbr1 }\Pi [x,y]_{\bbl1 1{\uparrow} (i{-}1) \bbr1 }$ but $V$
does not begin with
$\Pi t_{\bbl1 p{\downarrow} 1 \bbr1 }\Pi [x,y]_{\bbl1 1{\uparrow} i \bbr1 }$,
then we proceed analogously to steps (iv)--(vii).
\end{enumerate}

\medskip

 The foregoing procedure specifies
a path in $\mathcal{Z}_{g,p}$ from $V$
 to $\Pi t_{\bbl1 p{\downarrow} 1 \bbr1 }\Pi [x,y]_{\bbl1 1{\uparrow} g \bbr1 }$,
and, hence, a \textit{canonical edge} in  $\mathcal{Z}_{g,p}$, denoted
$$ V \xrightarrow{\Phi_V}  \Pi t_{\bbl1 p{\downarrow} 1 \bbr1 }\Pi [x,y]_{\bbl1 1{\uparrow} g \bbr1 }.$$
We understand that $\Phi_{\Pi t_{\bbl1 p{\downarrow} 1 \bbr1 }\Pi [x,y]_{\bbl1 1{\uparrow} g \bbr1 }}$ is the identity map.
 The only information about $\Phi_V$ that we shall
need is that  the following hold; all of these assertions can be seen from the construction.

\begin{enumerate}[(\ref{defs:phi}.1)]
\item If $p=0$ and $g \ge 1$ and $V =  a P \overline a Q$, then
$\Phi_V$ sends $a$ to $\overline x_1$, and $P$ to $\overline y_1$.
\item If $p=1$  and $V = Pt_{1} Q  = (t_{1}^{\overline P})\circ(PQ)$, then
$\Phi_V$ sends $t_{1}^{\overline P}$ to $t_1$.
\item If $p=1$ and $g \ge 1$ and $V =  t_{1} a P \overline a Q$, then
$\Phi_V$ sends $t_{ 1} $ to $t_1$,
$a$ to $\overline x_1$,  and  $P$ to $\overline y_1$.
\item If $p \ge 2$ and $V = Pt_{j_1}Qt_{j_2}R =
 (t_{j_1}^{\overline P}) {\circ} (
 t_{j_2}^{\overline Q\, \overline P}) {\circ}  (PQR)$,
 and no $t$-letters occur in~$P$ ~or~$Q$, then
$\Phi_V$ sends $t_{j_1}^{\overline P}$ to $t_p$, and $t_{j_2}^{\overline Q\, \overline P}$ to $t_{p-1}$.
\end{enumerate}
\vskip-5mm \hfill\qed
\end{definitions}

\medskip

\begin{remark}~\label{rem}  We shall be given a special subset $A'$ of $\operatorname{\normalfont A}_{g,p}$
that we wish to show generates~$\operatorname{\normalfont A}_{g,p}$.
We view $\operatorname{\normalfont A}_{g,p}$ as the set of edges of $\mathcal{Z}_{g,p}$
from
$ \Pi t_{\bbl1 p{\downarrow} 1 \bbr1 }\Pi [x,y]_{\bbl1 1{\uparrow} g \bbr1 }$
to itself,
and we let  $\mathcal{Z}_{g,p}'$  denote the
subgroupoid of $\mathcal{Z}_{g,p}$ generated by the edges in  $A'$ together with all the
canonical edges of $\mathcal{Z}_{g,p}$.
Using methods introduced by \mbox{McCool \cite{Mc}}, we shall prove that   $\mathcal{Z}_{g,p}'$
contains the Nielsen subgraph $\mathcal{N}_{g,p}$ of $\mathcal{Z}_{g,p}$.
By Consequence~\ref{cons:z1},
$\mathcal{Z}_{g,p}' =  \mathcal{Z}_{g,p}$.  Now when any
edge in $ \operatorname{\normalfont A}_{g,p}$
is expressed as a product of canonical edges and edges in~$A'$ and their inverses,
then  the nontrivial canonical edges and their inverses must pair off
and  cancel  out,
and we are left with an expression that involves no nontrivial canonical edges.
Here, $\operatorname{\normalfont A}_{g,p}$ is generated by~$A'$.
\hfill\qed
 \end{remark}

\begin{theorem}\label{thm:p0} Let $g \in [1{\uparrow}\infty[$\,, $p=0$.
Then the group $\operatorname{\normalfont A}_{g,0}$
is generated by
$\Stab(\overline x_1 \overline y_1 x_1; \operatorname{\normalfont A}_{g,0})
\cup \{\beta_1\}$, where $\beta_1 \coloneq \bigl(\begin{smallmatrix}
y_1 \\    x_1y_1
\end{smallmatrix}\bigr)$.

\end{theorem}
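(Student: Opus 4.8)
The plan is to use the canonical-edge machinery set up in Remark~\ref{rem}, specialized to the case $p=0$. We take $A' \coloneq \Stab(\overline x_1 \overline y_1 x_1; \operatorname{\normalfont A}_{g,0}) \cup \{\beta_1\}$, and we let $\mathcal{Z}_{g,0}'$ be the subgroupoid of $\mathcal{Z}_{g,0}$ generated by the edges in $A'$ together with all canonical edges $\Phi_V$. By Remark~\ref{rem}, it suffices to prove that $\mathcal{Z}_{g,0}'$ contains every Nielsen edge in $\mathcal{N}_{g,0}$; then Consequence~\ref{cons:z1} gives $\mathcal{Z}_{g,0}' = \mathcal{Z}_{g,0}$, and the cancellation argument in Remark~\ref{rem} yields that $\operatorname{\normalfont A}_{g,0}$ is generated by $A'$.

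So the work is to show each Nielsen edge $V \xrightarrow{\alpha} U$ lies in $\mathcal{Z}_{g,0}'$. The standard trick is to factor it as $V \xrightarrow{\Phi_V} V_0 \xrightarrow{\Phi_V^{-1}\alpha\Phi_U} V_0 \xrightarrow{\Phi_U^{-1}} U$, where $V_0 \coloneq \Pi t_{\bbl1 0{\downarrow}1\bbr1}\Pi[x,y]_{\bbl1 1{\uparrow}g\bbr1} = \Pi[x,y]_{\bbl1 1{\uparrow}g\bbr1}$ (so here $V_0 = \overline x_1\overline y_1 x_1 y_1 \cdots$). Since $\Phi_V$ and $\Phi_U$ are canonical edges, they lie in $\mathcal{Z}_{g,0}'$ by construction, so the edge lies in $\mathcal{Z}_{g,0}'$ iff the middle loop $\Phi_V^{-1}\alpha\Phi_U \in \operatorname{\normalfont A}_{g,0}$ lies in the subgroup generated by $A'$. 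One then analyzes the few types of Nielsen edge ($\N_1$, left/right $\N_2$; there are no $t$-letters, hence no $\N_3$ edges for $p=0$) and, using the explicit description of $\Phi_V$ on the relevant segments — in particular properties (\ref{defs:phi}.1) — identifies each resulting loop. The key observation to exploit is that $\beta_1$ takes $V_0 = \overline x_1\overline y_1 x_1 y_1\cdots$ to a Zieschang element whose canonical edge $\Phi$ provides exactly the needed ``rotation'' moving $x_1 \rightsquigarrow \overline y_1$ past $x_1$; composing $\beta_1$ with a canonical edge produces a loop that fixes $\overline x_1 \overline y_1 x_1$, hence lies in $A'$, and conversely a general Nielsen loop can be rewritten, after inserting $\beta_1^{\pm 1}$ and canonical edges, as a product of an element of $\Stab(\overline x_1\overline y_1 x_1;\operatorname{\normalfont A}_{g,0})$ and powers of $\beta_1$. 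More structurally: every loop at $V_0$ that fails to stabilize $\overline x_1\overline y_1 x_1$ can only move that subword to another cyclically equivalent initial segment of $V_0$, and $\beta_1$ (together with canonical edges) realizes the generator of that ``orbit'' action, so modulo $\langle\beta_1\rangle$ every loop returns to $\Stab(\overline x_1\overline y_1 x_1;\operatorname{\normalfont A}_{g,0})$.

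The main obstacle I expect is the bookkeeping in the $\N_2$ cases when the $x$-letter $v_k$ being slid is one of $x_1$, $\overline x_1$, $y_1$, $\overline y_1$ — i.e. when the Nielsen move interacts with the distinguished ``first handle'' $\overline x_1\overline y_1 x_1 y_1$ that $A'$ is tailored to. For a generic $x$-letter $v_k$ deep inside $V$, the canonical edge $\Phi_V$ simply transports the $\N_2$ move to an $\N_2$-type loop supported away from $\overline x_1\overline y_1 x_1$, which visibly lies in $\Stab(\overline x_1\overline y_1 x_1;\operatorname{\normalfont A}_{g,0}) \subseteq A'$; but when $v_k$ is in the first handle, one must carefully track how $\Phi_V$ normalizes the move and argue that the discrepancy is precisely a power of $\beta_1$. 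I would handle this by choosing $V$ in each case to be a convenient representative near $V_0$ (using that canonical edges let us reduce to such $V$), writing out $\Phi_V$ explicitly from steps (iv)--(vii) of Definitions~\ref{defs:phi}, and verifying the identity $\Phi_V^{-1}\alpha\Phi_U = \beta_1^{\pm 1}\cdot(\text{element fixing }\overline x_1\overline y_1 x_1)$ by a direct computation on the basis $t_{[1{\uparrow}0]}\cup x_{[1{\uparrow}g]}\cup y_{[1{\uparrow}g]}$. Once all Nielsen-edge types are dispatched, the proof concludes by the Remark~\ref{rem} mechanism.
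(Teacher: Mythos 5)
You have the right scaffolding (Remark~\ref{rem}, Consequence~\ref{cons:z1}, conjugating each Nielsen edge by canonical edges to a loop at $\Pi [x,y]_{\bbl1 1{\uparrow} g \bbr1 }$, no $\N_3$ edges since $p=0$), but the heart of the proof is missing, and your diagnosis of where the difficulty sits is essentially backwards. Writing $V=a_1P\overline a_1Q$ with $a_1$ the \emph{first letter of $V$} (which need not be any of $x_1^{\pm1},y_1^{\pm1}$ --- your identification of the special cases by basis letters conflates the two), it is not true that every $\N_2$ move supported away from $a_1$ transports to a loop fixing $\overline x_1\overline y_1x_1$. For example, a right $\N_2$ move that absorbs the letter $\overline a_1$, or one that transfers a letter across the occurrence of $\overline a_1$, changes the decomposition $a_1P\overline a_1Q$, and then property (\ref{defs:phi}.1) only tells you that $\Phi_V$ and $\Phi_U$ carry the first letters to $\overline x_1$: the transported loop is guaranteed to fix $\overline x_1$, but its effect on $\overline x_1\overline y_1x_1$ involves the $\Phi_U$-image of a proper sub-segment of $V$, which is not determined by (\ref{defs:phi}.1) and is not realistically computable from steps (iv)--(vii) for arbitrary $V$. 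So your plan to ``verify $\Phi_V^{-1}\alpha\Phi_U=\beta_1^{\pm1}\cdot(\text{element fixing }\overline x_1\overline y_1x_1)$ by direct computation'' has no available computation behind it, and the structural claim you lean on (that a loop at the base vertex can only move $\overline x_1\overline y_1x_1$ to a cyclically equivalent initial segment, with $\beta_1$ generating that orbit) is unjustified and false for general loops (already $\beta_1^2$ sends $\overline x_1\overline y_1x_1$ to $\overline x_1\overline y_1\overline x_1$).

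What your proposal lacks is the one identity that makes the theorem go and is the only place $\beta_1$ is really used: since $\alpha_1\in\Stab(\overline x_1\overline y_1x_1;\operatorname{\normalfont A}_{g,0})$ (Consequence~\ref{cons:z2c}) and $(\overline x_1\overline y_1x_1)^{\beta_1\alpha_1}=\overline x_1$, the whole subgroup $\Stab(\overline x_1;\operatorname{\normalfont A}_{g,0})$ lies in the subgroup generated by $\Stab(\overline x_1\overline y_1x_1;\operatorname{\normalfont A}_{g,0})\cup\{\beta_1\}$. With that in hand the case analysis runs opposite to yours: every Nielsen move fixing the first letter $a_1$ (including the boundary-crossing ones above, and all $\N_1$ moves) transports to a loop fixing $\overline x_1$, hence is admissible; the only moves that move $a_1$ are, up to inverses, $\bigl(\begin{smallmatrix} a \\ a\overline b \end{smallmatrix}\bigr)$ on $abP\overline aQ$ and $\bigl(\begin{smallmatrix} b \\ \overline a b \end{smallmatrix}\bigr)$ on $abP\overline bQ$, and the first transports, again by (\ref{defs:phi}.1), to a loop fixing $\overline x_1\overline y_1x_1$, while the second factors through the first followed by a first-letter-preserving edge. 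Until you prove $\Stab(\overline x_1;\operatorname{\normalfont A}_{g,0})$ lies in the subgroup generated by your set (or supply some substitute mechanism for the loops that fix only $\overline x_1$), the argument does not close.
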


\begin{proof}  Let $\mathcal{Z}_{g,0}'$  denote the
subgroupoid of $\mathcal{Z}_{g,0}$ generated by  the given set together with all the
canonical edges.    By Remark~\ref{rem}, it suffices to show that   $\mathcal{N}_{g,0} \subseteq \mathcal{Z}_{g,0}'$.

Recall that   $\alpha_1 \coloneq \bigl(\begin{smallmatrix}
x_1 \\   \overline y_1x_1
\end{smallmatrix}\bigr)   \in \Stab(\overline x_1 \overline y_1 x_1; \operatorname{\normalfont A}_{g,0}) \subseteq \mathcal{Z}_{g,0}'$.
In $F_{g,0}$, $(\overline x_1 \overline y_1 x_1)^{\beta_1\alpha_1}
= (\overline x_1 \overline y_1)^{\alpha_1} = \overline x_1$.
Hence  $\Stab(\overline x_1; \operatorname{\normalfont A}_{g,0}) \subseteq \mathcal{Z}_{g,0}'$.
Thus $\mathcal{Z}_{g,0}'$ contains all the edges of the forms

\bigskip

\noindent $(\M\text{I}.1) \colon   V \in  \operatorname{V}\!\!\mathcal{Z}_{g,0}
\xrightarrow{ \Phi_V}
  \Pi [x,y]_{\bbl1 1{\uparrow} g \bbr1 }$,

\bigskip

\noindent
$(\M\text{I}.2)  \colon \Pi [x,y]_{\bbl1 1{\uparrow} g \bbr1 }
\xrightarrow{\text{map in $\operatorname{\normalfont A}_{g,0}$ that stabilizes $\overline x_1$ or
$\overline x_1 \overline y_1 x_1$}}
\Pi [x,y]_{\bbl1 1{\uparrow} g \bbr1 }.$

\bigskip

We next  describe two more families of edges in $\mathcal{Z}_{g,0}'$,
expressed as products of edges of types $(\M\text{I}.1)$ and $(\M\text{I}.2)$ and their inverses.

\begin{figure}[H]
\begin{center}
\setlength{\unitlength}{1cm}
\begin{picture}(12.5,1.9)
\put(0,1.7){\makebox(0,0)[l] {$(\M\text{I}.3) \colon a_1P_1  \in   \operatorname{V}\!\!\mathcal{Z}_{g,0} $}}
\put(3.2,1.7){\vector(1,0){5.4}}
\put(3.5,1.9){\makebox(0,0)[l] {$\scriptstyle \text{map in $\Aut F_{g,0}$ with }  a_1 \mapsto a_2,\,
P_1 \mapsto   P_2 $}}
\put(8.8,1.7){\makebox(0,0)[l] {$a_2 P_2  \in  \operatorname{V}\!\!\mathcal{Z}_{g,0}$}}
\put(1,1.5){\vector(0,-1){1}}
\put(1.1,1.0){\makebox(0,0)[l]{$\scriptstyle (\M\text{I}.1)  \overset{(\ref{defs:phi}.1) }{\Rightarrow}  ( a_1 \mapsto \overline x_1)    $}}
\put(8.9,1.5){\vector(0,-1){1}}
\put(9,1.0){\makebox(0,0)[l]{$\scriptstyle (\M\text{I}.1)  \overset{(\ref{defs:phi}.1) }{\Rightarrow}  (  a_2 \mapsto \overline x_1 )    $}}
\put(.9,.2){\makebox(0,0)[l]{$\Pi [x,y]_{\bbl1 1{\uparrow} g \bbr1 } $}}
\put(8.8,.2){\makebox(0,0)[l]{$\Pi [x,y]_{\bbl1 1{\uparrow} g \bbr1 } $}}
\put(2.7,.2){\vector(1,0){6}}
\put(2.8,0){\makebox(0,0)[l]{$ \scriptstyle   \text{map  making square commute }  \Rightarrow
 (\overline x_1 \mapsto  \overline x_1)  \Rightarrow \, (\M\text{I}.2)$}}
\end{picture}
\end{center}
\end{figure}

\begin{figure}[H]
\begin{center}
\setlength{\unitlength}{1cm}
\begin{picture}(12.5,1.9)
\put(0,1.7){\makebox(0,0)[l] {$(\M\text{I}.4) \colon
abP \overline a  Q   \in \operatorname{V}\!\!\mathcal{Z}_{g,0} $}}
\put(3.8,1.7){\vector(1,0){5.2}}
\put(4.7,2.0){\makebox(0,0)[l] {$\scriptstyle \left(\begin{smallmatrix}
a \\
a \overline b
\end{smallmatrix}\right) \Rightarrow (abP  \overline a\, \,\mapsto \,\, aP b \overline a)$}}
\put(9.1,1.7){\makebox(0,0)[l] {$a  P b \overline  a  Q   \in  \operatorname{V}\!\!\mathcal{Z}_{g,0}$}}
\put(1.0,1.5){\vector(0,-1){1}}
\put(1.1,1.0){\makebox(0,0)[l]{$\scriptstyle (\M\text{I}.1)   \overset{(\ref{defs:phi}.1) }{\Rightarrow}   (abP \overline a
\,\mapsto \,\overline x_1\overline y_1 x_1 )   $}}
\put(9.1,1.5){\vector(0,-1){1}}
\put(9.2,1.0){\makebox(0,0)[l]{$\scriptstyle (\M\text{I}.1)   \overset{(\ref{defs:phi}.1) }{\Rightarrow}
( a P b\overline a  \,\mapsto \,\overline x_1\overline y_1 x_1 )  $}}
\put(0.9,.2){\makebox(0,0)[l]{$\Pi [x,y]_{\bbl1 1{\uparrow} g \bbr1 } $}}
\put(9.0,.2){\makebox(0,0)[l]{$\Pi [x,y]_{\bbl1 1{\uparrow} g \bbr1 } $}}
\put(2.7,.2){\vector(1,0){6.2}}
\put(2.75,0){\makebox(0,0)[l]{$ \scriptstyle   \text{make  square commute } \Rightarrow
 ( \overline x_1\overline y_1 x_1 \mapsto  \overline x_1\overline y_1 x_1)  \Rightarrow \, (\M\text{I}.2)$}}
\end{picture}
\end{center}
\end{figure}

We then have the family

\noindent $(\M\text{I}.5) \colon
abP \overline b Q   \in \operatorname{V}\!\!\mathcal{Z}_{g,0}  \xrightarrow{\left(\begin{smallmatrix}
b \\
\overline a  b
\end{smallmatrix}\right)} b P  \overline b a Q  \in \operatorname{V}\!\!\mathcal{Z}_{g,0},$

\medskip

\noindent since, here, we have the factorization

\medskip

\noindent \mbox{$abP \overline b Q    \xrightarrow{ \left(\begin{smallmatrix}
a \\
a \overline b
\end{smallmatrix}\right) \Rightarrow (\M\text{I}.4)}aP' \overline b Q'
 \xrightarrow{ \text{ makes triangle commute } \Rightarrow (a \mapsto b)\Rightarrow (\M\text{I}.3)}
 b P  \overline b a Q$.}

\medskip

 It can be seen that the edges of type $(\M\text{I}.3)$  include all the $\N_1$ edges in $\mathcal{Z}_{g,0}$, and also all
the $\N_2$ edges in $\mathcal{Z}_{g,0}$ that do not involve $a_1$.
The remaining $\N_2$ edges in $\mathcal{Z}_{g,0}$ are of type  $(\M\text{I}.4)$ or   $(\M\text{I}.5)$ or their inverses.
 Since $p=0$, there
are no $\N_3$ edges.
We have now shown that    $\mathcal{N}_{g,0} \subseteq \mathcal{Z}_{g,0}'$, as desired.
\end{proof}

\begin{theorem}\label{thm:p1} Let $g \in [1{\uparrow}\infty[$\,, $p=1$.
Then  the group $\operatorname{\normalfont A}_{g,1}$
is generated by
$\Stab(t_1; \operatorname{\normalfont A}_{g,1})
\cup \{\gamma_1\}$ where $\gamma_1 \coloneq \bigl(\begin{smallmatrix}
t_1 & & x_1  \\  \\ t_1^{w_1} & & x_{1}w_{1}
\end{smallmatrix}\bigr)$ with \mbox{$w_1 \coloneq  t_1 \overline y_1^{x_1}$}.
\end{theorem}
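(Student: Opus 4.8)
The plan is to run the argument of Theorem~\ref{thm:p0} in parallel, using Remark~\ref{rem}. Let $\mathcal{Z}_{g,1}'$ denote the subgroupoid of $\mathcal{Z}_{g,1}$ generated by $\Stab(t_1;\operatorname{\normalfont A}_{g,1})\cup\{\gamma_1\}$ together with all the canonical edges; it suffices to prove $\mathcal{N}_{g,1}\subseteq\mathcal{Z}_{g,1}'$. As in the case $p=0$, $\mathcal{Z}_{g,1}'$ contains every canonical edge $V\xrightarrow{\Phi_V}V_0$, and hence, by pre- and post-composing with canonical edges, every edge $V_0\xrightarrow{\psi}V_0$ with $\psi$ a map in $\operatorname{\normalfont A}_{g,1}$ that stabilises $t_1$. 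The general device, as in the families (I.3)--(I.5) of Theorem~\ref{thm:p0}, is this: given a Nielsen edge $V\xrightarrow{\phi}W$, complete the square whose vertical sides are the canonical edges $\Phi_V$, $\Phi_W$ and whose remaining side $\theta\in\operatorname{\normalfont A}_{g,1}$ makes it commute; if $\theta$ stabilises $t_1$ then $\theta$, $\Phi_V$, $\Phi_W$ all lie in $\mathcal{Z}_{g,1}'$, and therefore so does $\phi$.

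By~(\ref{defs:phi}.2) this works for every $\N_1$ edge, for every $\N_2$ edge whose multiplier is an $x$-letter, and --- a type absent when $p=0$ --- for every $\N_3$ edge: there $\phi$ merely conjugates $t_1$ by the adjacent letter, and~(\ref{defs:phi}.2), which carries the conjugate of $t_1$ visible in $V$ (and that visible in $W$) back to $t_1$, undoes exactly this conjugation, so the bottom side $\theta$ of the square still stabilises $t_1$. This leaves only the genuinely new edges: the right $\N_2$ edges $\bigl(\begin{smallmatrix}v_k\\ v_k\overline{t_1}\end{smallmatrix}\bigr)$ whose multiplier is $\overline{t_1}$ --- equivalently, which slide an $x$-letter past $t_1$ --- together with their left-hand inverses. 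For these the bottom side $\theta$ need not stabilise $t_1$, and this is where $\gamma_1$ enters: a short computation gives $w_1^{\gamma_1}=w_1$, where $w_1=t_1\overline y_1^{x_1}=t_1\overline x_1\overline y_1 x_1$ is the common $\Phi_V$-image of the prefixes $t_1aP\overline a$ of~(\ref{defs:phi}.3), so $\gamma_1\in\Stab(w_1;\operatorname{\normalfont A}_{g,1})\cap\mathcal{Z}_{g,1}'$.

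The plan for this last family is to imitate the factorisation used for the family (I.5): express each such $\N_2$ edge as a product of an $\N_3$ edge, an $x$-letter-multiplier $\N_2$ edge (both already in $\mathcal{Z}_{g,1}'$), and a loop at $V_0$ stabilising $w_1$; equivalently, to show first that $\mathcal{Z}_{g,1}'$ contains every map in $\operatorname{\normalfont A}_{g,1}$ stabilising $w_1$. I expect this to be the main obstacle. Unlike the corresponding step in Theorem~\ref{thm:p0} --- where $\Stab(\overline x_1;\operatorname{\normalfont A}_{g,0})$ was extracted from the given $\Stab(\overline x_1\overline y_1 x_1;\operatorname{\normalfont A}_{g,0})$ by the single conjugation $(\overline x_1\overline y_1 x_1)^{\beta_1\alpha_1}=\overline x_1$ --- this cannot be done by a conjugation, since $w_1$ is not $F_{g,1}$-conjugate to $t_1$; instead one must show that $\gamma_1$, together with the maps in $\operatorname{\normalfont A}_{g,1}$ stabilising both $t_1$ and $w_1$ (which already lie in $\Stab(t_1;\operatorname{\normalfont A}_{g,1})\subseteq\mathcal{Z}_{g,1}'$), generates $\Stab(w_1;\operatorname{\normalfont A}_{g,1})$ --- the content being that a map fixing $w_1$ can be adjusted by a word in $\gamma_1^{\pm1}$ so as also to fix $t_1$. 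Once this, and the routine but intricate verification that the families constructed above exhaust $\mathcal{N}_{g,1}$, are in hand, Remark~\ref{rem} yields that $\operatorname{\normalfont A}_{g,1}$ is generated by $\Stab(t_1;\operatorname{\normalfont A}_{g,1})\cup\{\gamma_1\}$.
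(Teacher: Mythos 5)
Your set-up (Remark~\ref{rem}, the subgroupoid $\mathcal{Z}_{g,1}'$, and the commuting-square device that disposes of all $\N_1$ edges, all $\N_2$ edges whose multiplier is an $x$-letter, and all $\N_3$ edges via~(\ref{defs:phi}.2)) agrees with the paper. The gap is exactly where you say you expect the main obstacle: the $\N_2$ edges that slide an $x$-letter past $t_1$. For these you propose to factor each edge through a loop at the basepoint stabilising $w_1$, and then to prove that $\Stab(w_1;\operatorname{\normalfont A}_{g,1})$ is generated by $\gamma_1$ together with $\Stab(t_1;\operatorname{\normalfont A}_{g,1})\cap\Stab(w_1;\operatorname{\normalfont A}_{g,1})$. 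Neither the factorisation nor the generation statement is proved, and the latter is not a routine lemma: restricted to the rank-three free factor carrying $t_1,x_1,y_1$ it is essentially the $g=1$ case of the very theorem being proved, and it is not delivered by the paper's splitting results (Theorem~\ref{thm:z}, Consequence~\ref{cons:z2c}), which split along letters and free factors, not along the non-basis element $w_1$. So, as written, the argument is circular at its crux; the true observation $w_1^{\gamma_1}=w_1$ carries no weight by itself.

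The paper needs no statement about $\Stab(w_1;\operatorname{\normalfont A}_{g,1})$ at all. The way $\gamma_1$ actually enters is through the coset $\Stab(t_1;\operatorname{\normalfont A}_{g,1})\gamma_1\subseteq\mathcal{Z}_{g,1}'$, which is precisely the set of elements of $\operatorname{\normalfont A}_{g,1}$ with $t_1\mapsto t_1^{\overline x_1\overline y_1x_1}$; hence a loop at the basepoint is known to lie in $\mathcal{Z}_{g,1}'$ as soon as it sends $t_1$ either to $t_1$ or to $t_1^{\overline x_1\overline y_1x_1}$. For an edge $Pt_1aQ\overline aR\to PaQ\overline at_1R$ one then does not close the square with the canonical edge at the target; instead the right-hand vertical is replaced by an explicit composite of edges already in $\mathcal{Z}_{g,1}'$ which first conjugates $t_1$ to the front ($t_1\mapsto t_1^{PaQ\overline a}$), then moves $P$ past it, and finally applies the canonical edge from $t_1aQ\overline aPR$, which by~(\ref{defs:phi}.3) carries $t_1^{aQ\overline a}$ to $t_1^{\overline x_1\overline y_1x_1}$. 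The bottom map of the resulting hexagon therefore sends $t_1\mapsto t_1^{\overline x_1\overline y_1x_1}$ and lies in $\Stab(t_1;\operatorname{\normalfont A}_{g,1})\gamma_1$; the edges with the multiplier on the other side of $t_1$ follow by one further commuting square. If you wish to salvage your route, you must actually prove your claim about $\Stab(w_1;\operatorname{\normalfont A}_{g,1})$ (say, by an analogue of Consequence~\ref{cons:z2c} after a change of basis exhibiting $w_1$), which is a substantial extra piece of work that the paper's construction makes unnecessary.
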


\begin{proof} Let $\mathcal{Z}_{g,1}'$  denote the
subgroupoid of $\mathcal{Z}_{g,1}$ generated by  the given set together with all the
canonical edges.     By Remark~\ref{rem}, it suffices to show that   $\mathcal{N}_{g,1} \subseteq \mathcal{Z}_{g,1}'$.

Now $\mathcal{Z}_{g,1}'$ contains
$\Stab(t_1; \operatorname{\normalfont A}_{g,1})\gamma_1$,  which consists of the maps in $\operatorname{\normalfont A}_{g,1}$
with $t_1 \mapsto   t_1^{\gamma_1} = t_1^{w_1} =  t_1^{\overline x_1 \overline y_1 x_1}$.
Thus,  $\mathcal{Z}_{g,1}'$  contains all the  edges of the forms

\bigskip

\noindent $(\M\text{II}.1) \colon   V \in  \operatorname{V}\!\!\mathcal{Z}_{g,1}
\xrightarrow{ \Phi_V}
  t_1\Pi [x,y]_{\bbl1 1{\uparrow} g \bbr1 },$

\bigskip

\noindent $(\M\text{II}.2)  \colon t_1\Pi [x,y]_{\bbl1 1{\uparrow} g \bbr1 }
\xrightarrow{\text{map in $\operatorname{\normalfont A}_{g,1}$  with } t_1 \mapsto t_1 \text{ or } t_1 \mapsto t_1^{\overline x_1 \overline y_1 x_1}}
t_1\Pi [x,y]_{\bbl1 1{\uparrow} g \bbr1 }.$

\bigskip

We next   describe another family of edges in $\mathcal{Z}_{g,1}'$.

\begin{figure}[H]
\begin{center}
\setlength{\unitlength}{1cm}
\begin{picture}(12.5,1.95)
\put(0,1.7){\makebox(0,0)[l] {$(\M\text{II}.3) \colon P_1t_{1}Q_1   \in \operatorname{V}\!\!\mathcal{Z}_{g,1}$}}
\put(3.5,1.7){\vector(1,0){6.2}}
\put(3.7,1.95){\makebox(0,0)[l] {$\scriptstyle \text{ map in $ \Aut F_{g,1} $   with }
t_{1}^{\overline P_1} \mapsto t_{1}^{\overline P_2},\, P_1Q_1 \mapsto P_2Q_2 $}}
\put(9.8,1.7){\makebox(0,0)[l] {$  P_{2}t_{1}Q_{2} \in \operatorname{V}\!\!\mathcal{Z}_{g,1}$}}
\put(1.2,1.5){\vector(0,-1){1}}
\put(1.3,1.1 ){\makebox(0,0)[l]{$\scriptstyle (\M\text{II}.1)  \overset{(\ref{defs:phi}.2) }{\Rightarrow}  (t_{ 1}^{\overline P_{ 1}} \mapsto  t_1)$}}
\put(9.9,1.5){\vector(0,-1){1}}
\put(10.0,1.1){\makebox(0,0)[l]{$\scriptstyle (\M\text{II}.1)   \overset{(\ref{defs:phi}.2) }{\Rightarrow}   (t_{1}^{\overline P_{ 2}}  \mapsto t_1)$}}
\put(1.1,.2){\makebox(0,0)[l]{$t_1 \Pi  [x,y]_{\bbl1 1 {\uparrow} g \bbr1 }  $}}
\put(9.8,.2){\makebox(0,0)[l]{$t_1 \Pi  [x,y]_{\bbl1 1 {\uparrow} g \bbr1 } $}}
\put(3.2,.2){\vector(1,0){6.5}}
\put(3.3,0){\makebox(0,0)[l]{$ \scriptstyle   \text{map  makes the square commute }  \Rightarrow  (t_1 \mapsto t_1)   \Rightarrow \, (\M\text{II}.2)$}}
\end{picture}
\end{center}
\end{figure}

Edges of type $(\M \text{II}.3)$ include all the $\N_1$ edges, and all the
$\N_2$ edges which do not involve $t_{1}$, and  all the $\N_3$ edges, since these have the form
\mbox{
$Pat_{1} Q
\xrightarrow{\left ( \begin{smallmatrix}
t_{1}  \\
 t_{1}^{a}
\end{smallmatrix}\right) \Rightarrow(t_{1}^{\overline a\,\overline P} \mapsto t_{1}^{\overline P}, PaQ \mapsto PaQ)} P   t_{1}a Q     $}, or its inverse.

\bigskip

It remains to consider the $\N_2$ edges which involve $t_1$;
these are   of the forms

\noindent $Pt_{1}a Q \overline a R
\xrightarrow{\left ( \begin{smallmatrix}
a  \\
 \overline t_1 a
\end{smallmatrix}\right)} P a Q \overline a t_1 R,$ \quad
 $
Pat_{1} Q \overline a R
\xrightarrow{\left ( \begin{smallmatrix}
a   \\
 a \overline t_1
\end{smallmatrix}\right)} P a Q t_1 \overline a R      ,$   and their inverses.
To construct a commuting hexagon, we define the following edges.

\noindent $(\M\text{II}.4)\colon Pa Q \overline at_{1} R   \in \operatorname{V}\!\!\mathcal{Z}_{g,1}
 \xrightarrow{ \left ( \begin{smallmatrix}
t_1  \\
   t_1^{PaQ\overline a}
\end{smallmatrix}\right) \Rightarrow (\M\text{II}.3), (t_1^{\overline P} \mapsto  t_1^{PaQ\overline a \overline P}) }
 t_{1}Pa Q \overline a R  \in   \operatorname{V}\!\!\mathcal{Z}_{g,1},$

\medskip

\noindent $(\M\text{II}.5)\colon t_{1}Pa Q \overline a R  \in \operatorname{V}\!\!\mathcal{Z}_{g,1} \xrightarrow{\left ( \begin{smallmatrix}
a  \\
  \overline P a
\end{smallmatrix}\right)
 \Rightarrow (\M\text{II}.3), (t_1^{PaQ\overline a\overline P} \mapsto t_1^{aQ \overline a})} t_{1}a Q \overline a PR
 \in \operatorname{V}\!\!\mathcal{Z}_{g,1}$,

\medskip

\noindent $(\M\text{II}.6)\colon t_{1}a Q \overline a PR  \in\operatorname{V}\!\!\mathcal{Z}_{g,1}
 \xrightarrow{(\M\text{II}.1)
\overset{(\ref{defs:phi}.3)}{\Rightarrow} (t_1^{a Q \overline a} \mapsto t_1^{\overline x_1 \overline y_1 x_1})
} t_1
 \Pi  [x,y]_{\bbl1 1 {\uparrow} g \bbr1 }$.

\medskip

  \noindent Then we have the factorization
\begin{figure}[H]
\begin{center}
\setlength{\unitlength}{1cm}
\begin{picture}(12.5,2.0)
\put(0,1.7){\makebox(0,0)[l] {$(\M\text{II}.7) \colon Pt_1a Q \overline a R   \in \operatorname{V}\!\!\mathcal{Z}_{g,1}$}}
\put(3.8,1.7){\vector(1,0){5.9}}
\put(5.5,2.0){\makebox(0,0)[l] {$\scriptstyle  \left ( \begin{smallmatrix}
a   \\
 \overline t_1 a
\end{smallmatrix}\right)   \Rightarrow (t_1^{\overline P} \mapsto t_1^{\overline P}) $}}
\put(9.8,1.7){\makebox(0,0)[l] {$  P a Q \overline a t_1 R   \in \operatorname{V}\!\!\mathcal{Z}_{g,1}$}}
\put(1.1,1.5){\vector(0,-1){1}}
\put(1.2,1.1){\makebox(0,0)[l]{$\scriptstyle  (\M\text{II}.1)  \overset{(\ref{defs:phi}.2) }{\Rightarrow}  (t_{1}^{\overline P}  \mapsto t_1)$}}
\put(9.8,1.5){\vector(0,-1){1}}
\put(9.9,1.2){\makebox(0,0)[l]{$\scriptstyle  (\M\text{II}.4)-(\M\text{II}.6)    $}}
\put(9.9,.9){\makebox(0,0)[l]{$\scriptstyle   \Rightarrow
(t_{1}^{\overline P}  \mapsto t_1^{\overline x_1 \overline y_1 x_1})$}}
\put(1.0,0.2){\makebox(0,0)[l]{$ t_1
\Pi  [x,y]_{\bbl1 1 {\uparrow} g \bbr1 }$}}
\put(9.8,0.2){\makebox(0,0)[l]{$t_1
 \Pi  [x,y]_{\bbl1 1 {\uparrow} g \bbr1 }$}}
\put(3.1,.2){\vector(1,0){6.5}}
\put(3.2,0.0){\makebox(0,0)[l]{$ \scriptstyle   \text{map makes  square  commute } \Rightarrow    (t_1 \mapsto
t_1^{\overline x_1 \overline y_1 x_1})  \Rightarrow (\M\text{II}.2) $}}
\end{picture}
\end{center}
\end{figure}

\noindent We also have the factorization

\begin{figure}[H]
\begin{center}
\setlength{\unitlength}{1cm}
\begin{picture}(12.5,2.0)
\put(0.0,1.7){\makebox(0,0)[l] {$(\M\text{II}.8) \colon Pa t_1Q \overline a R   \in \operatorname{V}\!\!\mathcal{Z}_{g,1}$}}
\put(3.8,1.7){\vector(1,0){5.8}}
\put(6,2.0){\makebox(0,0)[l] {$\scriptstyle  \left ( \begin{smallmatrix}
a   \\
 a\overline t_1
\end{smallmatrix}\right)   $}}
\put(9.7,1.7){\makebox(0,0)[l] {$  P a Q  t_1\overline a R   \in \operatorname{V}\!\!\mathcal{Z}_{g,1}$}}
\put(1.1,1.5){\vector(0,-1){1}}
\put(1.2,1.0){\makebox(0,0)[l]{$\scriptstyle   \left ( \begin{smallmatrix}
t_1  \\
   t_1^{a}
\end{smallmatrix}\right)  \Rightarrow (\M\text{II}.3)  $}}
\put(9.7,1.5){\vector(0,-1){1}}
\put(9.8,1.0){\makebox(0,0)[l]{$\scriptstyle    \left ( \begin{smallmatrix}
t_1  \\
   t_1^{a}
\end{smallmatrix}\right)  \Rightarrow (\M\text{II}.3)   $}}
\put(1.0,.2){\makebox(0,0)[l]{$ Pt_1aQ \overline a  R \in \operatorname{V}\!\!\mathcal{Z}_{g,1}$}}
\put(9.7,.2){\makebox(0,0)[l]{$ PaQ \overline a  t_1R \in \operatorname{V}\!\!\mathcal{Z}_{g,1}$}}
\put(3.8,.3){\vector(1,0){5.8}}
\put(3.9,.0){\makebox(0,0)[l]{$ \scriptstyle   \text{map makes square commute }  \Rightarrow   \left ( \begin{smallmatrix}
a   \\
 \overline t_1  a
\end{smallmatrix}\right) \Rightarrow (\M\text{II}.7)$}}
\end{picture}
\end{center}
\end{figure}

We have now shown that    $\mathcal{N}_{g,1} \subseteq \mathcal{Z}_{g,1}'$, as desired.
\end{proof}

\begin{theorem}\label{thm:p2} Let $g \in [0{\uparrow}\infty[$\,, $p \in [2{\uparrow} \infty[$\,.
Then the group $\operatorname{\normalfont A}_{g,p}$
is generated by
$\Stab(t_p; \operatorname{\normalfont A}_{g,p})
\cup \{\sigma_p\}$  where $\sigma_p \coloneq \bigl(\begin{smallmatrix}
t_p & & t_{p-1} \\   t_{p-1} & & t_p^{t_{p-1}}
\end{smallmatrix}\bigr)$.
\end{theorem}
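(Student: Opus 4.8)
The plan is to imitate the proofs of Theorems~\ref{thm:p0} and~\ref{thm:p1}.  Write $V_0\coloneq\Pi t_{\bbl1 p{\downarrow} 1 \bbr1}\Pi[x,y]_{\bbl1 1{\uparrow} g \bbr1}$, let $A'\coloneq\Stab(t_p;\operatorname{\normalfont A}_{g,p})\cup\{\sigma_p\}$, and let $\mathcal{Z}_{g,p}'$ denote the subgroupoid of $\mathcal{Z}_{g,p}$ generated by the edges of $A'$, viewed as edges from $V_0$ to $V_0$, together with all the canonical edges.  By Remark~\ref{rem} and Consequence~\ref{cons:z1}, it suffices to show $\mathcal{N}_{g,p}\subseteq\mathcal{Z}_{g,p}'$.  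Note at once that $\mathcal{Z}_{g,p}'$ contains the canonical edges $V\xrightarrow{\Phi_V}V_0$ for every $V\in\operatorname{V}\!\!\mathcal{Z}_{g,p}$ and, composing with these, contains every edge $V_0\to V_0$ given by an element of $\operatorname{\normalfont A}_{g,p}$ fixing $t_p$, as well as the edge $\sigma_p$.

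As in the earlier theorems, the core of the argument is a commuting-square family.  Given a Nielsen edge $V\xrightarrow{\phi}W$, form the square whose sides are $\phi$ together with the canonical edges $\Phi_V$ and $\Phi_W$, and consider the resulting edge $\Phi_V^{-1}\phi\Phi_W\in\operatorname{\normalfont A}_{g,p}$ from $V_0$ to $V_0$; since $\Phi_V$ and $\Phi_W$ already lie in $\mathcal{Z}_{g,p}'$, it is enough to show $\Phi_V^{-1}\phi\Phi_W\in\mathcal{Z}_{g,p}'$.  Let $C_1^V$ denote the first conjugated $t$-letter of $V$, that is, the element $t_{j_1}^{\overline P}$ of property~(\ref{defs:phi}.4), and define $C_1^W$ similarly.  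Using property~(\ref{defs:phi}.4) --- which says that $\Phi_V$ carries $C_1^V$ to $t_p$ and $\Phi_W$ carries $C_1^W$ to $t_p$ --- I would check that $(C_1^V)^\phi=C_1^W$, and hence that $\Phi_V^{-1}\phi\Phi_W$ fixes $t_p$, for every Nielsen edge $\phi$ that does not transport the first $t$-letter of $V$ past another $t$-letter; this handles all the $\N_1$ edges, all the $\N_2$ edges that move an $x$-letter, all the $\N_2$ edges that transport a $t$-letter without crossing the first one, and all the $\N_3$ edges except the single one that braids the first two $t$-letters of $V$ past one another.  For that one exceptional $\N_3$ edge, a direct computation gives $(\Phi_V^{-1}\phi\Phi_W)\,\sigma_p\in\Stab(t_p;\operatorname{\normalfont A}_{g,p})$, so that $\Phi_V^{-1}\phi\Phi_W\in\Stab(t_p;\operatorname{\normalfont A}_{g,p})\,\sigma_p^{-1}\subseteq\mathcal{Z}_{g,p}'$; this is the only place where the generator $\sigma_p$ is used.

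There remain the Nielsen edges genuinely left over, namely the $\N_2$ edges that transport a $t$-letter across the first $t$-letter of $V$ --- equivalently, across the occurrence that the canonical edges turn into $t_p$.  These are the analogues of the $\N_2$ edges involving $t_1$ in Theorem~\ref{thm:p1}, and for them I would mimic the polygon constructions $(\M\text{II}.4)$--$(\M\text{II}.8)$ of that proof: bring both endpoints of the edge to a normal form in which the moving $t$-occurrence sits in a fixed place, by inserting intermediate Zieschang elements and travelling along canonical edges, edges of the commuting-square family, and $\sigma_p$, and then close up the polygon by a single remaining side which either fixes $t_p$ or equals $\sigma_p^{\pm1}$ and so lies in $\mathcal{Z}_{g,p}'$.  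The fact that the whole of $\Stab(t_p;\operatorname{\normalfont A}_{g,p})$ is available here, not merely its ADL generators, should keep this case analysis shorter than in the $p=1$ case.

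The step I expect to be the main obstacle is this last one: verifying that for each $t$-transporting $\N_2$ edge the polygon really does close up inside $\mathcal{Z}_{g,p}'$, and in particular identifying the leftover side precisely, up to composition with $t_p$-fixing maps.  As with $(\M\text{II}.4)$--$(\M\text{II}.8)$, this will demand careful bookkeeping of the exact words involved and of which occurrence of $t_p$ or $t_{p-1}$ is being acted on at each stage.
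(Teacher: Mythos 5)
Your framework is the right one and matches the paper's: view $\operatorname{\normalfont A}_{g,p}$ at the base vertex, throw in the canonical edges, and test the bottom map of the commuting square $\Phi_V^{-1}\phi\Phi_W$ using~(\ref{defs:phi}.4); your treatment of the $\N_3$ edge that interchanges the first two $t$-letters is also the correct place where $\sigma_p$ enters. But the key intermediate claim is false, and the case you defer is exactly the content of the theorem. The identity $(C_1^V)^\phi=C_1^W$ does \emph{not} hold for every Nielsen edge that avoids carrying the first $t$-letter past another $t$-letter: it already fails for $\N_2$ edges that transport the first $t$-letter of $V$ across $x$-letters. Concretely, take $g=1$, $p=2$, $V=\overline x_1 y_1 x_1 t_1 t_2 \overline y_1\in \operatorname{V}\!\!\mathcal{Z}_{1,2}$ and the right $\N_2$ edge $\phi=\bigl(\begin{smallmatrix} x_1\\ x_1\overline t_1\end{smallmatrix}\bigr)$, so that $W=t_1\overline x_1 y_1 x_1 t_2 \overline y_1$. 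Here $C_1^W=t_1$, while $(C_1^V)^\phi=t_1^{\,t_1\overline x_1\overline y_1 x_1 \overline t_1}\ne t_1$, so $\Phi_V^{-1}\phi\Phi_W$ sends $t_p=t_2$ to a nontrivial conjugate of $t_2$; in particular it neither fixes $t_p$ nor sends $t_p$ to $t_{p-1}$, so neither of your two mechanisms applies, and the edge is also outside the class you postpone (no $t$-letter is carried past the first $t$-letter). Thus a whole family of $\N_2$ edges is unaccounted for, and the postponed family (a $t$-letter carried across the first $t$-letter) is only a plan to imitate $(\M\text{II}.4)$--$(\M\text{II}.8)$, not a proof.

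The idea you are missing is to stop anchoring the square at the \emph{first} $t$-letter. Since $p\ge 2$, every $\N_2$ edge, and every $\N_3$ edge except the interchange of two adjacent $t$-letters when $p=2$, leaves some $t$-letter occurrence $t_j$ of $V$ untouched, and at that occurrence the prefix and suffix of $V$ are carried to the prefix and suffix of $W$. The paper's proof first manufactures, inside $\mathcal{Z}_{g,p}'$, the edges $Pt_jQ\to t_jPQ$, $t_j\mapsto t_j^{P}$, which conjugate an \emph{arbitrary} $t$-letter occurrence to the front (families $(\M\text{III}.4)$ and $(\M\text{III}.5)$; the interchange $(\M\text{III}.4)$ of the first two $t$-letters is the only step that uses $\sigma_p$, through base-vertex maps with $t_p\mapsto t_{p-1}$), and then flanks the given Nielsen edge by these front-conjugating edges at the untouched occurrence $t_j$, so that the bottom map fixes $t_p$ (family $(\M\text{III}.6)$). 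This disposes of all remaining Nielsen edges at once, including your deferred class and the counterexample type above, with no analogue of the $(\M\text{II}.4)$--$(\M\text{II}.8)$ polygon chase; as written, your proposal does not reach this point.
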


\begin{proof}   Let $\mathcal{Z}_{g,p}'$  denote the
subgroupoid of $\mathcal{Z}_{g,p}$ generated by  the given set together with all the
canonical edges.     By Remark~\ref{rem}, it suffices to show that   $\mathcal{N}_{g,p} \subseteq \mathcal{Z}_{g,p}'$.

Now  $\mathcal{Z}_{g,p}'$  contains
$\Stab(t_p; \operatorname{\normalfont A}_{g,p})\sigma_p$,  which consists of the maps in $\operatorname{\normalfont A}_{g,p}$
with $t_p \mapsto   t_p^{\sigma_p} = t_{p-1}$.
Thus $\mathcal{Z}_{g,p}'$  contains all the  edges of the forms

\bigskip

\noindent $(\M\text{III}.1) \colon   V \in  \operatorname{V}\!\!\mathcal{Z}_{g,p}
\xrightarrow{ \Phi_V}
   \Pi t_{\bbl1 p{\downarrow} 1 \bbr1 }\Pi [x,y]_{\bbl1 1{\uparrow} g \bbr1 },$

\bigskip

\noindent $(\M\text{III}.2)  \colon \Pi t_{\bbl1 p{\downarrow} 1 \bbr1 }\Pi [x,y]_{\bbl1 1{\uparrow} g \bbr1 }
\xrightarrow{\text{map in $ \operatorname{\normalfont A}_{g,p}$ with } t_p \mapsto t_p \text{ or } t_p \mapsto t_{p-1}}
 \Pi t_{\bbl1 p{\downarrow} 1 \bbr1 }\Pi [x,y]_{\bbl1 1{\uparrow} g \bbr1 }.$

\bigskip

We now  describe some more families of edges in $\mathcal{Z}_{g,p}'$.

In the following, we assume that no $t$-letters occur in $P_1$ or $P_2$.
\begin{figure}[H]
\begin{center}
\setlength{\unitlength}{1cm}
\begin{picture}(12.5,1.95)
\put(0,1.7){\makebox(0,0)[l] {$(\M\text{III}.3) \colon P_1t_{j_1}Q_1   \in \operatorname{V}\!\!\mathcal{Z}_{g,p}$}}
\put(3.8,1.7){\vector(1,0){5.7}}
\put(3.8,1.95){\makebox(0,0)[l] {$\scriptstyle \text{in $\Stab([t]_{[1{\uparrow}p]};F_{g,p})$, }
 P_1  \mapsto P_2,  t_{j_1} \mapsto t_{j_2} ,\,  Q_1 \mapsto  Q_2$}}
\put(9.7,1.7){\makebox(0,0)[l] {$  P_{2}t_{j_2}Q_{2} \in \operatorname{V}\!\!\mathcal{Z}_{g,p}$}}
\put(1.3,1.5){\vector(0,-1){1}}
\put(1.4,1.0){\makebox(0,0)[l]{$\scriptstyle (\M\text{III}.1)   \overset{(\ref{defs:phi}.4) }{\Rightarrow}  (t_{j_1}^{\overline P_{ 1}}  \mapsto t_p)$}}
\put(9.8,1.5){\vector(0,-1){1}}
\put(9.9,1.0){\makebox(0,0)[l]{$\scriptstyle (\M\text{III}.1)  \overset{(\ref{defs:phi}.4) }{\Rightarrow}   (t_{j_2}^{\overline P_{ 2}}  \mapsto t_p)$}}
\put(1.2,.2){\makebox(0,0)[l]{$\Pi  t_{\bbl1 p {\downarrow} 1 \bbr1 } \Pi  [x,y]_{\bbl1 1 {\uparrow} g \bbr1 }$}}
\put(9.7,.2){\makebox(0,0)[l]{$\Pi  t_{\bbl1 p {\downarrow} 1 \bbr1 } \Pi  [x,y]_{\bbl1 1 {\uparrow} g \bbr1 } $}}
\put(4,.2){\vector(1,0){5.6}}
\put(4.1,.0){\makebox(0,0)[l]{$ \scriptstyle   \text{  makes  square commute }  \Rightarrow  (t_p\mapsto t_p)   \Rightarrow \, (\M\text{III}.2)$}}
\end{picture}
\end{center}
\end{figure}
\noindent  The edges of type $(\M \text{III}.3)$ include all the $\N_1$ edges.

In the following, we assume that no $t$-letters occur in  $P$ or $Q$.
\begin{figure}[H]
\begin{center}
\setlength{\unitlength}{1cm}
\begin{picture}(12.5,2.1)
\put(0,1.7){\makebox(0,0)[l] {$(\M\text{III}.4) \colon PQt_{j_1} t_{j_2}R   \in \operatorname{V}\!\!\mathcal{Z}_{g,p}$}}
\put(4.2,1.7){\vector(1,0){5.0}}
\put(5.2,2.1){\makebox(0,0)[l] {$\scriptstyle \left ( \begin{smallmatrix}
t_{j_2}  \\
 t_{j_2}^{Q   t_{j_1}  }
\end{smallmatrix}\right) \Rightarrow (t_{j_1}^{\overline Q\,\overline P} \mapsto t_{j_1}^{\overline Q\,\overline P})$}}
\put(9.3,1.7){\makebox(0,0)[l] {$  Pt_{j_2}Qt_{j_1} R \in\operatorname{V}\!\!\mathcal{Z}_{g,p}$}}
\put(1.3,1.5){\vector(0,-1){1}}
\put(1.4,1.0){\makebox(0,0)[l]{$\scriptstyle (\M\text{III}.1)  \overset{(\ref{defs:phi}.4) }{\Rightarrow}  (t_{j_1}^{\overline Q\,\overline P} \mapsto t_p)$}}
\put(9.4,1.5){\vector(0,-1){1}}
\put(9.5,1.0){\makebox(0,0)[l]{$\scriptstyle (\M\text{III}.1)  \overset{(\ref{defs:phi}.4) }{\Rightarrow}    (t_{j_1}^{\overline Q\,\overline P} \mapsto t_{p-1})$}}
\put(1.2,.2){\makebox(0,0)[l]{$\Pi  t_{\bbl1 p {\downarrow} 1 \bbr1 } \Pi  [x,y]_{\bbl1 1 {\uparrow} g \bbr1 } $}}
\put(9.3,.2){\makebox(0,0)[l]{$\Pi  t_{\bbl1 p {\downarrow} 1 \bbr1 } \Pi  [x,y]_{\bbl1 1 {\uparrow} g \bbr1 }$}}
\put(4,.2){\vector(1,0){5.2}}
\put(4.1,.0){\makebox(0,0)[l]{$ \scriptstyle   \text{ square commutes }  \Rightarrow   (t_p \mapsto t_{p-1})  \Rightarrow \, (\M\text{III}.2)$}}
\end{picture}
\end{center}
\end{figure}

In the following, we assume that no $t$-letters occur in  $P$.
\medskip \newline $(\M\text{III}.5') \colon Pt_{j_1}Qt_{j_2}R   \in \operatorname{V}\!\!\mathcal{Z}_{g,p}
\xrightarrow{\left ( \begin{smallmatrix}
t_{j_2}  \\
 t_{j_2}^{P  t_{j_1} Q}
\end{smallmatrix}\right)}   t_{j_2}Pt_{j_1}QR \in \operatorname{V}\!\!\mathcal{Z}_{g,p}$
\medskip \newline
has the factorization
\medskip \newline \mbox{$ Pt_{j_1}Qt_{j_2}R
\xrightarrow{\left ( \begin{smallmatrix}
t_{j_2}  \\
 t_{j_2}^{ Q }
\end{smallmatrix}\right)\Rightarrow (\M\text{III}.3)}   Pt_{j_1}t_{j_2}QR
\xrightarrow{\left ( \begin{smallmatrix}
t_{j_2}  \\
 t_{j_2}^{ P t_{j_1} }
\end{smallmatrix}\right) \newline \Rightarrow (\M\text{III}.4)}   t_{j_2}Pt_{j_1}QR    $.}

\medskip

In the following, we do allow   $t$-letters to occur in $P$, and rewrite $(\M\text{III}.5')$ as

  \noindent  $(\M\text{III}.5) \colon Pt_jQ  \in \operatorname{V}\!\!\mathcal{Z}_{g,p} \xrightarrow{\left ( \begin{smallmatrix}
t_j   \\
  t_j^{P}
\end{smallmatrix}\right)} t_jPQ \in \operatorname{V}\!\!\mathcal{Z}_{g,p}.$

\medskip

In the following, we do allow   $t$-letters to occur in $P_1$, $P_2$.

\begin{figure}[H]
\begin{center}
\setlength{\unitlength}{1cm}
\begin{picture}(12.5,1.9)
\put(0,1.7){\makebox(0,0)[l] {$(\M\text{III}.6) \colon P_1t_{j }Q_1   \in \operatorname{V}\!\!\mathcal{Z}_{g,p}$}}
\put(3.8,1.7){\vector(1,0){6.2}}
\put(3.8,1.9){\makebox(0,0)[l] {$\scriptstyle\text{in $\Stab([t]_{[1{\uparrow}p]};F_{g,p})$ with }   P_1\mapsto P_2, t_j\mapsto t_j, Q_1\mapsto Q_2$}}
\put(10.2,1.7){\makebox(0,0)[l] {$  P_2t_{j }Q_2 \in \operatorname{V}\!\!\mathcal{Z}_{g,p}$}}
\put(1.3,1.5){\vector(0,-1){1}}
\put(1.4,1.0){\makebox(0,0)[l]{$\scriptstyle \left ( \begin{smallmatrix}
t_{j }   \\
  t_{j }^{P_1}
\end{smallmatrix}\right)    \Rightarrow (\M\text{III}.5) $}}
\put(10.2,1.5){\vector(0,-1){1}}
\put(10.3,1.0){\makebox(0,0)[l]{$\scriptstyle \left ( \begin{smallmatrix}
t_{j }   \\
  t_{j }^{P_2}
\end{smallmatrix}\right)    \Rightarrow (\M\text{III}.5) $}}
\put(1.2,.2){\makebox(0,0)[l]{$t_{j } P_1Q_1 \in \operatorname{V}\!\!\mathcal{Z}_{g,p}$}}
\put(10.2,.2){\makebox(0,0)[l]{$ t_{j }P_2Q_2\in \operatorname{V}\!\!\mathcal{Z}_{g,p}$}}
\put(3.7,.2){\vector(1,0){6.3}}
\put(3.9,0){\makebox(0,0)[l]{$ \scriptstyle   \text{map  makes  square commute } \Rightarrow   (t_j \mapsto t_{j})  \Rightarrow \, (\M\text{III}.3)$}}
\end{picture}
\end{center}
\end{figure}

 Since $p \ge 2$,  any $\N_2$ edge of $\mathcal{Z}_{g,p}$  will be of type $(\M\text{III}.6)$ for some $j$,
as will any $\N_3$ edge except where $p=2$ and we have an edge of the    form\linebreak
  $Pt_{j_1}t_{j_2} Q   \in  \operatorname{V}\!\!\mathcal{Z}_{g,p}
\xrightarrow{\left ( \begin{smallmatrix}
t_{j_2}  \\
 t_{j_2}^{ t_{j_1}}
\end{smallmatrix}\right) } P t_{j_2} t_{j_1} Q    \in  \operatorname{V}\!\!\mathcal{Z}_{g,p},$
and or its inverse,  and, since $p=2$, these are of type $(\M\text{III}.4)$.

We have now shown that    $\mathcal{N}_{g,p} \subseteq \mathcal{Z}_{g,p}'$, as desired.  \end{proof}

\section{The ADLH generating set}\label{sec:main}

The results of the preceding two sections combine to give
 an algebraic proof of the algebraic form of~\cite[Proposition 2.10(ii) with $r=0$]{LP}.
We start with the ADL set.

\begin{theorem}\label{thm:main}  Let  $g$, $p \in [0{\uparrow}\infty[$\,.
 Let $\operatorname{\normalfont A}_{g,p}$ denote the group of automorphisms of $\gp{t_{[1 \uparrow  p ]}
 \cup x_{[1 \uparrow g ]} \cup  y_{[1 \uparrow g ]}}
{\quad}$
that fix  $\operatornamewithlimits{\Pi}  t_{\bbl1 p {\downarrow} 1 \bbr1 }
 \operatornamewithlimits{\Pi}  [x ,y ]_{\bbl1 1 {\uparrow} g \bbr1 } $ and
permute the set of  conjugacy classes \mbox{$[ t]_{[1 {\uparrow} p]} $}.
Then
$\operatorname{\normalfont A}_{g,p}$ is generated by
$$
\sigma_{[2{\uparrow}p]} \cup \alpha_{[1{\uparrow}g]} \cup \beta_{[1{\uparrow}g]}
\cup  \gamma_{[\max(2-p,1){\uparrow}g]},
$$
where,
 for \mbox{$j \in [2{\uparrow}p]$,}    \mbox{$\sigma_j \coloneq \bigl(\begin{smallmatrix}
t_j  & t_{j-1} \\    t_{j-1}  &  \,\,t_{j}^{{t_{j-1}}}
\end{smallmatrix}\bigr),$}
for $i \in [1{\uparrow}g]$,
 \mbox{$\alpha_i \coloneq  \bigl(\begin{smallmatrix}
x_i \\  \overline y_ix_i
\end{smallmatrix}\bigr)  $} and \mbox{$\beta_i \coloneq \bigl(\begin{smallmatrix}
y_i \\[.5mm]     x_iy_i
\end{smallmatrix}\bigr) $,}
for $i \in [2{\uparrow}g]$,
\mbox{ $\gamma_i \coloneq \bigl(\begin{smallmatrix}
x_{i-1} & & y_{i-1} & & x_{i}\\  \overline w_i x_{i-1} & & y_{i-1}^{w_{i}} & & x_{i}w_{i}
\end{smallmatrix}\bigr)$} with \mbox{$w_i \coloneq y_{i-1}   \overline y_{i }^{x_i}$,}
and if  $\min(1,g,p) = 1$,
 $\gamma_1 \coloneq \bigl(\begin{smallmatrix}
t_1 & & x_1  \\   t_1^{w_1} & & x_{1}w_{1}
\end{smallmatrix}\bigr)$ with \mbox{$w_1 \coloneq  t_1 \overline y_1^{x_1}$}.
\end{theorem}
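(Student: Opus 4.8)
The plan is to argue by downward induction on $n \coloneq 2g+p$, peeling off one ADL generator at a time by means of the structural results of Sections~\ref{sec:Z1} and~\ref{sec:mc}. When $n \le 1$, so $(g,p) \in \{(0,0),(0,1)\}$, the group $\operatorname{\normalfont A}_{g,p}$ is trivial and the ADL set is empty, and there is nothing to prove. When $n \ge 2$, exactly one of three cases holds: $p \ge 2$; or $p = 1$ and $g \ge 1$; or $p = 0$ and $g \ge 1$. In the first, Theorem~\ref{thm:p2} gives $\operatorname{\normalfont A}_{g,p} = \langle \Stab(t_p;\operatorname{\normalfont A}_{g,p}),\, \sigma_p\rangle$ and Consequence~\ref{cons:z2b} gives $\Stab(t_p;\operatorname{\normalfont A}_{g,p}) \iso \operatorname{\normalfont A}_{g,p-1}$. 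In the second, Theorem~\ref{thm:p1} gives $\operatorname{\normalfont A}_{g,1} = \langle \Stab(t_1;\operatorname{\normalfont A}_{g,1}),\, \gamma_1\rangle$ and Consequence~\ref{cons:z2b} with $p = 1$ gives $\Stab(t_1;\operatorname{\normalfont A}_{g,1}) \iso \operatorname{\normalfont A}_{g,0}$. In the third, Theorem~\ref{thm:p0} gives $\operatorname{\normalfont A}_{g,0} = \langle \Stab(\overline x_1 \overline y_1 x_1;\operatorname{\normalfont A}_{g,0}),\, \beta_1\rangle$ and Consequence~\ref{cons:z2c} gives $\Stab(\overline x_1 \overline y_1 x_1;\operatorname{\normalfont A}_{g,0}) \iso \gen{\alpha_1} \times \operatorname{\normalfont A}_{g-1,1}$. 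Since $n$ strictly decreases under each of these reductions ($2g+p \mapsto 2g+(p-1)$, $2g+1 \mapsto 2g$, $2g \mapsto 2(g-1)+1$), the recursion is well founded; and the three extra generators $\sigma_p$, $\gamma_1$, $\beta_1$ all lie in the ADL set of $\operatorname{\normalfont A}_{g,p}$ (for $\gamma_1$, note that in the relevant case $\min(1,g,p) = 1$).

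What then has to be verified in each case is that the identification of the stabilizer subgroup with the smaller group carries the ADL generating set of that smaller group into the subgroup of $\operatorname{\normalfont A}_{g,p}$ generated by the ADL set of $\operatorname{\normalfont A}_{g,p}$; granting this, the inductive hypothesis applied to the smaller group, together with the extra generator — and, in the third case, together with the kernel generator $\alpha_1$, which is itself an ADL generator of $\operatorname{\normalfont A}_{g,0}$ — shows that the ADL set generates $\operatorname{\normalfont A}_{g,p}$. In the first two cases this verification is immediate: the identification is restriction to the evident free factor, its inverse being the extension of an automorphism so that it fixes $t_p$ (respectively $t_1$), and since none of the ADL generators $\sigma_{[2{\uparrow}(p-1)]},\alpha_{[1{\uparrow}g]},\beta_{[1{\uparrow}g]},\gamma_{[1{\uparrow}g]}$ of $\operatorname{\normalfont A}_{g,p-1}$ involves the letter $t_p$ (respectively since no ADL generator of $\operatorname{\normalfont A}_{g,0}$ involves any $t$-letter), each is carried verbatim to the identically named ADL generator of the larger group.

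The third case is the one that carries the real bookkeeping. The projection $\Stab(\overline x_1 \overline y_1 x_1;\operatorname{\normalfont A}_{g,0}) \to \operatorname{\normalfont A}_{g-1,1}$ of Consequence~\ref{cons:z2c} is restriction to the free factor $K = \gen{y_{[1{\uparrow}g]} \cup x_{[2{\uparrow}g]}}$ followed by the relabelling isomorphism $K \iso F_{g-1,1}$ sending $y_1 \mapsto t_1$ and $x_i \mapsto x_{i-1}$, $y_i \mapsto y_{i-1}$ for $i \in [2{\uparrow}g]$. One checks that this map sends $\alpha_i$, $\beta_i$, $\gamma_i$ of $\operatorname{\normalfont A}_{g,0}$, for each $i \in [2{\uparrow}g]$, to $\alpha_{i-1}$, $\beta_{i-1}$, $\gamma_{i-1}$ of $\operatorname{\normalfont A}_{g-1,1}$ respectively; the only computation of any substance is the one for the $\gamma$'s, and it is exactly here that the shape of the conjugating words $w_i = y_{i-1}\overline y_i^{x_i}$ in the definition of the $\gamma$'s gets used — in particular that $\gamma_2$ of $\operatorname{\normalfont A}_{g,0}$ restricts on $K$ to $\gamma_1$ of $\operatorname{\normalfont A}_{g-1,1}$. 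Hence every ADL generator of $\operatorname{\normalfont A}_{g-1,1}$ has a lift, lying in the subgroup generated by the ADL set of $\operatorname{\normalfont A}_{g,0}$, along the projection (the lift being pinned down only up to the factor $\gen{\alpha_1}$, which also lies in that subgroup), and this completes the induction. I do not expect any of the steps above to be the genuine obstacle: the substance has already been invested in Theorems~\ref{thm:p0}--\ref{thm:p2}, and what remains is careful tracking of named generators through the three identifications, with the relabelling in the third case the most error-prone.
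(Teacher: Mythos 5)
Your proposal is correct and follows essentially the same route as the paper: induction on $2g+p$, splitting into the cases $p\ge 2$, $p=1$, $p=0$, invoking Theorems~\ref{thm:p2}, \ref{thm:p1}, \ref{thm:p0} together with Consequences~\ref{cons:z2b} and~\ref{cons:z2c}, and tracking the named generators through the identifications (in particular checking that $\gamma_2$ of $\operatorname{\normalfont A}_{g,0}$ maps to $\gamma_1$ of $\operatorname{\normalfont A}_{g-1,1}$, which is exactly the bookkeeping the paper records in its Case~1).
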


\begin{proof}
 We use induction on $2g+p$.
If $2g+p \le 1$, then $\operatorname{\normalfont A}_{g,p}$ is trivial and the proposed generating set is empty.
Thus we may assume that $2g+p \ge 2$, and that the conclusion holds for smaller pairs $(g,p)$.

\medskip

\noindent \textbf{Case 1.} $p=0$.

Here $g \ge 1$.

By Consequence~\ref{cons:z2c},
we have a
homomorphism $\Stab(\overline x_1 \overline y_1 x_1; \operatorname{\normalfont A}_{g,0}) \to \operatorname{\normalfont A}_{g{-}1, 1}$
 such that the kernel is~$\gen{\alpha_1}$, and such that $\alpha_{[2{\uparrow}g]} \cup
\beta_{[2{\uparrow} g]} \cup
\gamma_{[2{\uparrow}g]}$ is  mapped bijectively to
$\alpha_{[1{\uparrow}(g{-}1)]} \cup
\beta_{[1{\uparrow}(g{-}1)]} \cup
\gamma_{[1{\uparrow}(g{-}1)]}$.  The latter is a generating set of $\operatorname{\normalfont A}_{g{-}1,1}$
by the induction hypothesis. It follows that
$\Stab(\overline x_1 \overline y_1 x_1; \operatorname{\normalfont A}_{g,0})$ is generated by
\mbox{$\alpha_{[1{\uparrow}g]} \cup
\beta_{[2{\uparrow} g]} \cup
\gamma_{[2{\uparrow}g]}$}.

 By Theorem~\ref{thm:p0}, $\operatorname{\normalfont A}_{g,0}$
is generated by $\Stab(\overline x_1 \overline y_1 x_1; \operatorname{\normalfont A}_{g,0})
\cup \{\beta_1\}$.

Hence $\operatorname{\normalfont A}_{g,0}$  is generated by
$\alpha_{[1{\uparrow}g]} \cup
\beta_{[1{\uparrow} g]} \cup
\gamma_{[2{\uparrow}g]}$, as desired.

\noindent \textbf{Case 2.} $p\ge 1$.

It follows from Consequence~\ref{cons:z2b} that we can identify $\Stab(t_p;\operatorname{\normalfont A}_{g,p})$ with
$\operatorname{\normalfont A}_{g,p{-}\!\!1}$ in a natural way.  By the induction hypothesis,
$\Stab(t_p;\operatorname{\normalfont A}_{g,p})$ is generated by
$\sigma_{[2{\uparrow}(p{-}1)]} \cup \alpha_{[1{\uparrow}g]} \cup \beta_{[1{\uparrow}g]}
\cup  \gamma_{[\max(3-p,1){\uparrow}g]}$.  We consider two cases.

\medskip

\noindent \textbf{Case 2.1.} $p=1$.

 Here $g \ge 1$.
 By Theorem~\ref{thm:p1}, $\operatorname{\normalfont A}_{g,1}$
is generated by $\Stab(t_1; \operatorname{\normalfont A}_{g,1})
\cup \{\gamma_1\}$.

Hence, $\operatorname{\normalfont A}_{g,1}$  is generated by
$\alpha_{[1{\uparrow}g]} \cup
\beta_{[1{\uparrow} g]} \cup
\gamma_{[1{\uparrow}g]}$, as desired.

\medskip

\noindent \textbf{Case 2.2.} $p \ge 2$.

 By Theorem~\ref{thm:p2}, $\operatorname{\normalfont A}_{g,p}$
is generated by $\Stab(t_p; \operatorname{\normalfont A}_{g,p})
\cup \{\sigma_p\}$.

 Hence, $\operatorname{\normalfont A}_{g,p}$  is generated by
\mbox{$\sigma_{[2{\uparrow}p]} \cup \alpha_{[1{\uparrow}g]} \cup \beta_{[1{\uparrow}g]}
\cup \gamma_{[1{\uparrow}g]}$}, as desired.
\end{proof}

We next recall Humphries' result~\cite{Humphries} that  the $\alpha_{[3{\uparrow}g]}$ part is not needed,
and, hence,  the ADHL set suffices.

\begin{corollary}\label{cor:humpf}
$\operatorname{\normalfont A}_{g,p}$ is generated by
$
\sigma_{[2{\uparrow}p]} \cup \alpha_{[1{\uparrow}\min(2,g)]} \cup \beta_{[1{\uparrow}g]}
\cup  \gamma_{[\max(2-p,1){\uparrow}g]}.$
\end{corollary}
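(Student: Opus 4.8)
The plan is to derive this from Theorem~\ref{thm:main}. Since the ADL set is the union of the ADLH set with $\alpha_{[3{\uparrow}g]}$, and $\alpha_{[3{\uparrow}g]}$ is empty unless $g\ge 3$, it suffices to treat the case $g\ge 3$ and to show that each $\alpha_i$ with $i\in[3{\uparrow}g]$ lies in the subgroup $H$ of $\operatorname{\normalfont A}_{g,p}$ generated by the ADLH set. Observe that $\alpha_1$, $\alpha_2$, all the $\beta_j$, and all the $\gamma_j$ of admissible index already lie in $H$. I would then argue by induction on $i$, running from $i=3$ up to $i=g$, with inductive hypothesis $\alpha_{i-1}\in H$; this holds when $i=3$ because $\alpha_2\in H$.

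The core of the argument is an explicit identity in $\operatorname{\normalfont A}_{g,p}$ --- the algebraic incarnation of Humphries' lantern relation, which I would isolate as a ``Humphries' Identity'' --- that expresses $\alpha_i$ as a product of $\alpha_{i-1}^{\pm 1}$ together with finitely many of the ADLH generators of index near $i$ (such as $\beta_{i-1}$, $\beta_i$, $\gamma_i$, and possibly $\alpha_1$, $\alpha_2$ or a further $\gamma$, according to which lantern is chosen) and their inverses, each conjugated only by words lying in $H$. In geometric terms $\alpha_i$, $\beta_i$, $\gamma_i$ are induced by Dehn twists about curves $a_i$, $b_i$, $c_i$, and the identity is the lantern relation carried by a four-holed sphere embedded in a neighbourhood of the $(i{-}1)$-st and $i$-th handles. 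Since none of the automorphisms involved moves a $t$-letter or an $x$-letter of index outside a fixed bounded range, this is in truth a single relation in a free group of rank $4$ or $5$, independent of $g$ and $p$; I would record it once and then invoke it for each admissible $i$ after the obvious relabelling of the basis elements.

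Verifying the identity is routine: both sides are automorphisms of $F_{g,p}$, so it suffices to check that they agree on each element of the distinguished basis $t_{[1{\uparrow}p]}\cup x_{[1{\uparrow}g]}\cup y_{[1{\uparrow}g]}$, and since both sides fix all but boundedly many of these basis elements, only finitely many word reductions are needed. Granting the identity, its right-hand side lies in $H$ by the inductive hypothesis (which supplies $\alpha_{i-1}\in H$) together with the construction of $H$, so $\alpha_i\in H$; this closes the induction and hence proves the corollary. I expect the one genuine difficulty to be pinning down the correct word --- that is, selecting the right embedded lantern and conjugating it into standard position so that the resulting expression for $\alpha_i$ manifestly involves only $\alpha_{i-1}$ and ADLH generators; once that word has been written down, everything that remains is bookkeeping.
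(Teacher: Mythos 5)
Your overall strategy (reduce to $g\ge 3$, induct on $i$, and express $\alpha_i$ in terms of lower-index generators) is the right shape, but the proposal defers precisely the step that carries all of the content of the corollary, and the way you propose to fill it is off the mark. Humphries' argument, and the paper's algebraic translation of it, is \emph{not} a lantern-relation argument: it is a conjugation (``change of coordinates'') argument. The paper produces an explicit word in the ADLH generators, $\beta_1\gamma_2\beta_2\alpha_2\gamma_3\beta_3\beta_2\gamma_3\gamma_2\beta_2\beta_1\gamma_2\alpha_2\beta_2\gamma_3\beta_3$, checks by a chain of elementary computations that it carries $\overline x_1\overline y_1 x_1$ to $y_3$, and then needs a genuinely nontrivial algebraic input to conclude that any such element conjugates $\alpha_1$ to $\alpha_3$: namely Consequence~\ref{cons:z2c}, which gives $\Stab(\overline x_1\overline y_1 x_1;\operatorname{A}_{3,0})\simeq\gen{\alpha_1}\times\operatorname{A}_{2,1}$, so that the stabilizer centralizes $\alpha_1$ and the auxiliary element $\eta$ (with $\alpha_1\eta=\eta\alpha_3$) controls the whole coset of elements sending $\overline x_1\overline y_1 x_1$ to $y_3$. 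This is the algebraic surrogate for the topological fact that a homeomorphism taking one twisting curve to another conjugates the corresponding Dehn twists; your proposal contains no substitute for it, and without it merely knowing that some ADLH word has the right effect on one element of $F_{g,p}$ does not yield $\alpha_i\in H$.

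Concerning the identity itself: you assert its existence, attribute it to a lantern embedded near two consecutive handles, and declare verification routine, but the lantern relation cannot hold verbatim among the ADLH twists (their defining curves pairwise intersect at most once, whereas the three interior curves of a lantern pairwise intersect twice), so any lantern-based expression for $\alpha_i$ would involve twists about auxiliary curves whose membership in $H$ would itself have to be proved --- typically again by the conjugation principle you are trying to avoid. Allowing the factors to be ``conjugated by words lying in $H$'' makes the claimed identity so unspecific that nothing can be checked. In short, the one step you label as ``pinning down the correct word'' is exactly Humphries' theorem; the paper's proof consists of that word, the verification that it sends $\overline x_1\overline y_1 x_1$ to $y_3$, the element $\eta$ with $\alpha_1\eta=\eta\alpha_3$, and Consequence~\ref{cons:z2c}, followed by the index shift (which, note, expresses $\alpha_i$ as a conjugate of $\alpha_{i-2}$, not of $\alpha_{i-1}$, so the induction needs both $\alpha_{i-2}$ and $\alpha_{i-1}$ in $H$ --- harmless, since $\alpha_1,\alpha_2\in H$, but it is another detail your outline gets wrong). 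As it stands the proposal is a plan with the decisive lemma missing and misattributed, not a proof.
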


\begin{proof}
It is not difficult to check that there exists an element of  $\operatorname{\normalfont A}_{3,0}$ given by $$\eta \coloneq
\bigl( \begin{smallmatrix}
x_1 && y_1&&x_2&& y_2&& x_3&&y_3\\
\overline y_3  \overline x_3 \overline y_3  &&    \overline y_3^{x_3 y_3}
 && x_2^{ [x_3,y_3]}&&y_2^{ [x_3,y_3]}&& x_1^{[x_2,y_2][x_3,y_3]}&& y_1^{[x_2,y_2][x_3,y_3]}
\end{smallmatrix}\bigr),$$
and that
$(\overline x_1 \overline y_1 x_1)^\eta =   y_3  $, and that
 both $ \alpha_1 \eta$ and $ \eta \alpha_3$ equal
$$ \bigl( \begin{smallmatrix}
x_1 && y_1&&x_2&& y_2&& x_3&&y_3\\
\overline y_3  \overline x_3   &&    \overline y_3^{x_3 y_3}
 && x_2^{ [x_3,y_3]}&&y_2^{ [x_3,y_3]}&& x_1^{[x_2,y_2][x_3,y_3]}&& y_1^{[x_2,y_2][x_3,y_3]}
\end{smallmatrix}\bigr).$$
By Consequence~\ref{cons:z2c}, each element of
$\Stab(\overline x_1 \overline y_1 x_1;\operatorname{\normalfont A}_{3,0})$ centralizes~$\alpha_1$,
and, hence,   each element of $\Stab(\overline x_1 \overline y_1 x_1;\operatorname{\normalfont A}_{3,0})\eta$
conjugates $\alpha_1$ into $\alpha_3$.
Notice that
  $\Stab(\overline x_1 \overline y_1 x_1;\operatorname{\normalfont A}_{3,0})\eta$
is the set of elements of $\operatorname{\normalfont A}_{3,0}$ with
\mbox{$\overline x_1 \overline y_1 x_1 \mapsto y_3$}.

One can  compute
\begin{align*}
& \overline x_1 \overline y_1  x_1
 &&\overset{\null^{\displaystyle\beta_1}}{\mapsto} &&\overline x_1 \overline y_1
&&\overset{\null^{\displaystyle\gamma_2}}{\mapsto} &&\overline x_1 \overline x_2 \overline y_2 x_2
&&\overset{\null^{\displaystyle\beta_2}}{\mapsto} &&\overline x_1 \overline x_2 \overline y_2
&&\overset{\null^{\displaystyle\alpha_2}}{\mapsto}&&\overline x_1 \overline x_2 =
 \\  &\overline x_1 \overline x_2 &&\overset{\null^{\displaystyle\gamma_3}}{\mapsto} && \overline x_1 \overline x_2 y_2 \overline x_3 \overline y_3 x_3
&&\overset{\null^{\displaystyle\beta_3}}{\mapsto}&& \overline x_1 \overline x_2 y_2 \overline x_3 \overline y_3
&&\overset{\null^{\displaystyle\beta_2}}{\mapsto}&&\overline x_1   y_2 \overline x_3 \overline y_3
&&\overset{\null^{\displaystyle\gamma_3}}{\mapsto}&&  \overline x_1  \overline x_3 =
\\  &\overline x_1 \overline x_3 &&\overset{\null^{\displaystyle\gamma_2}}{\mapsto}&&    \overline x_1 y_1  \overline x_2\overline y_2  x_2 \overline x_3
&&\overset{\null^{\displaystyle\beta_2}}{\mapsto}&&\overline x_1 y_1  \overline x_2\overline y_2  \overline x_3
&&\overset{\null^{\displaystyle\beta_1}}{\mapsto} &&   y_1  \overline x_2 \overline y_2 \overline x_3
&& \overset{\null^{\displaystyle\gamma_2}}{\mapsto}&& \overline x_2   \overline x_3 =
\\  &\overline x_2 \overline x_3 &&\overset{\null^{\displaystyle\alpha_2}}{\mapsto}&&  \overline x_2y_2  \overline x_3
&&\overset{\null^{\displaystyle\beta_2}}{\mapsto} &&y_2 \overline x_3
&& \overset{\null^{\displaystyle\gamma_3}}{\mapsto} && \overline x_3 y_3
&&\overset{\null^{\displaystyle\beta_3}}{\mapsto} &&y_3;
\end{align*}
this is the algebraic translation of~\cite[Figure~2]{Humphries}\vspace{.5mm}.
We then see that, as in~\cite{Humphries},
$ \textstyle \alpha_1^{\textstyle \beta_1 \gamma_2\beta_2 \alpha_2 \gamma_3 \beta_3  \beta_2 \gamma_3
 \gamma_2 \beta_2 \beta_1 \gamma_2 \alpha_2 \beta_2 \gamma_3 \beta_3} = \alpha_3.$
 By shifting the indices upward, we  see that $\alpha_{[3{\uparrow}g]}$ can be
removed from the ADL set and still leave a  generating set.
\end{proof}

We have now completed our objective.  For completeness, we conclude the article with
an elementary review of some  classic results.

\section{Some background on mapping-class groups}\label{sec:mcg}

\begin{notation}
Let us define $F_{g,p-1} \coloneq  \gp{t_{[1 \uparrow p]}
 \cup x_{[1 \uparrow g ]} \cup  y_{[1 \uparrow g ]}}
{\Pi t_{\bbl1 p{\downarrow} 1 \bbr1 }\Pi [x,y]_{\bbl1 1{\uparrow} g \bbr1 }}$.

Then   $F_{g,p} =\gp{t_{[1 \uparrow (p+1)]}
 \cup x_{[1 \uparrow g ]} \cup  y_{[1 \uparrow g ]}}
{\Pi t_{\bbl1 (p+1){\downarrow} 1 \bbr1 }\Pi [x,y]_{\bbl1 1{\uparrow} g \bbr1 }}$, and we still have
$F_{g,p}  =
 \gp{t_{[1 \uparrow p]}
 \cup x_{[1 \uparrow g ]} \cup  y_{[1 \uparrow g ]}}
{\quad}$ and here $\overline t_{p+1} = \Pi t_{\bbl1 p{\downarrow} 1 \bbr1 }\Pi [x,y]_{\bbl1 1{\uparrow} g \bbr1 }$.
\hfill\qed
\end{notation}

\begin{definitions}\label{defs:bound1}
We construct an
 orientable  surface $\mathbf{S}_{g,1,p}$, of genus $g$ with $p$ punctures
and one boundary component, as follows.
We start with  a vertex which will be the basepoint.   We attach
a set of $2g{+}p{+}1$ oriented edges
  $t_{[1 \uparrow (p{+}1)]}  \cup x_{[1 \uparrow g ]} \cup  y_{[1 \uparrow g ]}$.
We attach a $(4g{+}p{+}1)$-gon  with  counter-clockwise  boundary label
  $\textstyle   \Pi t_{\bbl1 (p+1){\downarrow} 1 \bbr1 }\Pi [x,y]_{\bbl1 1{\uparrow} g \bbr1 } \in
 \gp{t_{[1 \uparrow (p+1)]} \cup x_{[1 \uparrow g ]} \cup  y_{[1 \uparrow g ]}}
{\quad}$.
For each $j \in [1{\uparrow}p]$,  we attach a punctured
disk  with counterclockwise boundary label $\overline t_j$. This completes the definition
 of~$\mathbf{S}_{g,1,p}$.  Notice that the boundary of~$\mathbf{S}_{g,1,p}$ is the edge labelled~$t_{p+1}$.

We may identify $\pi_1(\mathbf{S}_{g,1,p}) = F_{g,p}$.
We call $\Stab( [\overline t]_{[1{\uparrow}p]}
\cup \{\overline t_{p+1}\} ; \Aut F_{g,p})$ the  \textit{algebraic}   \textit{mapping-class group of
$\mathbf{S}_{g,1,p}$}.  This is  our group $\operatorname{\normalfont A}_{g,p}$.  (In~\cite{DF3},
 $\operatorname{\normalfont A}_{g,p}$ is denoted  $\Aut^+_{g,0,p\perp \hat 1}$, and,
in~\cite[Proposition~7.1(v)]{DF3}, the latter group is
shown to be isomorphic to what  is there called
the orientation-preserving algebraic mapping-class group of $\mathbf{S}_{g,1,p}$,
denoted $\Out^+_{g,1,p}$.)

Let $\Aut \mathbf{S}_{g,1,p} $ denote the group of self-homeomorphisms of
 $\mathbf{S}_{g,1,p}$ which stabilize each  point on the boundary.
The quotient of $\Aut \mathbf{S}_{g,1,p}$ modulo the group of elements of $\Aut  \mathbf{S}_{g,1,p}$
which are isotopic to the identity map through a boundary-fixing isotopy
 is
called the (topological)  \textit{mapping-class group of
$\mathbf{S}_{g,1,p}$}, denoted $\operatorname{\normalfont M}^{\text{top}}_{g,1,p}$.

Then $\Aut \mathbf{S}_{g,1,p} $  acts on  $F_{g,p}$ stabilizing
$[\overline t]_{[1{\uparrow}p]}
\cup \{\overline t_{p+1}\}$, and we have
  a homomorphism
$
\operatorname{\normalfont M}^{\text{top}}_{g,1,p} \to \operatorname{\normalfont A}_{g,p}.$
\hfill\qed
\end{definitions}

\begin{definitions}\label{defs:bound0}  Let $\mathbf{S}_{g,0,p}$  denote
 the
quotient space obtained from   $\mathbf{S}_{g,1,p}$
 by   collapsing  the boundary to a point.
Then $\mathbf{S}_{g,0,p}$ is an
 orientable  surface of genus $g$ with $p$ punctures.

We may identify $\pi_1(\mathbf{S}_{g,0,p}) = F_{g,p-1}$.
We define the \textit{algebraic mapping-class group of $\mathbf{S}_{g,0,p}$} as
 $\operatorname{\normalfont M}^{\text{alg}}_{g,0,p} \coloneq \Stab([t]_{[1{\uparrow}p]}
\cup [\overline t]_{[1{\uparrow}p]} ;\Out F_{g,p-1}).$
 (In~\cite{DF3}, if $(g,p) \ne (0,0)$, $(0,1)$,  then
 $\operatorname{\normalfont M}^{\text{alg}}_{g,0,p}$ is  denoted $\Out_{g,0,p}$.)

Let $\Aut \mathbf{S}_{g,0,p} $ denote the group of self-homeomorphisms of
 $\mathbf{S}_{g,0,p}$.
The quotient of $\Aut \mathbf{S}_{g,0,p}$ modulo   the
 group of elements
which are isotopic to the identity map  is
called the (topological)  \textit{mapping-class group of
$\mathbf{S}_{g,0,p}$},  denoted $\operatorname{\normalfont M}^{\text{top}}_{g,0,p}$.

Then
  $\Aut \mathbf{S}_{g,0,p}$   acts on  $F_{g,p-1}/{\sim}$ \,
stabilizing $[t]_{[1{\uparrow}p]} \cup [\overline t]_{[1{\uparrow}p]}$.
This action factors through  a natural homomorphism
$\Aut \mathbf{S}_{g,0,p} \to \Out F_{g,p-1}$,  and we have a homomorphism
 $\operatorname{\normalfont M}^{\text{top}}_{g,0,p} \to \operatorname{\normalfont M}^{\text{alg}}_{g,0,p}$.

Consider the simply-connected case, that is,
  $F_{g,p-1} = 1$.  Then,  $(g,p)$ is either $(0,0)$ or $(0,1)$,
corresponding to the sphere  $\mathbf{S}_{0,0,0}$  and the open disk $\mathbf{S}_{0,0,1}$.
 Here,  $\operatorname{\normalfont M}^{\text{alg}}_{g,0,p}$  is trivial, while
$\operatorname{\normalfont M}^{\text{top}}_{g,0,p}$ has order two,
with one mapping class consisting of the reflections.
\hfill\qed
\end{definitions}

It has been the work of many years to show  that $\operatorname{\normalfont M}^{\text{top}}_{g,1,p} = \operatorname{\normalfont A}_{g, p}$ and to show that both are
generated by the ADLH set.  Also,
 if $(g,p) \ne (0,0),\, (0,1)$, then
\mbox{$\operatorname{\normalfont M}^{\text{top}}_{g,0,p} = \operatorname{\normalfont M}^{\text{alg}}_{g,0,p}$}, and their orientation-preserving
subgroups are generated by the   ADLH set.
The proofs developed in stages,  roughly as follows, although we are omitting many important results.

\begin{list}{$\bullet$} {}
\item In 1917, Nielsen~\cite{Nielsen0} proved that  if $(g,p) = (1,0)$ then the ADL set generates~$\operatorname{\normalfont A}_{g,p}$.
\item In 1925, Artin~\cite{A} introduced \textit{braid twists}, and proved that if $g=0$
 then  the ADL set generates $\operatorname{\normalfont A}_{g,p}$  and
 $\operatorname{\normalfont M}_{g,1,p}^\text{top}=\operatorname{\normalfont A}_{g,p}$.
\item In 1927, Nielsen~\cite{Nielsen2} presented unpublished results of Dehn and   proved  that  if  \mbox{$p=0$}
 then \mbox{$\operatorname{\normalfont M}_{g,1,p}^\text{top}$ maps onto $\operatorname{\normalfont A}_{g,p}$}, and that if $p\le 1$
then $\operatorname{\normalfont M}_{g,0,p}^\text{top}$ maps onto $\operatorname{\normalfont M}_{g,0,p}^\text{alg}$.
\item In 1928, Baer~\cite{Baer} proved that if $p=0$ then  $\operatorname{\normalfont M}_{g,0,p}^\text{top}$
embeds in $\operatorname{\normalfont M}_{g,0,p}^\text{alg}$ for all  $g \ge 2$.
\item  In 1934, Magnus~\cite{Magnus} proved that  if $g=1$  then
$\operatorname{\normalfont M}_{g,1,p}^\text{top}=\operatorname{\normalfont A}_{g,p}$ and
\mbox{$\operatorname{\normalfont M}_{g,0,p}^\text{top} = \operatorname{\normalfont M}_{g,0,p}^\text{alg}$}.
\item In 1939, Dehn~\cite{Dehn1} introduced what are now called \textit{Dehn twists}, and
proved, among other results,
that a finite number of Dehn twists generate the orientation-preserving subgroup of $\operatorname{\normalfont M}_{g,0,p}^\text{top}$.;
see~\cite[Section 10.3.c]{Dehn1}.
\item In 1964, Lickorish~\cite{Lickorish}  rediscovered and refined Dehn's 1939 methods and proved that if $p=0$ then
the ADL set generates the orientation-preserving subgroup of  $\operatorname{\normalfont M}_{g,0,p}^\text{top}$.
\item In 1966, Epstein~\cite{Epstein} refined Baer's 1928 methods and proved that
$\operatorname{\normalfont M}_{g,1,p}^\text{top}$ embeds in $\operatorname{\normalfont A}_{g,p}$ and that, if $(g,p) \ne (0,0), \, (0,1)$,
then $\operatorname{\normalfont M}_{g,0,p}^\text{top}$ embeds in $\operatorname{\normalfont M}_{g,0,p}^\text{alg}$.
\item In 1966, Zieschang~\cite[Satz~4]{Z66a},~\cite[Theorem~5.7.1]{ZVC} proved that
\mbox{$\operatorname{\normalfont M}_{g,1,p}^\text{top} \hskip-2.4pt =  \hskip-2.4pt\operatorname{\normalfont A}_{g,p}$}
and that, for \mbox{$(g,p) \ne (0,0), \, (0,1)$},
$\operatorname{\normalfont M}_{g,0,p}^\text{top} = \operatorname{\normalfont M}_{g,0,p}^\text{alg}$, and called
these results
the Baer-Dehn-Nielsen Theorem.
\item In 1979, Humphries\cite{Humphries} showed that  the ADHL set  generates the same group as the ADL set.
\item In 2001, Labru\`ere and Paris~\cite[Proposition 2.10(ii) with $r=0$]{LP} used
some of the foregoing results and a theorem of Birman~\cite{Birman}
to prove that the ADLH set generates $\operatorname{\normalfont M}_{g,1,p}^\text{top}$.
\end{list}

\section{The topological source of the ADL set}\label{sec:boundary}

In this section,
we shall recall the definitions of   Dehn twists and braid twists
and  see that the ADL set lies in
$\operatorname{\normalfont M}^{\text{top}}_{g,1,p}$.  The diagram~\cite[Figure 12]{LP}
 illustrates the elements of the  ADLH set acting on~$\mathbf{S}_{g,1,p}$.

\begin{definitions}\label{defs:ann}
Let $\mathbf{A}  \coloneq [0,1] \times  (\reals/\integers)$,  a  closed annulus.
Let $z$ denote the oriented   boundary component
$ \{1 \}  \times (\reals/\integers)$ with
basepoint $(1, \integers)$.  Let  $z'$ denote the oriented  boundary component
$ \{0 \}  \times (\reals/\integers)$ with
basepoint $(0,\integers)$.  Let $e$ denote the edge  $ [0,1]\times \{\integers\}$
oriented from $(1,\integers )$ to $(0,\integers)$.

The \textit{model Dehn twist} is the  self-homeomorphism
 $\tau$  of $\mathbf{A}  =  [0,1] \times  (\reals/\integers) $   given by
\mbox{$(x,y +\integers ) \mapsto (x,  -x +y +\integers)$.}
Notice that $\tau$ fixes every point of  $z' \cup z$,
and $\tau$ acts on $e$ as \mbox{$(x, \integers ) \mapsto (x,  1-x +\integers)$.}
Thus $e^\tau \overline e \,\overline z$ bounds a triangle; hence
 $e^\tau $ is homotopic to $ze$.

Suppose now that we have an embedding of $\mathbf{A}$ in a surface $\mathbf{S}$.
Then the image of  $z$ is an oriented simple closed curve $\mathbf{c}$,
and  $ \tau$ induces a self-homeo\-morphism of $\mathbf{S}$ which is the identity outside the copy of
$\mathbf{A}$.  We call the resulting map of $\mathbf{S}$ a (left) \textit{Dehn twist about} $\mathbf{c}$;
see~\cite{Dehn1}.
\hfill\qed
\end{definitions}

Recall the construction of $\mathbf{S}_{g,1,p}$ in Definitions~\ref{defs:bound1}.

\begin{examples}
Let $i \in[1{\uparrow}g]$.

Recall that $\overline x_i \overline y_i x_i$ is a
subword of the boundary label
of the $(4g{+}p{+}1)$-gon  used in the construction of
 $\mathbf{S}_{g,1,p}$.
We place the annulus $\mathbf{A}$ on $\mathbf{S}_{g,1,p}$
with the image of $z$ along the boundary edge labelled $\overline y_i$.
The  image of~$z'$  enters  the $(4g{+}p{+}1)$-gon  near the end of  the boundary edge labelled
 $\overline x_{i}$,
 travels   near $z = \overline y_i$,
and exits near the beginning of $x_{i}$, completing the cycle.
The only oriented edge  of the one-skeleton of  $\mathbf{S}_{g,1,p}$   that crosses $\mathbf{A}$ from
right to left  is
$x_{i}$, near its beginning.
Incident to the basepoint of $z$ are,  in clockwise order,
 the end of $ z$,   the beginning  of
  $  x_{i }$,  and  the beginning  of $z$.
The Dehn twist about $\overline y_i$   induces
$\left(\begin{smallmatrix}
x_{i}\\ \overline y_i x_i \end{smallmatrix}\right)$ on $\pi_1(\mathbf{S}_{g,1,p}) = F_{g,p}$.
   Hence $\alpha_i \in \operatorname{\normalfont M}^{\text{top}}_{g,1,p}$.

Recall that $\overline y_i x_i y_i$  is a
subword of the boundary label
of the $(4g{+}p{+}1)$-gon  used in the construction of
 $\mathbf{S}_{g,1,p}$.
We place the annulus $\mathbf{A}$ on $\mathbf{S}_{g,1,p}$
with the image of $z$ along the boundary edge labelled $x_i$.
The  image of~$z'$  enters  the $(4g{+}p{+}1)$-gon  near the end of  the boundary edge labelled
 $\overline y_{i}$,
 travels   near $z = x_i$,
and exits near the beginning of $y_{i}$, completing the cycle.
The only oriented edge  of the one-skeleton of  $\mathbf{S}_{g,1,p}$   that crosses $\mathbf{A}$ from
right to left  is
$y_{i}$, near its beginning.
Incident to the basepoint of $z$ are,  in clockwise order,
 the end of $ z$,   the beginning  of
  $  y_{i }$,  and  the beginning  of $z$.
The Dehn twist about $x_i$   induces
$\left(\begin{smallmatrix}
y_{i}\\ x_i y_i \end{smallmatrix}\right)$ on $\pi_1(\mathbf{S}_{g,1,p}) = F_{g,p}$.
    Hence $\beta_i \in \operatorname{\normalfont M}^{\text{top}}_{g,1,p}$.
\hfill\qed
\end{examples}

\begin{example}
Let $i \in[2{\uparrow}g]$.  Recall that $\overline x_{i-1} \overline y_{i-1}
x_{i-1} y_{i-1}\overline x_i \overline y_i x_i$  is a
subword of the boundary label
of the $(4g{+}p{+}1)$-gon  used in the construction of
 $\mathbf{S}_{g,1,p}$.
We place the annulus $\mathbf{A}$ on $\mathbf{S}_{g,1,p}$  with the image of $z$  marking out,
in the $(4g{+}p{+}1)$-gon,  a pentagon
with boundary label $y_{i-1}  \overline x_i  \overline y_i  x_i  z$.
The  image of~$z'$
\begin{list}{$\bullet$}{}
\item  enters (the $(4g{+}p{+}1)$-gon) near the end of (the boundary edge labelled) $\overline y_{i-1}$,
 travels counter-clockwise  near the basepoint,
  exits near the beginning of $x_{i-1}$,
\item  enters near the end of $\overline x_{i-1}$,
  travels counter-clockwise    near the basepoint,
 exits near the beginning of $\overline y_{i-1}$,
\item  enters near the end of $  y_{i-1}$,
  travels counter-clockwise near the basepoint,
 exits  near the beginning  of $\overline x_{i}$,
\item enters  near the end of  $x_{i}$,
 travels   near $z$, passing $\overline y_i$, $\overline x_i$,
 exits near the beginning of $y_{i-1}$,
\end{list}
completing the cycle.
The entrances  correspond to
 $\overline y_{i-1}{\rightsquigarrow}\overline x_{i-1}
{\rightsquigarrow} y_{i-1}{\rightsquigarrow} x_{i}$  in the extended Whitehead graph.
The oriented edges of the one-skeleton of  $\mathbf{S}_{g,1,p}$   that cross $\mathbf{A}$ from
right to left are the exits:
$x_{i-1}$ near its beginning,
 $ \overline y_{i-1}$ near its beginning,  $\overline x_{i}$ near its beginning, and
$  y_{i-1}$ near its beginning.
Incident to the basepoint of $z$ are,  in clockwise order,
 the end of $ z$,  and the beginnings of
  $  y_{i-1}$, $x_{i-1}$,  $ \overline y_{i-1}$, $\overline x_{i}$, and $z$.
Let $w_i  \coloneq  y_{i-1} \overline x_i \overline y_i x_i$.
The  Dehn twist about $\overline w_i$  induces  \mbox{ $ \bigl(\begin{smallmatrix}
 x_{i-1} && y_{i-1} && x_{i}\\  \overline w_i x_{i-1} && y_{i-1}^{w_i} && x_{i}w_{i}
\end{smallmatrix}\bigr)$} on $\pi_1(\mathbf{S}_{g,1,p}) = F_{g,p}$.
 Hence $\gamma_i \in \operatorname{\normalfont M}^{\text{top}}_{g,1,p}$.
\hfill\qed
\end{example}

\begin{example}
Suppose that $\min(g,p,1) = 1$.
 Recall that $
t_{1}\overline x_1 \overline y_1 x_1$  is a
subword of the boundary label
of the $(4g{+}p{+}1)$-gon  used in the construction of
 $\mathbf{S}_{g,1,p}$ and that $\overline t_1$ is the boundary label of the $\overline t_1$-disk.
We place the annulus $\mathbf{A}$ on $\mathbf{S}_{g,1,p}$  with  $z$   marking out,
in the $(4g{+}p{+}1)$-gon,  a pentagon
with boundary label $t_{ 1}  \overline x_1  \overline y_1  x_1  z$.
The image of $z'$
\begin{list}{$\bullet$}{}
\item   enters the $\overline t_1$-disk near the end of $\overline t_{1}$,
  travels counter-clockwise  near the basepoint,
 exits near the beginning of $\overline t_{1}$,
\item enters the $(4g{+}p{+}1)$-gon  near the end of $t_1$,
 travels counter-clockwise  near the basepoint,
  exits near the beginning of $\overline x_{1}$,
\item   enters the $(4g{+}p{+}1)$-gon  near the end of   $x_1$,  travels near $z$ passing
$\overline y_1$,~$\overline x_1$,
 exits near the beginning  of $t_1$,
\end{list}
completing the cycle. The entrances   correspond to
$ \overline t_{1}{\rightsquigarrow}   t_{1}
 {\rightsquigarrow} x_{1}$  in the extended Whitehead graph.
The oriented edges of the one-skeleton of  $\mathbf{S}_{g,1,p}$   that cross $\mathbf{A}$ from
right to left are the exits:
 $\overline t_1$ near its beginning,    $\overline x_{1}$ near its beginning,
and  $  t_{1}$ near its beginning.
Incident to the basepoint of $z$ are,  in clockwise order,
the end of $ z$,  and the beginnings of
  $  t_{1}$,    $ \overline t_{1}$, $\overline x_{1}$, and $z$.
Let \mbox{$w_1  \coloneq  t_{1} \overline x_1 \overline y_1 x_1$}.
The Dehn twist about $\overline w_1$  induces  \mbox{ $ \bigl(\begin{smallmatrix}
  t_{ 1} && x_{1}\\    t_{ 1}^{w_1} && x_{1}w_{1}
\end{smallmatrix}\bigr)$} on $\pi_1(\mathbf{S}_{g,1,p}) = F_{g,p}$.
 Hence \mbox{$\gamma_1 \in \operatorname{\normalfont M}^{\text{top}}_{g,1,p}$}.
\hfill\qed
\end{example}

\begin{definitions}  Recall the annulus $\mathbf{A} = [0,1] \times  (\reals/\integers)$ of Definitions~\ref{defs:ann}.
Let $\mathbf{D}$  denote the space that is obtained from  $\mathbf{A}$ by deleting
the two points
  $p_2 \coloneq ( \frac{1}{2},\integers)$ and $p_1 \coloneq ( \frac{1}{2}, \frac{1}{2} + \integers)$
and collapsing to a point the boundary component~$z' = \{0\} \times (\reals/\integers)$.
We take $p_0 \coloneq ( 1, \integers)$
as the basepoint of $\mathbf{D}$.

 Thus $\mathbf{D}$ is a closed disk with
two punctures, and the model Dehn twist
$\tau$ has an induced action on $\mathbf{D}$, called the \textit{model braid twist}.
We now determine the induced action on $\pi_1(\mathbf{D})$.

Let $z_2$ denote an infinitesimal clockwise circle  around  $p_2$,
and  let $z_1 \coloneq z_2^\tau$, an infinitesimal clockwise circle  around  $p_1$.
Then $\tau$ interchanges $z_2$ and $z_1$.
Let $e_2$ denote the oriented subedge of  $\overline e$ from $z_2$ to $p_0$
starting at a point $p_2'$ on $z_2$.
Let $e_1 \coloneq e_2^\tau $, an oriented subedge of $\overline e^\tau$ from
$z_1$ to $p_0$ starting at
$p_1' \coloneq p_2'^\tau$ on $z_1$.
 Then $\tau$ interchanges $p_1'$ and~$p_2'$, and
 acts on $e_1$ as \mbox{$(x,1-x+\integers) \mapsto (x,2 - 2x + \integers)$}.
Here, $e_1^\tau $ is an oriented edge from $p_2'$ to $p_0$ such that
$  e_1^\tau z\overline e_2  $ bounds a triangle;
hence, $e_1^\tau$ is homotopic to~$e_2 \overline z$.

We view $z_2^{ e_2}$ and $z_1^{ e_1}$  as  closed paths,
and then $z_2^{ e_2}z_1^{ e_1}z$  bounds a disk  in $\mathbf{D}$.
Now $\pi_1(\mathbf{D}) = \gp{z_2^{ e_2}, z_1^{ e_1}, z}
{z_2^{ e_2}z_1^{e_1}z} =  \gp{z_2^{ e_2}, z_1^{   e_1} }
{\quad}$, and the induced action of $\tau$ on $\pi_1(\mathbf{D})$ is given by
$z_2^{  e_2} \mapsto z_1^{  e_1}$ and
$z_1^{  e_1} \mapsto z_2^{  e_2 \, \overline z} =
z_2^{e_2 \,  z_2^{e_2} z_1^{e_1}} =
(z_2^{e_2})^{z_1^{e_1}} $.

Suppose that we have an embedding of $\mathbf{D}$
in a surface $\mathbf{S}$ which carries punctures to punctures.
Then  $ \tau$ induces a self-homeo\-morphism of
$\mathbf{S}$
which is the identity outside the copy of
$\mathbf{D}$.  The resulting map of $\mathbf{S}$ is  called a \textit{braid twist}; see~\cite{A}.
\hfill\qed
\end{definitions}

\begin{example} Let $j \in [2{\uparrow}p]$.
We   place the twice-punctured disk $\mathbf{D}$ on $\mathbf{S}_{g,1,p}$  with the image of
 $z$  marking out,
in the $(4g{+}p{+}1)$-gon,  a triangle
with boundary label $t_{j} t_{j-1}  z$.  This is possible since   $z$ now bounds a twice-punctured disk in
$\mathbf{S}_{g,1,p}$.  Here $t_j$ is homotopic to $z_2^{  e_2}$ and
$t_{j-1}$ is homotopic to $z_1^{  e_1}$.
 The resulting braid twist of $\mathbf{S}_{g,1,p}$
 induces $\bigl(\begin{smallmatrix}
t_j  && t_{j-1} \\    t_{j-1}  &&   t_{j}^{t_{j-1}}
\end{smallmatrix}\bigr)$.
 Hence $\sigma_j \in \operatorname{\normalfont M}^{\text{top}}_{g,1,p}$.
\hfill \qed
\end{example}

We now see that the ADL set lies in  $\operatorname{\normalfont M}^{\text{top}}_{g,1,p}$.
By Theorem~\ref{thm:main},
the homomorphism $\operatorname{\normalfont M}^{\text{top}}_{g,1,p} \to \operatorname{\normalfont A}_{g,p}$ is surjective;
that is, by using Zieschang's proof, we have recovered Zieschang's
result~\cite[Satz~4]{Z66a},~\cite[Theorem~5.7.1]{ZVC}.
 Assuming Epstein's result~\cite{Epstein},
we now have $\operatorname{\normalfont M}^{\text{top}}_{g,1,p} = \operatorname{\normalfont A}_{g,p}$,
and both are generated by the ADLH set.

\section{Collapsing the boundary}\label{sec:noboundary}

In this section we review Zieschang's algebraic proof of a result of
Nielsen.  We then describe a generating set for $\operatorname{\normalfont M}^{\text{alg}}_{g,0,p}$  which
 lies in the image of $\operatorname{\normalfont M}^{\text{top}}_{g,0,p}$.

\begin{definitions}\label{defs:zeta}  Recall
 $F_{g,p-1}  = \gp{t_{[1 \uparrow p]}
 \cup x_{[1 \uparrow g ]} \cup  y_{[1 \uparrow g ]}}
{\Pi t_{\bbl1 p{\downarrow} 1 \bbr1 }\Pi [x,y]_{\bbl1 1{\uparrow} g \bbr1 }}$.

Let
  \mbox{$\zeta \in \Aut F_{g,p-1}$}   be defined by
$$\forall i \in [1{\uparrow}g]  \quad\hskip-4pt x_i^\zeta \coloneq  y_{g+1-i} ,
\quad\hskip-4pt  y_i^\zeta \coloneq   x_{g+1-i} , \quad\hskip-4pt  \forall j \in [1{\uparrow}p]
 \quad \hskip-4pt  t_j^\zeta \coloneq \overline t_{p+1-j}.$$
We then have the outer automorphism $\breve \zeta \in \operatorname{\normalfont M}^{\text{alg}}_{g,0,p}$.
\hfill\qed
\end{definitions}

\begin{theorem}\label{thm:niel} For $g$, $p \in [0{\uparrow}\infty[$\,,
$\operatorname{\normalfont M}_{g,0,p}^{\text{\normalfont alg}}$ is generated by the natural image of
$\operatorname{\normalfont A}_{g,p}$ together with $\breve \zeta$.  Hence,
$\operatorname{\normalfont M}_{g,0,p}^{\text{\normalfont alg}}$ is generated
by  the image of the ADLH set together with $\breve \zeta$.
\end{theorem}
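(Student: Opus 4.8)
The plan is to transcribe Zieschang's argument \cite[Theorem~5.6.1]{ZVC} into the present notation. Write $V_0 \coloneq \Pi t_{\bbl1 p{\downarrow} 1 \bbr1 } \Pi [x,y]_{\bbl1 1{\uparrow} g \bbr1 } \in F_{g,p}$ and let $q\colon F_{g,p} \onto F_{g,p-1} = F_{g,p}/\langle\langle V_0\rangle\rangle$ be the quotient map. Since every element of $\operatorname{\normalfont A}_{g,p}$ fixes $V_0$, it descends through $q$ to an automorphism of $F_{g,p-1}$ lying in $\Stab([t]_{[1{\uparrow}p]};\Out F_{g,p-1}) \subseteq \operatorname{\normalfont M}^{\text{alg}}_{g,0,p}$, and this is the natural image of $\operatorname{\normalfont A}_{g,p}$. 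I would first record the elementary fact that, conversely, \emph{any} $\widetilde\psi \in \Aut F_{g,p}$ which fixes $V_0$ and permutes $\{[t_j],[\overline t_j] : j \in [1{\uparrow}p]\}$ already lies in $\operatorname{\normalfont A}_{g,p}$: on abelianising, $H_1(F_{g,p})$ is free on the classes of the letters $t_{[1{\uparrow}p]} \cup x_{[1{\uparrow}g]} \cup y_{[1{\uparrow}g]}$, the class of $V_0$ is $\sum_j [t_j]$, and $\widetilde\psi$ fixes it, so the signed permutation induced by $\widetilde\psi$ on the independent classes $[t_1],\dots,[t_p]$ preserves their sum and hence has all signs equal to $+1$; thus $\widetilde\psi$ permutes $[t]_{[1{\uparrow}p]}$ itself.

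Given $\breve\psi \in \operatorname{\normalfont M}^{\text{alg}}_{g,0,p}$, I would pick a representative $\psi \in \Aut F_{g,p-1}$; it permutes $\{[t_j],[\overline t_j]\}$, so it sends each $[t_j]$ to some $[t_{j'}]^{\pm 1}$. For the lifting, I would choose, for each of the free generators $t_{[1{\uparrow}p]} \cup x_{[1{\uparrow}g]} \cup y_{[1{\uparrow}g]}$ of $F_{g,p}$, an $F_{g,p}$-element mapping under $q$ to the $\psi$-image of its $q$-class, taking the lift of the image of $t_j$ to be an honest $F_{g,p}$-conjugate of $t_{j'}^{\pm 1}$; this defines an endomorphism $\psi^{\ast}$ of $F_{g,p}$ which induces $\psi$ on $F_{g,p-1}$, permutes $\{[t_j],[\overline t_j]\}$, and has $V_0^{\psi^{\ast}} \in \langle\langle V_0\rangle\rangle$. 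The crux — and this is exactly where the machinery of Section~\ref{sec:Z1} enters Zieschang's proof — is that the lift can be taken \emph{surjective}, hence, being an endomorphism of a free group of rank $2g+p$, an \emph{automorphism}; concretely one would feed the tuple of images into the Whitehead-graph reduction behind Consequence~\ref{cons:z1} and then invoke Consequence~\ref{cons:z2a}. Granting surjectivity, $(\langle\langle V_0\rangle\rangle)^{\psi^{\ast}} = \langle\langle V_0^{\psi^{\ast}}\rangle\rangle \subseteq \langle\langle V_0\rangle\rangle$, and since $F_{g,p-1}$ is Hopfian (free when $p\ge 1$, a surface group when $p=0$) this inclusion is an equality; so by the theorem of Magnus on when two single-element normal closures in a free group coincide, $V_0^{\psi^{\ast}}$ is $F_{g,p}$-conjugate to $V_0^{\pm 1}$. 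Post-composing $\psi^{\ast}$ with a suitable inner automorphism of $F_{g,p}$ — which changes neither the induced outer automorphism of $F_{g,p-1}$ nor the property of permuting $\{[t_j],[\overline t_j]\}$ — I may assume $V_0^{\psi^{\ast}} \in \{V_0,\overline V_0\}$.

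To finish: if $V_0^{\psi^{\ast}} = V_0$, then $\psi^{\ast} \in \operatorname{\normalfont A}_{g,p}$ by the first paragraph, and it induces $\breve\psi$. If instead $V_0^{\psi^{\ast}} = \overline V_0$, let $Z \in \Aut F_{g,p}$ be given on the basis by $x_i \mapsto y_{g+1-i}$, $y_i \mapsto x_{g+1-i}$, $t_j \mapsto \overline t_{p+1-j}$ (the formulas defining $\zeta$); then $V_0^{Z}$ is $F_{g,p}$-conjugate to $\overline V_0$, the map $Z$ permutes $\{[t_j],[\overline t_j]\}$, and $Z$ induces $\zeta$ on $F_{g,p-1}$, so after an inner adjustment $Z\psi^{\ast} \in \operatorname{\normalfont A}_{g,p}$ and it induces $\zeta\psi$, i.e.\ represents $\breve\zeta\breve\psi$; as $\breve\zeta^{2} = 1$ this puts $\breve\psi$ in $\breve\zeta\cdot(\text{natural image of }\operatorname{\normalfont A}_{g,p})$. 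Hence $\operatorname{\normalfont M}^{\text{alg}}_{g,0,p}$ is generated by the natural image of $\operatorname{\normalfont A}_{g,p}$ together with $\breve\zeta$, and the second assertion is then immediate from Corollary~\ref{cor:humpf}, which says the ADLH set generates $\operatorname{\normalfont A}_{g,p}$: its image generates the natural image of $\operatorname{\normalfont A}_{g,p}$, and adjoining $\breve\zeta$ gives a generating set of $\operatorname{\normalfont M}^{\text{alg}}_{g,0,p}$. I expect the one real difficulty to be the italicised step — producing a surjective lift $\psi^{\ast}$ — since this is the content of Nielsen's theorem \cite{Nielsen2} and the only point at which the group-theoretic machinery of Section~\ref{sec:Z1} is genuinely needed; everything else is bookkeeping with $H_1$ and with inner automorphisms.
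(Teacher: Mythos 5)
Your skeleton — lift a representative $\psi$ of $\breve\psi$ to an endomorphism $\psi^{\ast}$ of $F_{g,p}$, show the two normal closures coincide, invoke Magnus's theorem and Hopficity to get $V_0^{\psi^{\ast}}$ conjugate to $V_0^{\pm 1}$, then correct the sign with a lift of $\zeta$ and an inner automorphism — is reasonable bookkeeping, and your $H_1$ argument that an automorphism of $F_{g,p}$ fixing $V_0$ and permuting $\{[t_j],[\overline t_j]\}$ must lie in $\operatorname{\normalfont A}_{g,p}$ is correct. But the italicised step you defer, that the lift can be taken \emph{surjective}, is not something you can discharge by ``feeding the tuple into'' the machinery of Section~\ref{sec:Z1}: Theorem~\ref{thm:z} and Consequences~\ref{cons:z1},~\ref{cons:z2a} carry hypotheses an arbitrary lift simply does not satisfy — a length bound $\vert V_0^{\psi^{\ast}}\vert \le 4g{+}p$, the requirement that each $t_j^{\psi^{\ast}}$ be conjugate to some $t_{j^\pi}$ with \emph{positive} sign, and that the image $F_{g,p}^{\psi^{\ast}}$ be free of rank $2g{+}p$ — and none of these is available before the lift has already been normalized. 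Moreover your argument runs in the wrong logical order: you want surjectivity \emph{first}, in order to deduce via Magnus and Hopficity that $V_0^{\psi^{\ast}}$ is conjugate to $V_0^{\pm 1}$, but the only surjectivity proof in reach (Formanek's Fox-derivative computation, which is what the paper uses) needs as \emph{input} that the lift fixes $V_0$ on the nose: it differentiates the exact identity $\Pi [X,Y] = \Pi[x,y]$ in $F_{g,0}$, not a congruence modulo $\langle\langle V_0\rangle\rangle$. So the central claim of your proposal is assumed rather than proved, and it is precisely the content of Nielsen's theorem.

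For comparison, the paper treats $p\ge 1$ as a routine exercise and concentrates on $p=0$, where it fills the gap in two stages that are missing from your proposal: (i) a degree computation in $\operatorname{H}^2(F_{g,-1},\integers)\simeq\integers$ shows that if $t_1^{\tilde\phi}$ is written as a product of $n_+$ conjugates of $t_1$ and $n_-$ conjugates of $\overline t_1$ then $n_+ - n_- = \pm 1$, and van Kampen-diagram surgery alters the lift until $n_-=0$ or $n_+=0$, so that $t_1^{\tilde\phi}$ is conjugate to $t_1$ or $\overline t_1$; after an inner automorphism, and after composing with the lift $\tilde\zeta$ if needed, one gets $t_1^{\tilde\phi}=t_1$ exactly; (ii) only then does the Fox-derivative argument prove $\tilde\phi$ surjective, and Consequence~\ref{cons:z2a} upgrades it to an automorphism lying in $\operatorname{\normalfont A}_{g,0}$. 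Your Magnus-plus-Hopficity route is a legitimate substitute for stage (i) \emph{once surjectivity is known}, but without an independent proof of surjectivity — i.e.\ without something playing the role of stages (i) and (ii) in that order — the proposal begs the question at its key step. The closing reduction to the ADLH set via Corollary~\ref{cor:humpf} is fine.
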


\begin{proof}[Sketched proof] For $p \ge 1$, this is a straightforward exercise which we leave to the reader.
Thus we may assume that $p=0$.  We may further assume that $g \ge 1$.
The remaining  case is now a result of Nielsen~\cite{Nielsen2} for which
Zieschang has given an algebraic proof~\cite[Theorem~5.6.1]{ZVC}
developed from~\cite{Z64D, Z65H, Z66} along the following lines.

Let  $\phi \in \Aut F_{g,-1}$. We wish to show that  the element
$\breve \phi \in \Out F_{g,-1} = \operatorname{\normalfont M}^{\text{alg}}_{g,0,0}$
lies in the subgroup
 generated by the image of $\operatorname{\normalfont A}_{g,0} = \Stab(t_1, \Aut F_{g,0})$ together with $\breve \zeta$.
It is clear that
$\phi$  lifts back to an endomorphism $\tilde \phi$ of
$F_{g,0}$ such that $t_1^{\tilde \phi}$ lies in the normal closure of $t_1$.

Now $\operatorname{H}^2(F_{g,-1},\integers) \simeq \integers$;
see, for example,~\cite[Theorem~V.4.9]{DD}.
The image of $\phi$ under the natural map
 $\Aut F_{g,-1} \to \Aut \operatorname{H}^2(F_{g,-1},\integers) \simeq  \{1,-1\}$ is
 denoted $\deg(\phi)$.
By a co\-homology calculation, if we express $t_1^{\tilde \phi}$
as a product of $n_+$ conjugates of $t_1$ and  $n_-$ conjugates of~$\overline t_1$,
then $n_+-n_- = \deg(\phi) = \pm 1$.
By using van Kampen diagrams on a surface,  one can
alter $\tilde \phi$ and arrange that $n_-=0$ or $n_+=0$; this was also done in~\cite[Theorem~4.9]{DG}.
Thus $t_1^{\tilde \phi}$ is now a conjugate of $t_1$ or $\overline t_1$.
 By composing $\tilde \phi$ with an
inner automorphism of $F_{g,0}$, we may assume that $t_1^{\tilde \phi}$ is
$t_1$ or~$\overline t_1$.

Notice that $\zeta  $ lifts back to
$\tilde \zeta \in  \Aut F_{g,0}$ where,
for each   $i \in [1{\uparrow}g]$, $x_i^{\tilde \zeta} \coloneq  y_{g+1-i}$ and
$y_i^{\tilde \zeta} \coloneq   x_{g+1-i} $.  Then
$
\overline t_1^{\tilde \zeta} = (\Pi [x,y]_{\bbl1 1{\uparrow} g \bbr1 })^{\tilde \zeta} =
\Pi [y,x]_{\bbl1 g{\downarrow}1 \bbr1 } = t_1.
$  By replacing $\phi$ with $\phi \zeta$ if necessary, we may now assume that
  $t_1^{\tilde \phi} = t_1$.

We next prove a result, due to Nielsen~\cite{Nielsen0} for $g=1$, and Zieschang~\cite{Z64H} for $g \ge 1$,
that   $t_1^{\tilde \phi} = t_1$ implies that
$\tilde \phi$ is an automorphism of $F_{g,0}$.

We shall show first that $\tilde \phi$ is surjective, by an argument of Formanek~\cite[Theorem~V.4.11]{DD}.
Let $w$ be an element of the  basis $x_{[1 \uparrow g ]} \cup  y_{[1 \uparrow g ]}$ of $F_{g,0}$.
The map of sets \mbox{$ x_{[1 \uparrow g ]} \cup  y_{[1 \uparrow g ]} \to \GL_2(\integers F_{g,0})$,
$v \mapsto
\left( \begin{smallmatrix}
v && 0 \\
\delta_{v,w} && 1
\end{smallmatrix} \right) $} (where $\delta_{v,w}$ equals $1$ if $v = w$ and equals $0$
if   $v \ne w$)
extends uniquely to a group homomorphism
$$ F_{g,0} \to \GL_2(\integers F_{g,0}), \quad v \mapsto
\left( \begin{smallmatrix}
v\phantom{\partial w} && 0 \\
v^{\partial_w} && 1
\end{smallmatrix} \right).$$
The map  $\partial_{w} \colon F_{g,0} \to \integers F_{g,0}$,  called the
\textit{Fox derivative with respect to $w$},
satisfies, for all
 $u$, $v \in  F_{g,0}$,  $(uv)^{\partial_w} = (u^{\partial w})v +  v^{\partial_w} $.
On applying $\partial_w$ to $u \overline u = 1$, we see that $\overline u^{\partial_w} = -u^{\partial_w} \overline u$.
For each $i \in [1{\uparrow}g]$, let $X_i \coloneq x_i^{\tilde \phi}$ and $Y_i \coloneq y_i^{\tilde \phi}$.
Since $\tilde \phi$ fixes $\overline t_1 =  \Pi [x,y]_{\bbl1 1{\uparrow} g \bbr1 }$, we have
$
\Pi [X,Y]_{\bbl1 1{\uparrow} g \bbr1 }
=  \Pi [x,y]_{\bbl1 1{\uparrow} g \bbr1 }.
$
On applying $\partial_w$,   we obtain
\begin{align*}
 & \textstyle  \sum\limits_{i=1}^g  \biggl(\Bigl( X_i^{\partial_w}
\cdot Y_i \cdot (1 - \overline Y_i^{X_iY_i}) +  Y_i^{\partial_w}\cdot (1 - X_i^{Y_i})  \Bigr)
\cdot \Pi [X,Y]_{\bbl1 (i{+}1){\uparrow} g \bbr1 } \biggr)
\\ \nonumber   &\textstyle =   \sum\limits_{i=1}^g  \biggl(\Bigl( x_i^{\partial_w}
\cdot y_i \cdot (1 - \overline y_i^{x_iy_i}) +  y_i^{\partial_w}\cdot (1 - x_i^{y_i})  \Bigr)
\cdot \Pi [x,y]_{\bbl1 (i{+}1){\uparrow} g \bbr1 } \biggr).
\end{align*}
On applying the natural left $\integers F_{g,0}$-linear map $\integers F_{g,0} \to \integers[F_{g,0}/F_{g,0}^{\tilde \phi}]$,
denoted $f{\mapsto}fF_{g,0}^{\tilde \phi}$,  we obtain
\begin{align}
 \label{eq:cons2} & 0
 =        \textstyle\sum\limits_{i=1}^g  \biggl(\Bigl( x_i^{\partial_w}
\cdot y_i \cdot (1 - \overline y_i^{x_iy_i}) +  y_i^{\partial_w}\cdot (1 - x_i^{y_i})  \Bigr)
\cdot \Pi [x,y]_{\bbl1 (i{+}1){\uparrow} g \bbr1 } \biggr)F_{g,0}^{\tilde \phi}.
\end{align}

Consider any $i \in [1{\uparrow}g]$ such that
$x_{[(i{+}1){\uparrow} g]} \cup  y_{[(i{+}1){\uparrow} g]} \subseteq F_{g,0}^{\tilde \phi}$.
By taking $w = y_i$ in~\eqref{eq:cons2}, we obtain
$$0 =
  (1 - x_i^{y_i})
\cdot \Pi [x,y]_{\bbl1 (i{+}1){\uparrow} g \bbr1 } F_{g,0}^{\tilde \phi} = (1 - x_i^{y_i})  F_{g,0}^{\tilde \phi}.$$
Hence $x_i^{y_i} \in F_{g,0}^\phi$, that is, $x_i^{x_iy_i} \in F_{g,0}^\phi$.
By taking $w = x_i$ in~\eqref{eq:cons2} and left multiplying by $\overline y_i$, we obtain
$$0 =
   (1 - \overline y_i^{x_iy_i})
\cdot \Pi [x,y]_{\bbl1 (i{+}1){\uparrow} g \bbr1 } F_{g,0}^{\tilde \phi} = (1 - \overline y_i^{x_iy_i})F_{g,0}^{\tilde \phi}.$$
Hence, $\overline y_i^{x_iy_i} \in  F_{g,p}^\phi$. It follows that $x_i$, $y_i \in F_{g,0}^{\tilde \phi}$.

By induction, $x_{[1{\uparrow} g]} \cup  y_{[1{\uparrow} g]} \subseteq F_{g,0}^{\tilde \phi}$.  Thus
 ${\tilde \phi}$ is
surjective.

 By  Consequence~\ref{cons:z2a}, $\tilde \phi$ is an automorphism, as desired.
\end{proof}

Recall that   $\mathbf{S}_{g,0,p}$ was constructed in Definitions~\ref{defs:bound0} as the
quotient space obtained from   $\mathbf{S}_{g,1,p}$
 by   collapsing  the boundary component to a point.   We then
have  a natural embedding of $\Aut \mathbf{S}_{g,1,p}$ in
$\Aut \mathbf{S}_{g,0,p} $.  Thus   the  Dehn twists and  braid twists  of
$ \mathbf{S}_{g,1,p}$ constructed in Section~\ref{sec:boundary}
induce Dehn twists and  braid twists  of~$\mathbf{S}_{g,0,p}$.
It follows that the image of the ADL set in
$\operatorname{\normalfont M}^{\text{alg}}_{g,0,p}$ lies in $\operatorname{\normalfont M}^{\text{top}}_{g,0,p}$.
Also,   $\breve \zeta$ lies in $\operatorname{\normalfont M}^{\text{top}}_{g,0,p}$, since  $\breve \zeta$
is easily seen to arise from a reflection of $\mathbf{S}_{g,0,p}$.
We now see, in the manner
proposed by  Magnus, Karrass and Solitar~\cite[p.175]{MKS},
 that  the homomorphism $\operatorname{\normalfont M}^{\text{top}}_{g,0,p}
 \to \operatorname{\normalfont M}^{\text{alg}}_{g,0,p}$ is surjective,
by Theorem~\ref{thm:niel}.
 Assuming Epstein's result~\cite{Epstein}, if \mbox{$(g,p) \ne (0,0), (0,1)$},
then $\operatorname{\normalfont M}^{\text{top}}_{g,0,p}$
equals $\operatorname{\normalfont M}^{\text{alg}}_{g,0,p}$, and both are
generated by
the image of the ADLH  set
together with $\breve \zeta$; see~\cite[Corollary~2.11(ii)]{LP}.

\bigskip

 \centerline{\textsc{ {Acknowledgments}}}

\medskip
The research of both authors  was jointly
funded by the MEC (Spain) and the EFRD~(EU) through
Projects  MTM2006-13544 and MTM2008-01550.

We are greatly indebted to  Gilbert Levitt,  Jim McCool, and Luis Paris for
very useful remarks in correspondence and conversations.

\bigskip

\medskip

\noindent \textsc{Llu\'{\i}s Bacardit,
Departament de  Matem\`atiques,
Universitat  Aut\`o\-noma de Bar\-ce\-lo\-na,
E-08193 Bellaterra (Barcelona), Spain
}

\noindent \emph{E-mail address}{:\;\;}\url{lluisbc@mat.uab.cat}

\medskip

\noindent \textsc{Warren Dicks,
Departament de  Matem\`atiques,
Universitat Aut\`o\-noma de Bar\-ce\-lo\-na,
E-08193 Bellaterra (Barcelona), Spain}

\noindent \emph{E-mail address}{:\;\;}\url{dicks@mat.uab.cat}

\noindent \emph{URL}{:\;\;}\url{http://mat.uab.cat/~dicks/}

\end{document}